\newtheorem{theorem}{Theorem}
\numberwithin{theorem}{section}
\newtheorem{proposition}[theorem]{Proposition}
\newtheorem{lemma}[theorem]{Lemma}
\newtheorem{question}[theorem]{Question}
\theoremstyle{definition}
\newtheorem{definition}[theorem]{Definition}
\newtheorem{remark}[theorem]{Remark}
\newtheorem{example}[theorem]{Example}
\newcommand{\Cc}{{\mathbb C}}
\newcommand{\Rr}{{\mathbb R}}
\newcommand{\Zz}{{\mathbb Z}}
\newcommand{\ct}{\mathcal{T}}
\newcommand{\cl}{\mathcal{L}}
\newcommand{\cx}{\mathcal{X}}
\renewcommand{\v}{\mathfrak{v}}
\newcommand{\w}{\mathfrak{w}}
\newcommand{\st}{~|~}					
\newcommand{\B}{\mathbb{B}}
\DeclareMathOperator{\Spec}{Spec}  
\DeclareMathOperator{\Proj}{Proj}
\DeclareMathOperator{\Hom}{Hom}
\DeclareMathOperator{\Cl}{Cl}
\DeclareMathOperator{\SL}{SL}
\title{A Fano compactification of the $\SL_2(\Cc)$ free group character variety}
\author{Joseph Cummings and Christopher Manon}
\begin{document}

\maketitle

\begin{abstract}
We show that a certain compactification $\mathfrak{X}_g$ of the $\SL_2(\Cc)$ free group character variety $\cx(F_g, \SL_2(\Cc))$ is Fano.  This compactification has been studied previously by the second author, and separately by Biswas, Lawton, and Ramras.  Part of the proof of this result involves the construction of a large family of integral reflexive polytopes.\end{abstract}

\tableofcontents

\section{Introduction}

Let $\SL_2(\Cc)$ denote the group of $2\times 2$ complex matrices with determinant 1, and let $F_g$ denote the free group on $g$ letters. The \emph{character variety} $\cx(F_g, \SL_2(\Cc))$ is the moduli space of representations $\rho: F_g \to \SL_2(\Cc)$. Character varieties emerge naturally as moduli of local systems on a punctured Riemann surface and as generalizations of Teichm\"uller space \cite{Goldman}, \cite{Simpson1}, \cite{Simpson2}, \cite{Vogt}.  In this paper we study a compactification $\mathfrak{X}_g$ of $\cx(F_g, \SL_2(\Cc))$, of a form constructed by the second author in \cite{Manon-Outer}, and essentially by Biswas, Lawton, and Ramras \cite{BLR}.  The following is our main theorem. 

\begin{theorem}\label{thm-main}
The boundary $\mathfrak{X}_g \setminus \cx(F_g, \SL_2(\Cc))$ is the union of $g$ irreducible divisors $D_i$ $1 \leq i \leq g$, and the anti-canonical class of $\mathfrak{X}_g$ is $3(\sum_{i = 1}^g D_i)$.  The divisor $\sum_{i =1}^g D_i$ is very ample, in particular $\mathfrak{X}_g$ is Fano. 
\end{theorem}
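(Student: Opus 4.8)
The plan is to prove the three assertions together by degenerating $\mathfrak{X}_g$ flatly to a projective toric variety coming from a reflexive polytope, checking the Fano and very-ampleness statements on that toric model, and transferring them back. Recall from \cite{Manon-Outer} and \cite{BLR} that $\mathfrak{X}_g = \Proj R_g$ for a finitely generated graded domain $R_g$ built from a trivalent graph $\Gamma$ of first Betti number $g$, that $\cx(F_g,\SL_2(\Cc))$ is the affine open $\mathfrak{X}_g \sm V(s)$ for a distinguished section $s$ of $\mathcal{O}(1)$ whose divisor is $\sum_{i=1}^{g} D_i$, and that the $D_i$ correspond to the independent cycles of $\Gamma$. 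The first step is the toric degeneration: using the valuation on $R_g$ furnished by the combinatorics of $\Gamma$ (a Khovanskii/SAGBI-basis argument), $R_g$ degenerates flatly to the affine semigroup algebra $\Cc[S_\Gamma]$, whose degree-$d$ piece is $dP_\Gamma \cap M$ for a lattice polytope $P_\Gamma \subset M_\Rr \isom \Rr^{3g-3}$ cut out by the ``pair of pants'' triangle inequalities at the vertices of $\Gamma$ together with the edge-capacity and parity conditions. Thus $\mathfrak{X}_g$ degenerates to the polarized toric variety $(Y_{P_\Gamma}, \mathcal{O}(1))$, whose toric boundary has one component $D_F$ per facet $F$ of $P_\Gamma$, and the $D_i$ specialize to explicit sums of such facet divisors.

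The heart of the proof — and the step I expect to be the main obstacle — is to show that $P_\Gamma$, after translating its unique interior lattice point to the origin, is \emph{reflexive}: it is a full-dimensional lattice polytope with exactly one interior lattice point, each of whose facets lies at lattice distance one from that point, equivalently whose polar dual $P_\Gamma^\circ$ is again a lattice polytope. I would do this by writing out the supporting halfspaces of $P_\Gamma$ explicitly in terms of the vertices and edges of $\Gamma$, checking that each facet normal is primitive, and verifying the height-one condition; the parity conditions are exactly what force this, and the genuine difficulty is carrying it out uniformly over the whole family of graphs $\Gamma$ (and checking that the edge contractions/expansions relating different $\Gamma$ with the same $g$ preserve reflexivity). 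This is the promised family of integral reflexive polytopes. In the same analysis I would record that $P_\Gamma$ has the integer decomposition property, so that $\Cc[S_\Gamma]$, and hence $R_g$ (generation in degree one is inherited through a flat degeneration), is generated in degree one; consequently $\mathcal{O}(1)$ is very ample on $\mathfrak{X}_g$.

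Granting reflexivity, $Y_{P_\Gamma}$ is Gorenstein Fano with $-K_{Y_{P_\Gamma}} \isom \mathcal{O}(1)$ ample. Since ampleness is open in flat families and the total space of the degeneration is Gorenstein with relative dualizing sheaf restricting to the dualizing sheaf of each fibre, $\mathfrak{X}_g$ is Gorenstein with $-K_{\mathfrak{X}_g} \isom \mathcal{O}(1)$ ample, and $\mathcal{O}(1)$ is very ample by the previous paragraph; so $\mathfrak{X}_g$ is Fano once $\mathcal{O}(1)$ is identified with $\sum_{i=1}^{g} D_i$. As for the boundary, $\cx(F_g,\SL_2(\Cc))$ is affine, so its complement in the normal projective variety $\mathfrak{X}_g$ is pure of codimension one, and directly from the construction it has exactly $g$ irreducible components, namely the $D_i$ indexed by the cycles of $\Gamma$ (pinching a cycle sends a single trace coordinate to infinity).

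Finally, to obtain the precise identity $-K_{\mathfrak{X}_g} = 3\sum_{i=1}^{g} D_i$ it suffices to verify it after passing to the toric special fibre, where $-K_{Y_{P_\Gamma}} = \sum_F D_F$; one then uses the linear equivalences among the facet divisors of $P_\Gamma$ — themselves dictated by the combinatorics of $\Gamma$ — to reorganize $\sum_F D_F$ as three times the sum of the specializations of the $D_i$. As a guide to this bookkeeping, and as the geometric source of the coefficient, one can localize near a single boundary divisor, where $\mathfrak{X}_g$ looks like the open character variety times (a neighbourhood of the boundary in) the compactification $\overline{\SL_2(\Cc)} = \{ad - bc = w^2\} \subset \Pp^4$, a quadric threefold whose anticanonical class is three times its boundary hyperplane section. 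Combining the pieces gives $-K_{\mathfrak{X}_g} = 3\sum_{i} D_i$ with $\sum_{i} D_i = \mathcal{O}(1)$ very ample, hence $\mathfrak{X}_g$ Fano.
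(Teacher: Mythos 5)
Your overall architecture is the same as the paper's: degenerate $\mathfrak{X}_g$ (via the graph valuation and its adapted basis of spin diagrams) to a polarized toric variety, show the relevant polytope is a translate of a reflexive polytope with the integer decomposition property, conclude the special fiber is Gorenstein Fano with anticanonical class three times the boundary class, and transfer back using flatness, Cohen--Macaulayness of the total space (so the relative dualizing sheaf specializes fiberwise), and openness of ampleness in a proper flat family. Your boundary count ($g$ irreducible divisors, one per loop of the collapsed graph), the identification $-K_{\mathfrak{X}_g}=3\sum D_i$ read off the toric fiber, and the quadric-threefold heuristic for the coefficient $3$ all agree with what the paper does.

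The genuine gap is in the step you yourself flag as the heart of the matter: reflexivity and the IDP are only promised, and the route you sketch for them would fail as stated. You propose to verify reflexivity ``uniformly over the whole family of graphs $\Gamma$'' and to check that the edge contractions/expansions relating different genus-$g$ graphs preserve it; but reflexivity is very sensitive to the pair $(\Gamma,\ct)$, and no such uniform statement is true. The paper exhibits $K_4$ with one spanning tree giving a reflexive polytope and with another spanning tree not (a vertex violates the parity/lattice condition), proves a general obstruction (Proposition \ref{prop: reflexive obstruction}), and reports failures for $K_{3,3}$ and the Petersen graph; moreover the contraction step is exactly where things break for general pairs, since contracting an edge of $F(\Gamma,\ct)$ introduces extra boundary conditions. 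What the theorem actually requires is a single good pair per genus: take $\Gamma$ to be a trivalent tree $\ct$ with a loop attached at every leaf, and prove reflexivity of $Q(\Gamma,\ct)$ by induction on $g$, using that the facets $\{w(e)=0\}$ for tree edges $e$ split as products $3P(\Gamma_1,\ct_1)\times 3P(\Gamma_2,\ct_2)$ (Lemma \ref{lem: faces}), together with a separate parity analysis of the vertices with no vanishing coordinate (Theorem \ref{thm:reflexive}). Likewise the IDP is not automatic from the facet description: the paper deduces it (Theorem \ref{thm-normality}) from normality of an auxiliary tree polytope $\Delta(\ct)$ and an explicit lifting of decompositions across the loops. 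Without restricting to this class of graphs and supplying these inductive and lifting arguments, your key lemma is unproved (and, in the generality you state it, false); with them, the rest of your outline goes through essentially as in the paper.
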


The proof of Theorem \ref{thm-main} is by \emph{toric degeneration}, in particular we show that $\mathfrak{X}_g$ is Fano by finding a degeneration to a Fano toric variety. For each choice of a certain type of graph $\Gamma$ with edge set $E(\Gamma)$ and a spanning tree $\ct$ (see Section \ref{sec-polytopes}), we show that there is a \emph{toric flat family} $\underline{\pi_\Gamma}: \overline{X_\Gamma} \to \Cc^{E(\Gamma)}$ with general fiber $\mathfrak{X}_g$ and special fiber a Fano toric variety $\overline{Y_\Gamma}$. For an introduction to toric flat families see \cite{Kaveh-Manon-TVB}.  The toric variety $\overline{Y_\Gamma}$ is the projective toric variety associated to a polytope $P(\Gamma, \ct) \subset \Rr^{E(\Gamma)}$.  The following is then a critical ingredient in the proof of Theorem \ref{thm-main}.

\begin{theorem}\label{thm-mainpolytope}
For each choice of $\Gamma$, $\ct$ as above, the polytope $P(\Gamma, \ct)$ is normal, and its third Minkowski sum $3P(\Gamma, \ct)$ is (an integral translate of) an integral reflexive polytope. 
\end{theorem}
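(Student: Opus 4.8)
The plan is to work from the explicit inequality description of $P(\Gamma,\ct)$ supplied in Section~\ref{sec-polytopes}, treating normality and reflexivity separately. Both rest on one structural feature of the construction: $P(\Gamma,\ct)$ is a toric fibre product, over the coordinate segments attached to the edges of $\Gamma$, of a simple building block attached to each vertex of $\Gamma$ -- a low-dimensional lattice polytope that is Gorenstein of index $3$ and carries a unimodular triangulation, with the spanning tree $\ct$ entering by breaking the symmetry of this block at each vertex (distinguishing the tree edges from the $g$ non-tree edges). On the level of the compactification, each such block is the toric shadow of a Fano threefold of index $3$ -- a quadric in $\Pp^4$, inside which a copy of $\SL_2(\Cc)$ is being compactified -- and the factor ``$3$'' we are ultimately after is $-K/H$ for that threefold. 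The strategy is to verify each assertion for the building block and then to verify that the toric fibre product preserves it.

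\emph{Normality.} It is enough to produce a regular unimodular triangulation of $P(\Gamma,\ct)$, since a lattice polytope with a unimodular triangulation has the integer decomposition property. The building block has an evident unimodular triangulation, so the content is the standard fact that a toric fibre product of polytopes carrying \emph{compatible} regular unimodular triangulations again carries one; here compatibility means that the triangulations of the blocks at two adjacent vertices restrict to the same subdivision of the common edge segment, which one arranges uniformly by pulling with respect to a single global term order. Alternatively one can run this as an induction over $\Gamma$, with the small base graphs -- the theta graph and the dumbbell -- dispatched by direct enumeration.

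\emph{Reflexivity of $3P(\Gamma,\ct)$.} Here I would exhibit the interior lattice point $q$ of $3P(\Gamma,\ct)$ explicitly -- it is the point $(1,\dots,1)\in\Zz^{E(\Gamma)}$, the lattice incarnation of the Gorenstein centre $\tfrac{1}{3}(1,\dots,1)$ of $P(\Gamma,\ct)$ already visible on the building block -- and then verify, facet by facet, that $3P(\Gamma,\ct)-q$ is cut out by inequalities with primitive inner normals and right-hand side $1$; equivalently, that every facet of $P(\Gamma,\ct)$ lies at lattice distance exactly $\tfrac{1}{3}$ from $q/3$. This is a direct computation once the irredundant list of facet inequalities of $P(\Gamma,\ct)$ -- the nonnegativity inequalities, the inequalities imposed at the trivalent vertices, and the spanning-tree bounds -- is in hand. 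Uniqueness of $q$ among interior lattice points then comes for free -- if every facet of $3P(\Gamma,\ct)-q$ sits at lattice distance $1$ from the origin, any second interior lattice point would lie in its (trivial) recession cone -- and ``integral translate'' is precisely the passage $3P(\Gamma,\ct)\mapsto 3P(\Gamma,\ct)-q$.

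The main obstacle, as I see it, is the combinatorics underpinning the reflexivity computation, in two linked respects. First, one must pin down the exact irredundant facet description of $P(\Gamma,\ct)$: with loop edges present and the spanning-tree bounds imposed, a substantial fraction of the obvious inequalities -- nonnegativity at a loop edge, and various of the vertex inequalities -- become redundant, and whether a given facet sits at lattice distance $\tfrac{1}{3}$ from the centre is sensitive to exactly which ones survive. Second, one must show this distance is uniform across all facets and all pairs $(\Gamma,\ct)$, a property a generic graph polytope does not enjoy, which is where the particular shape of the building block is used and which is the heart of the matter. By comparison the normality statement should be routine once the gluing lemma for regular unimodular triangulations is set up carefully, and if a usable normality result can be extracted from the earlier work cited in the introduction, essentially all the remaining work lies in the reflexivity statement.
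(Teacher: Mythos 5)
Your reflexivity argument is anchored at the wrong point and, implicitly, the wrong lattice, and this is not a cosmetic slip. The correct translate is by $\mathbf{2}=(2,\dots,2)$, not $q=(1,\dots,1)$: against the boundary facets $w(\ell)\le 3$ of $3P(\Gamma,\ct)$ your $q$ sits at lattice distance $2$ (the normal $-b_\ell$ is primitive in both $\Zz^{E(\Gamma)}$ and $N_\Gamma$, and $3-1=2$), and $(1,\dots,1)$ is not even a point of the relevant lattice $M_\Gamma$, since the sum of its entries over the edges at any vertex is odd. Moreover the reflexivity only holds with respect to $M_\Gamma$ (with dual $N_\Gamma$, in which the halved triangle normals are primitive); with respect to $\Zz^{E(\Gamma)}$ no translate of $3P(\Gamma,\ct)$ is reflexive at all --- already for the dumbbell the facets $w(e)\ge 0$, $2w(\ell_1)\ge w(e)$, $w(\ell_1)\le 3$ would force a centre with $q(e)=1$, $q(\ell_1)=1$ and $q(\ell_1)=2$ simultaneously. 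More importantly, you have located the difficulty in the wrong place. Once one translates by $\mathbf{2}$ and works in $M_\Gamma$, every facet inequality has right-hand side $-1$ essentially by construction (this is literally the definition of $Q(\Gamma,\ct)$ in the paper), and no irredundancy analysis is needed; the genuine content is that every \emph{vertex} of $3P(\Gamma,\ct)$ lies in $M_\Gamma$. That is where the hypothesis that $F(\Gamma,\ct)$ consists of $g$ loops enters: the paper proves it by induction on the genus, using the identification of the faces $\{w(e)=0\}$, $e\in E(\ct)$, with products $3P(\Gamma_1,\ct_1)\times 3P(\Gamma_2,\ct_2)$ (Lemma \ref{lem: faces}) together with a parity-propagation case analysis at vertices where few inequalities are tight (Theorem \ref{thm:reflexive}). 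And it is exactly vertex integrality that fails for other pairs $(\Gamma,\ct)$ --- see the non-lattice vertex in Example \ref{ex: K4} and Proposition \ref{prop: reflexive obstruction} --- even though those polytopes satisfy the same facet inequalities with the same right-hand sides. A proof organized around uniform facet distances from a centre cannot distinguish the good pairs from the bad ones, so it cannot carry the theorem; your plan never engages the vertex-integrality step (you take it ``for free'' from normality, which itself is not established, see below).

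On normality your route is genuinely different from the paper's, which restricts a lattice point of $L\cdot P(\Gamma,\ct)$ to the spanning tree, quotes the known normality of the tree polytope $\Delta(\ct)$ with respect to the even lattice (Lemma \ref{lem-deltanormal}, citing the earlier work on tree/parabolic-bundle polytopes), and then extends the $L$ summands across the loops by an explicit counting argument (Proposition \ref{prop-IDP}). Your alternative rests on the asserted ``standard fact'' that a toric fibre product, over shared edge coordinates, of blocks carrying compatible regular unimodular triangulations again carries one; as stated this is not a standard result and is the hardest step of your plan. Note also that the blocks at internal vertices of $\ct$ are unbounded cones rather than Gorenstein polytopes (boundedness of $P(\Gamma,\ct)$ is itself Lemma \ref{lemma-isPolytope}), that unimodularity must be measured in $M_\Gamma$, and that toric-fibre-product theorems in the literature require the shared grading to be by linearly independent degrees, which fails here because a shared tree-edge coordinate takes several collinear values; finally, a unimodular triangulation is strictly stronger than the integer decomposition property you actually need. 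So the normality half is a plausible programme whose key lemma is unproved, while the reflexivity half as written contains an outright error (wrong centre, wrong lattice) and omits the vertex-integrality argument that is the heart of the paper's proof.
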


A broader class of graphs $\Gamma$ than those used in Theorem \ref{thm-mainpolytope} can be used to find toric degenerations of $\mathfrak{X}_g$, however it is not always the case that the corresponding polytopes are normal or reflexive.  In Section \ref{sec-polytopes} we prove several combinatorial results along these lines. 

We mention a conjecture formulated by Simpson \cite{Simpson} that a relative character variety should have a log Calabi-Yau compactification. In \cite{Whang}, Whang verifies this for relative $\SL_2(\Cc)$ character varieties of punctured surfaces by constructing a compactification which, while different from $\mathfrak{X}_g$, has a similar construction. Both compactifications can be viewed as solutions to certain inequalities on the values of the valuations on the coordinate ring of $\mathcal{X}(F_g, \SL_2(\mathbb{C}))$ constructed in \cite{Manon-Outer}. In principle, the techniques in Section \ref{sec-degeneration} can also be used to construct a toric degeneration of Whang's compactification.  We expect that the elegant combinatorial constructions found by Whang in \cite[Section 6]{Whang} have a polyhedral explanation, and will likewise have analogues in our setting due to the reflexivity of $3P(\Gamma, \ct)$.

The compactification $\mathfrak{X}_g$ is from a family of such compactifications constructed by the second author in \cite{Manon-Outer}.  A similar construction is explored for all simple groups $G$ of adjoint type by Biswas, Lawton, and Ramras in \cite{BLR}, see also \cite[Sections 4 and 5]{Manon-Outer}.  Their construction relies on the use of the \emph{wonderful compactification} $G \subset \overline{G}$ in the role of the compactification $\SL_2(\Cc) \subset X$ from Section \ref{sec-compactification}.  Roughly speaking, by extending $\SL_2(\Cc)$ to $X$, we allow the values of the generators of $F_g$ to take values in the (equivalence classes of) singular matrices.  Similarly, Biswas, Lawton, and Ramras allow the generators of $F_g$ to take values in the wonderful compactification of a more general group $G$.  We expect that the techniques used in this paper can be generalized to show compactifications like those in \cite{BLR} are Fano.  

\begin{example}
The polytopes in Figure \ref{fig: genus 2 polytopes} correspond to the anti-canonical embedding of the special fiber $\overline{Y_\Gamma}$ for each genus 2 trivalent graph $\Gamma$.
\end{example}

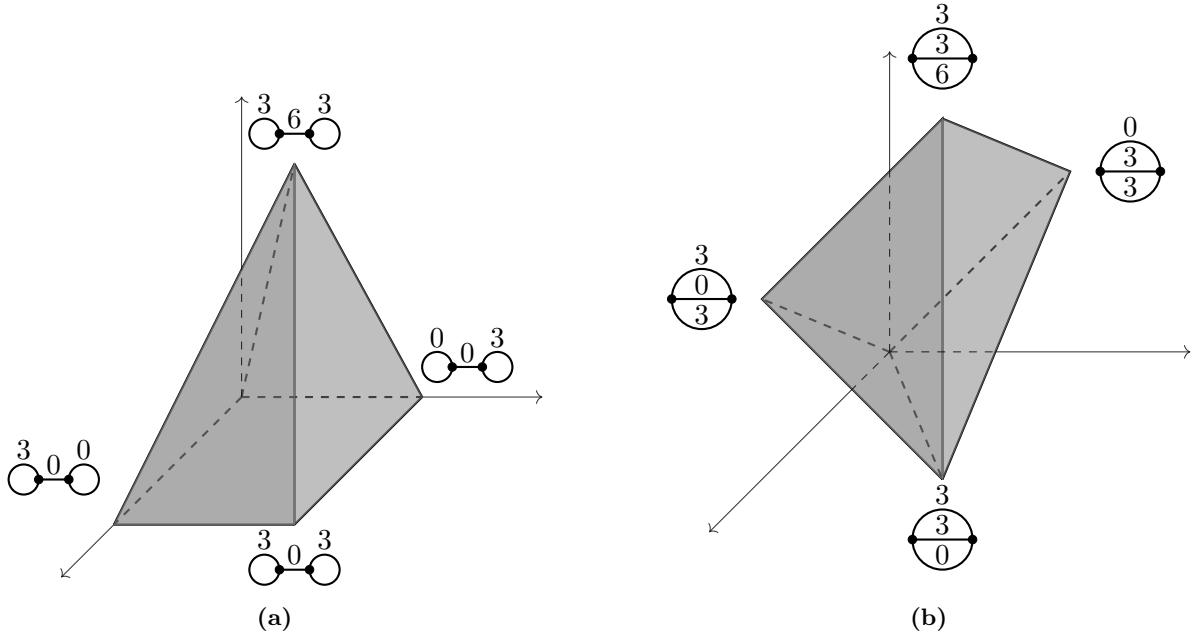
\begin{figure}
    \centering
    \subfloat[\label{subfig:dumbell polytope}]{
    \begin{tikzpicture}[scale = 0.8]
    \coordinate (V1) at (0,0);
        
    \coordinate (V2) at ({-3*sqrt(1/2)},{-3*sqrt(1/2)});
        \coordinate (A2) at ({-3*sqrt(1/2)-1.25},{-3*sqrt(1/2) + 0.75});
        \coordinate (B2) at ({-3*sqrt(1/2)-0.75},{-3*sqrt(1/2) + 0.75});
        \coordinate (MA2) at ({-3*sqrt(1/2)-1.5},{-3*sqrt(1/2) + 0.75});
        \coordinate (MB2) at ({-3*sqrt(1/2)-0.5},{-3*sqrt(1/2) + 0.75});
        \draw[thick] (A2) -- (B2);
        \draw[thick] (MA2) circle [radius = 0.25];
        \draw[thick] (MB2) circle [radius = 0.25];
        \draw[fill] (A2) circle [radius = 0.07];
        \draw[fill] (B2) circle [radius = 0.07];
        \node[label={90:$3$}] at (MA2) {};
        \node[label={90:$0$}] at ({-3*sqrt(1/2)-1},{-3*sqrt(1/2) + 0.5}) {};
        \node[label={90:$0$}] at (MB2) {};
        
    \coordinate (V3) at (3,0);
        \coordinate (A3) at (3.5,0.5);
        \coordinate (B3) at (4,0.5);
        \coordinate (MA3) at (3.25,0.5);
        \coordinate (MB3) at (4.25,0.5);
        \draw[thick] (A3) -- (B3);
        \draw[thick] (MA3) circle [radius = 0.25];
        \draw[thick] (MB3) circle [radius = 0.25];
        \draw[fill] (A3) circle [radius = 0.07];
        \draw[fill] (B3) circle [radius = 0.07];
        \node[label={90:$0$}] at (MA3) {};
        \node[label={90:$0$}] at (3.75,0.25) {};
        \node[label={90:$3$}] at (MB3) {};
        
    \coordinate (V4) at ({-3*sqrt(1/2) + 3}, {-3*sqrt(1/2)}); 
        \coordinate (A4) at ({-3*sqrt(1/2) + 2.75}, {-3*sqrt(1/2) - 0.75});
        \coordinate (B4) at ({-3*sqrt(1/2) + 3.25}, {-3*sqrt(1/2) - 0.75});
        \coordinate (MA4) at ({-3*sqrt(1/2) + 2.5}, {-3*sqrt(1/2) - 0.75}); 
        \coordinate (MB4) at ({-3*sqrt(1/2) + 3.5}, {-3*sqrt(1/2) - 0.75});
        \draw[thick] (A4) -- (B4);
        \draw[thick] (MA4) circle [radius = 0.25];
        \draw[thick] (MB4) circle [radius = 0.25];
        \draw[fill] (A4) circle [radius = 0.07];
        \draw[fill] (B4) circle [radius = 0.07];
        \node[label={90:$3$}] at (MA4) {};
        \node[label={90:$0$}] at ({-3*sqrt(1/2) + 3}, {-3*sqrt(1/2) - 1}) {};
        \node[label={90:$3$}] at (MB4) {};
        
    \coordinate (V5) at ({-3*sqrt(1/2) + 3},{-3*sqrt(1/2) + 6});
        \coordinate (A5) at ({-3*sqrt(1/2) + 2.75},{-3*sqrt(1/2) + 6.5});
        \coordinate (B5) at ({-3*sqrt(1/2) + 3.25},{-3*sqrt(1/2) + 6.5});
        \coordinate (MA5) at ({-3*sqrt(1/2) + 2.5},{-3*sqrt(1/2) + 6.5}); 
        \coordinate (MB5) at ({-3*sqrt(1/2) + 3.5},{-3*sqrt(1/2) + 6.5});
        \draw[thick] (A5) -- (B5);
        \draw[thick] (MA5) circle [radius = 0.25];
        \draw[thick] (MB5) circle [radius = 0.25];
        \draw[fill] (A5) circle [radius = 0.07];
        \draw[fill] (B5) circle [radius = 0.07];
        \node[label={90:$3$}] at (MA5) {};
        \node[label={90:$6$}] at ({-3*sqrt(1/2) + 3},{-3*sqrt(1/2) + 6.25}) {};
        \node[label={90:$3$}] at (MB5) {};

    \draw[thick, dashed] (V1) -- (V2);
    \draw[thick, dashed] (V1) -- (V3);
    \draw[thick] (V2) -- (V4) -- (V3);
    \draw[thick, dashed] (V1) -- (V5);
    \draw[thick] (V2) -- (V5);
    \draw[thick] (V3) -- (V5);
    \draw[thick] (V4) -- (V5);
    
    \draw[fill, color = gray, opacity = 0.65] (V2) -- (V4) -- (V5) -- (V2);
    \draw[fill, color = gray, opacity = 0.5] (V3) -- (V4) -- (V5) -- (V3);
    
    \draw[->, draw opacity = 0.8] (V2) -- (-3,-3);
    \draw[->, draw opacity = 0.8] (V3) -- (5,0);
    \draw[draw opacity = 0.8, dashed] (V1) -- (0,{3/sqrt(2)});
    \draw[->, draw opacity = 0.8] (0,{3/sqrt(2)}) -- (0,5);
    
\end{tikzpicture}
    } 
    \hfill \subfloat[\label{subfig:beachball polytope}]{
    \begin{tikzpicture}[scale = 0.8]
    \coordinate (V1) at (0,0);
    
    \coordinate (V2) at ({-3/sqrt(2) + 3}, {-3/sqrt(2)});
        \coordinate (A2) at ({-3/sqrt(2) + 2.5}, {-3/sqrt(2) - 1});
        \coordinate (B2) at ({-3/sqrt(2) + 3.5}, {-3/sqrt(2) - 1});
        \coordinate (M2) at ({-3/sqrt(2) + 3}, {-3/sqrt(2) - 1});
        \draw[thick] (M2) circle [radius = 0.5];
        \draw[thick] (A2) -- (B2);
        \draw[fill] (A2) circle [radius = 0.07];
        \draw[fill] (B2) circle [radius = 0.07];
        \node[label={90:$0$}] at ({-3/sqrt(2) + 3}, {-3/sqrt(2) - 1.75}) {};
        \node[label={90:$3$}] at ({-3/sqrt(2) + 3}, {-3/sqrt(2) - 0.75}) {};
        \node[label={90:$3$}] at ({-3/sqrt(2) + 3}, {-3/sqrt(2) - 1.25}) {};
        
    \coordinate (V3) at ({-3/sqrt(2)},{-3/sqrt(2) + 3});
        \coordinate (A3) at ({-3/sqrt(2)-1.5},{-3/sqrt(2) + 3});
        \coordinate (B3) at ({-3/sqrt(2)-0.5},{-3/sqrt(2) + 3});
        \coordinate (M3) at ({-3/sqrt(2)-1},{-3/sqrt(2) + 3});
        \draw[thick] (M3) circle [radius = 0.5];
        \draw[thick] (A3) -- (B3);
        \draw[fill] (A3) circle [radius = 0.07];
        \draw[fill] (B3) circle [radius = 0.07];
        \node[label={90:$3$}] at ({-3/sqrt(2)-1},{-3/sqrt(2) + 3.25}) {};
        \node[label={90:$0$}] at ({-3/sqrt(2)-1},{-3/sqrt(2) + 2.75}) {};
        \node[label={90:$3$}] at ({-3/sqrt(2)-1},{-3/sqrt(2) + 2.25}) {};
    
    \coordinate (V4) at (3,3);
        \coordinate (A4) at (3.5,3);
        \coordinate (B4) at (4.5,3);
        \coordinate (M4) at (4,3);
        \draw[thick] (M4) circle [radius = 0.5];
        \draw[thick] (A4) -- (B4);
        \draw[fill] (A4) circle [radius = 0.07];
        \draw[fill] (B4) circle [radius = 0.07];
        \node[label={90:$0$}] at (4,3.25) {};
        \node[label={90:$3$}] at (4,2.75) {};
        \node[label={90:$3$}] at (4,2.25) {};
        
    \coordinate (V5) at ({-3/sqrt(2) + 3}, {-3/sqrt(2) + 6});
        \coordinate (A5) at ({-3/sqrt(2) + 2.5}, {-3/sqrt(2) + 7});
        \coordinate (B5) at ({-3/sqrt(2) + 3.5}, {-3/sqrt(2) + 7});
        \coordinate (M5) at ({-3/sqrt(2) + 3}, {-3/sqrt(2) + 7});
        \draw[thick] (M5) circle [radius = 0.5];
        \draw[thick] (A5) -- (B5);
        \draw[fill] (A5) circle [radius = 0.07];
        \draw[fill] (B5) circle [radius = 0.07];
        \node[label={90:$3$}] at ({-3/sqrt(2) + 3}, {-3/sqrt(2) + 7.25}) {};
        \node[label={90:$3$}] at ({-3/sqrt(2) + 3}, {-3/sqrt(2) + 6.75}) {};
        \node[label={90:$6$}] at ({-3/sqrt(2) + 3}, {-3/sqrt(2) + 6.25}) {};
    
    \draw[thick, dashed] (V1) -- (V2);
    \draw[thick, dashed] (V1) -- (V3);
    \draw[thick, dashed] (V1) -- (V4);
    \draw[thick] (V2) -- (V5) -- (V3) -- (V2);
    \draw[thick] (V2) -- (V4) -- (V5);
    
    \draw[fill, color = gray, opacity = 0.65] (V2) -- (V3) -- (V5) -- (V2);
    \draw[fill, color = gray, opacity = 0.5] (V2) -- (V5) -- (V4) -- (V2);
    
    \draw[dashed, draw opacity = 0.8] (V1) -- ({-3/sqrt(2) + 3/2},{-3/sqrt(2) + 3/2});
    \draw[->, draw opacity = 0.8] ({-3/sqrt(2) + 3/2},{-3/sqrt(2) + 3/2}) -- (-3,-3);
    \draw[dashed, draw opacity = 0.8] (V1) -- ({3 - 3/(sqrt(2)+1)},0);
    \draw[->, draw opacity = 0.8] ({3 - 3/(sqrt(2)+1)},0) -- (5,0);
    \draw[draw opacity = 0.8, dashed] (V1) -- (0,3);
    \draw[->, draw opacity = 0.8] (0,3) -- (0,5);
\end{tikzpicture}
    } 
    \caption{The polytopes $3P(\Gamma,\ct)$ for the two genus 2 graphs. Each is reflexive after translating by $(-2,-2,-2)$. The vertices are labelled with the corresponding labelled graphs.}
    \label{fig: genus 2 polytopes}
\end{figure}

\tableofcontents

\section{The compactification $\mathfrak{X}_g$}\label{sec-compactification}

In \cite{Manon-Outer} a divisorial compactification $\cx(F_g, \SL_2(\Cc)) \subset \cx_\Gamma$ is constructed for every ribbon graph $\Gamma$ with first Betti number $g$. It can be shown that $\cx_\Gamma$ does not depend on the ribbon structure of $\Gamma$.  By \cite[Proposition 8.2]{Manon-Outer}, the boundary $\cx_\Gamma \setminus \cx(F_g, \SL_2(\Cc))$ is of normal crossings type, with one irreducible divisor $D_e$ for each edge $e \in E(\Gamma)$. These divisors are sufficient to give the class group of $\cx_\Gamma$.

\begin{proposition}\label{prop-class}
For any ribbon graph $\Gamma$ with first Betti number $g$, the class group is $$\Cl(\cx_\Gamma) \cong \bigoplus_{e \in E(\Gamma)} \Zz D_e.$$
\end{proposition}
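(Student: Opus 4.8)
The plan is to combine the standard excision (localization) exact sequence for divisor class groups with the factoriality of the affine character variety. Since $\cx_\Gamma$ is normal (as in the construction of \cite{Manon-Outer}), Weil divisors and $\Cl(\cx_\Gamma)$ are well-defined; and by \cite[Proposition 8.2]{Manon-Outer} the boundary $\cx_\Gamma \setminus \cx(F_g,\SL_2(\Cc))$ is a reduced divisor whose irreducible components are exactly the $D_e$, $e \in E(\Gamma)$ — in particular the boundary is pure of codimension one, so its complement is precisely the open subvariety $\cx(F_g,\SL_2(\Cc))$. Removing the finitely many $D_e$ one at a time and using the exact sequence $\Zz D \to \Cl(X) \to \Cl(X\setminus D) \to 0$ for an irreducible divisor $D$, one obtains the right-exact sequence
\[
\bigoplus_{e \in E(\Gamma)} \Zz D_e \ \xrightarrow{\ \alpha\ }\ \Cl(\cx_\Gamma) \ \longrightarrow\ \Cl\big(\cx(F_g,\SL_2(\Cc))\big) \ \longrightarrow\ 0.
\]
It then suffices to prove (i) $\Cl(\cx(F_g,\SL_2(\Cc))) = 0$, which gives surjectivity of $\alpha$, and (ii) $\alpha$ is injective.

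For (i), write $\cx(F_g,\SL_2(\Cc)) = \SL_2(\Cc)^g /\!\!/ \SL_2(\Cc)$ for the GIT quotient by simultaneous conjugation. The group $\SL_2(\Cc)^g$ is a connected semisimple simply connected linear algebraic group, hence smooth with $\mathrm{Pic}(\SL_2(\Cc)^g) = \Cl(\SL_2(\Cc)^g) = 0$, i.e.\ $\Cc[\SL_2(\Cc)^g]$ is a UFD; moreover its character group is trivial, so $\Cc[\SL_2(\Cc)^g]^\times = \Cc^\times$. By the standard criterion for factoriality of rings of invariants — a connected group with no nontrivial characters acting on a factorial affine variety whose invertible functions are all constant has a factorial invariant ring (see the invariant theory literature, e.g.\ Popov--Vinberg) — the coordinate ring $\Cc[\SL_2(\Cc)^g]^{\SL_2(\Cc)}$ is a UFD, so $\Cl(\cx(F_g,\SL_2(\Cc))) = 0$. (For $g=1,2$ this is transparent, since $\cx(F_1,\SL_2(\Cc)) \cong \Aa^1$ and $\cx(F_2,\SL_2(\Cc)) \cong \Aa^3$.) The same argument records $\Cc[\cx(F_g,\SL_2(\Cc))]^\times = \big(\Cc[\SL_2(\Cc)^g]^\times\big)^{\SL_2(\Cc)} = \Cc^\times$, which we use next.

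For (ii), suppose $\sum_e a_e D_e = \mathrm{div}(f)$ for some $f$ in the function field of $\cx_\Gamma$, which equals the function field of $\cx(F_g,\SL_2(\Cc))$. Each $D_e$ lies in the boundary, so restricting to the open affine $\cx(F_g,\SL_2(\Cc))$ gives $\mathrm{div}\big(f|_{\cx(F_g,\SL_2(\Cc))}\big) = 0$; since this variety is normal and affine, $f$ is a unit in its coordinate ring, hence a nonzero constant by the previous paragraph. Then $\mathrm{div}(f) = 0$ on all of $\cx_\Gamma$, forcing $a_e = 0$ for every $e$. Thus $\alpha$ is injective, and together with (i) it is an isomorphism, giving $\Cl(\cx_\Gamma) \cong \bigoplus_{e\in E(\Gamma)} \Zz D_e$.

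The main obstacle is ingredient (i): proving that the coordinate ring of the free-group $\SL_2(\Cc)$ character variety is a UFD. Everything else — the excision sequence, the restriction argument for injectivity, and the triviality of the units — is formal once the normality of $\cx_\Gamma$ and the boundary structure from \cite{Manon-Outer} are available. The care needed for (i) is just to verify the hypotheses of the invariant-theoretic factoriality criterion: that $\SL_2(\Cc)^g$ is factorial with only constant units and that $\SL_2(\Cc)$ is connected with trivial character lattice, all of which hold because $\SL_2(\Cc)^g$ is semisimple and simply connected.
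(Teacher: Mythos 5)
Your argument is correct and follows essentially the same route as the paper: the excision sequence for the boundary divisors $D_e$, combined with factoriality of $\cx(F_g,\SL_2(\Cc))$ (killing $\Cl$ of the open part) and the fact that its only units are constants (giving injectivity). The only difference is that the paper simply cites \cite{Lawton-Manon} for factoriality, whereas you rederive it from the Popov--Vinberg criterion applied to the conjugation action of $\SL_2(\Cc)$ on the factorial variety $\SL_2(\Cc)^g$ with constant units --- a valid, self-contained substitute for that citation.
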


\begin{proof}
The divisors $D_e$ are the irreducible components of the complement $\cx_\Gamma \setminus \cx(F_g, \SL_2(\Cc))$, so there is an exact sequence of groups:$$\Cc[\cx(F_g,\SL_2(\Cc))]^* \to \bigoplus_{e \in E(\Gamma)} \Zz D_e \to \Cl(\cx_\Gamma) \to \Cl(\cx(F_g,\SL_2(\Cc))) \to 0.$$ The character variety $\cx(F_g,\SL_2(\Cc))$ is factorial \cite{Lawton-Manon}, and the only units in the coordinate ring $\Cc[\cx(F_g,\SL_2(\Cc))]$ are constants.
\end{proof}

If the graph $\Gamma$ is trivalent, then it determines two cones $C_\Gamma$, $P(\Gamma)$ which give $\cx(F_g, \SL_2(\Cc))$ many of the features of an affine toric variety. Points $\gamma \in C_\Gamma \cong \Rr_{\geq 0}^{E(\Gamma)}$ each determine a real-valued valuation $\v_\gamma$ on $\Cc[\cx(F_g, \SL_2(\Cc))]$ called a length function.  This cone can be identified with a \emph{prime cone} in the tropical variety of $\cx(F_g, \SL_2(\Cc))$ associated to its embedding by a certain finite collection $Y_g$ of regular functions, \cite[Proposition 6.3]{Manon-Outer}.  We will use a compactification of the toric flat family associated to the integral points of $C_\Gamma$ for our main construction in Section \ref{sec-degeneration}.  

The second cone $P(\Gamma) \subset \Rr^{E(\Gamma)}$ is the set of $a \in \Rr^{E(\Gamma)}$ 
for which $$a(e) \leq a(f) + a(g),$$ $$a(f) \leq a(e) + a(g),$$ $$a(g) \leq a(e) + a(f),$$
for any three edges $e, f, g \in E(\Gamma)$ which contain a common vertex.  Accordingly, we say $a \in P(\Gamma)$ satisfies the \emph{triangle inequalities} at each vertex of $\Gamma$. There is a natural pairing $\langle -, - \rangle_\Gamma: C_\Gamma \times P(\Gamma) \to \Rr$ induced by the inner product on $\Rr^{E(\Gamma)}$. We often consider the cone $P(\Gamma)$ with respect to the sublattice $M_\Gamma \subset \Zz^{E(\Gamma)}$ defined by the condition $a(e) + a(f) + a(g) \in 2\Zz$ for every $e, f, g \in E(\Gamma)$ which share a common vertex. We let $N_\Gamma = \Hom(M_\Gamma, \Zz)$ denote the dual lattice. 

The trivalent graph $\Gamma$ also determines a basis $\B_\Gamma \subset \Cc[\cx(F_g, \SL_2(\Cc))]$ of the regular functions on $\cx(F_g, \SL_2(\Cc))$.  In particular, there is an element $\Phi_a \in \B_\Gamma$, called a spin diagram, for each lattice point $a \in P(\Gamma) \cap M_\Gamma$. For $a \in P(\Gamma) \cap M_\Gamma$ and $\gamma \in C_\Gamma$ we have $\v_\gamma(\Phi_a) = \langle \gamma, a \rangle_\Gamma$ by \cite[Corollary 6.12]{Manon-Outer}.  Moreover, by \cite[Proposition 8.3]{Manon-Outer}, the section space $H^0(\cx_\Gamma, \sum n_e D_e)$ is the subspace of $\Cc[\cx(F_g,\SL_2(\Cc)]$ spanned by those spin diagrams $\Phi_a$ with $a(e) \leq n_e$.  If $\hat{\Gamma}$ is a graph with first Betti number $g$, we can still obtain a nice basis of each section space of $\cx_{\hat{\Gamma}}$ by finding a trivalent graph $\Gamma$ and a sufficiently nice surjection $\pi: \Gamma \to \hat{\Gamma}$. 

\begin{proposition}\label{prop-sectionspaces}
Let $\Gamma$ and $\hat{\Gamma}$ be graphs with first Betti number equal to $g$, and suppose that $\Gamma$ is trivalent. Let $\pi: \Gamma \to \hat{\Gamma}$ be a map of graphs which collapses edges $f_1, \ldots, f_\ell$ while mapping the remaining edges bijectively onto the edges of $\hat{\Gamma}$, then the section space $H^0(\cx_{\hat{\Gamma}}, \sum_{e} n_e D_e)$ has a basis given by the elements of $\B_\Gamma$ with $a(\pi(e')) \leq n_{\pi(e')}$.
\end{proposition}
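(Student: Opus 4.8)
The plan is to pull the computation of sections back along $\pi$ to the trivalent graph $\Gamma$, where \cite[Proposition 8.3]{Manon-Outer} already describes all section spaces in terms of $\B_\Gamma$. First, by \cite[Proposition 8.2]{Manon-Outer} applied to the ribbon graph $\hat\Gamma$, the complement $\cx_{\hat\Gamma}\setminus\cx(F_g,\SL_2(\Cc))$ is a normal crossings divisor whose irreducible components are exactly the $D_{\bar e}$, $\bar e\in E(\hat\Gamma)$, so $\cx_{\hat\Gamma}$ is normal; since its interior is affine this yields
\[
H^0\!\Big(\cx_{\hat\Gamma},\textstyle\sum_{\bar e}n_{\bar e}D_{\bar e}\Big)=\big\{\,f\in\Cc[\cx(F_g,\SL_2(\Cc))]:\operatorname{ord}_{D_{\bar e}}(f)\ge -n_{\bar e}\ \text{for all }\bar e\in E(\hat\Gamma)\,\big\}.
\]

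Next, $\pi$ enters through the following claim: for a non-collapsed edge $e'$ of $\Gamma$, writing $\bar e:=\pi(e')$, the divisorial valuations $\operatorname{ord}_{D_{\bar e}}$ on $\cx_{\hat\Gamma}$ and $\operatorname{ord}_{D_{e'}}$ on $\cx_\Gamma$ restrict to the same function on $\Cc[\cx(F_g,\SL_2(\Cc))]$ — namely, up to sign, the length function indexed by the edge $e'\cong\bar e$ — because $\cx_{\hat\Gamma}$ is built from exactly the sub-family of edge valuations indexed by the non-collapsed edges of $\Gamma$. Granting this, the right-hand side above becomes $\{\,f:\operatorname{ord}_{D_{e'}}(f)\ge -n_{\pi(e')}\ \text{for all non-collapsed }e'\,\}$, with no condition imposed along the collapsed divisors $D_{f_1},\dots,D_{f_\ell}$. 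To finish, I would note that this last space is the increasing union over $N\gg 0$ of $H^0\big(\cx_\Gamma,\ \sum_{e'}n_{\pi(e')}D_{e'}+N\sum_{i}D_{f_i}\big)$, where the first sum runs over the non-collapsed edges. By \cite[Proposition 8.3]{Manon-Outer} the $N$-th term has basis the $\Phi_a\in\B_\Gamma$ with $a(e')\le n_{\pi(e')}$ for all non-collapsed $e'$ and $a(f_i)\le N$ for all $i$; since $\B_\Gamma$ is a basis of the whole coordinate ring, these subspaces stabilize for $N$ large to the span of the $\Phi_a$ with $a(e')\le n_{\pi(e')}$ for every non-collapsed $e'$. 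This exhibits the asserted basis, and being a subset of $\B_\Gamma$ it is automatically linearly independent. (The stabilization — equivalently, the finiteness of the asserted basis — holds because the polytope $\{\,a\in P(\Gamma):a(e')\le n_{\pi(e')}\ \text{for all non-collapsed }e'\,\}$ is bounded: one has $P(\Gamma)\subseteq\Rr^{E(\Gamma)}_{\ge 0}$, and the triangle inequalities at the ends of a collapsed edge, propagated along the forest $\{f_1,\dots,f_\ell\}$, then bound the $a(f_i)$ once the $a(e')$ are bounded.)

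The step I expect to require real care is the identification of the two divisorial valuations in the second paragraph — that $D_{\bar e}\subset\cx_{\hat\Gamma}$ and $D_{e'}\subset\cx_\Gamma$ induce the same valuation on $\Cc[\cx(F_g,\SL_2(\Cc))]$ whenever $\pi(e')=\bar e$. If \cite{Manon-Outer} provides a birational morphism $\cx_\Gamma\to\cx_{\hat\Gamma}$ contracting precisely $D_{f_1},\dots,D_{f_\ell}$, the identification is immediate by pullback of divisors; in its absence one compares directly the local models of the two boundaries used in that construction. Everything else — the section-space identity and the passage to $\B_\Gamma$ via \cite[Proposition 8.3]{Manon-Outer} — is formal, given the affineness of $\cx(F_g,\SL_2(\Cc))$ and the fact that $\B_\Gamma$ is a vector-space basis.
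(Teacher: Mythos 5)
Your argument is correct in outline, but it takes a more roundabout route than the paper, whose entire proof is a direct appeal to \cite[Proposition 8.3]{Manon-Outer}: that proposition is already formulated to describe the section spaces of $\cx_{\hat{\Gamma}}$ for a non-trivalent graph in terms of the spin diagram basis attached to a trivalent resolution $\pi:\Gamma\to\hat{\Gamma}$, so no reduction argument is needed there. What you do instead is use only the trivalent instance of that result and rebuild the passage to $\hat{\Gamma}$ by hand: identify sections with functions on the affine interior having bounded pole order along the boundary divisors (fine, by normality and \cite[Proposition 8.2]{Manon-Outer}), identify $\operatorname{ord}_{D_{\bar e}}$ with $\operatorname{ord}_{D_{e'}}$, exhaust by the section spaces $H^0(\cx_\Gamma,\sum n_{\pi(e')}D_{e'}+N\sum_i D_{f_i})$ as $N\to\infty$, and check finiteness by bounding the slice of $P(\Gamma)$ — your propagation of the triangle inequalities along the collapsed forest is the same mechanism as the paper's Lemma \ref{lemma-isPolytope}, and it does require (and you implicitly use) that $f_1,\dots,f_\ell$ form a forest, which is forced by the equality of Betti numbers. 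The one step you leave conditional — that $\operatorname{ord}_{D_{\bar e}}$ on $\cx_{\hat{\Gamma}}$ restricts on $\Cc[\cx(F_g,\SL_2(\Cc))]$ to (minus) the length-function valuation of the edge $e'$, equivalently agrees with $\operatorname{ord}_{D_{e'}}$ on $\cx_\Gamma$ — is exactly the content supplied by the construction in \cite{Manon-Outer} (the compactifications are built from the length-function valuations; compare the use of \cite[Corollary 6.12, Propositions 8.2--8.3]{Manon-Outer} in Section \ref{sec-compactification}), and your DVR/birational-pullback justification would indeed close it once such a contraction map is in hand. So both arguments ultimately rest on the same external input; yours buys a more self-contained and transparent derivation from the trivalent case, at the cost of an extra step that you should turn from a conditional remark into an explicit citation or a short local-model comparison.
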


\begin{proof}
This follows from \cite[Proposition 8.3]{Manon-Outer}. 
\end{proof}

In the sequel we consider the case where $\hat{\Gamma}$ is a wedge of $g$ loops at a vertex, and $\Gamma$ is a trivalent graph with a choice of spanning tree $\ct \subset \Gamma$.  When $\ct$ is collapsed to a single vertex, the result is $\hat{\Gamma}$. We denote the compactification given by a wedge of $g$ loops by $\mathfrak{X}_g$.  Following \cite{Manon-Outer}, $\mathfrak{X}_g$ is constructed as a GIT quotient by an action of $\SL_2(\Cc)$ on a product $X^g$. Here $X$ is an $\SL_2(\Cc)\times \SL_2(\Cc)$ projective compactification of $\SL_2(\Cc)$, equivariantly embedded in $\mathbb{P}^4$ as the $0$-locus of the quadratic polynomial $A D - B C - t^2$.  We let $i: X \to \mathbb{P}^4$ be the inclusion map, and $D \subset X$ denote the divisor obtained by setting $t = 0$.  The sheaf of sections of $D$ is then the pullback of $\mathcal{O}(1)$ on $\mathbb{P}^4$, and can be identified with the sections of the pullback line bundle $\mathcal{L} = i^*\mathcal{O}(1)$. The space $\mathfrak{X}_g$ is the GIT quotient: $$\mathfrak{X}_g = \SL_2(\Cc) \backslash\!\backslash_{\mathcal{L}^{\boxtimes g}} X^g,$$
where $\SL_2(\Cc)$ acts through the diagonal of $\SL_2(\Cc)\times \SL_2(\Cc)$ on each component of $X^g$. The divisor $D \subset X$ can be regarded as a projectivization of the singular $2\times 2$ matrices. In particular, two singular matrices $A_1, A_2$ define the same point of $D$ if $A_1 = dA_2$, where $d \in \Cc^*$. In this way, $\mathfrak{X}_g$ is a space of $\SL_2(\Cc)$ representations of $F_g$ where the values of the generators of $F_g$ are allowed to degenerate to equivalence classes of singular matrices.

\section{The Polytopes}\label{sec-polytopes}

In this section we introduce the polytope $P(\Gamma, \ct)$ where $\Gamma$ is a trivalent graph with first Betti number $g$, and $\ct \subset \Gamma$ is a spanning tree. In Section \ref{sec-degeneration}, we argue that $3P(\Gamma,\ct)$ is the moment polytope for a toric degeneration of the compactification $\mathfrak{X}_g$. We let $E(\Gamma)$ and $V(\Gamma)$ denote the edge and vertex sets of $\Gamma$, and we let $\ell_1, \ldots, \ell_g$ denote the elements in the set $F(\Gamma, \ct) = E(\Gamma) \setminus E(\ct)$. 

\begin{definition}\label{def-polytope}
Let $P(\Gamma, \ct) \subset P(\Gamma) \subset \Rr^{E(\Gamma)}$ be the set of points $a$ such that $a(\ell) \leq 1$ for any $ \ell \in F(\Gamma, \ct)$. We refer to these conditions as the \emph{boundary inequalities}. 
\end{definition}

\begin{lemma}\label{lemma-isPolytope}
$P(\Gamma,\ct)$ is a polytope of dimension $3g-3$.
\end{lemma}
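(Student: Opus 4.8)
The plan is to show separately that $P(\Gamma,\ct)$ is bounded, that it is full-dimensional inside the affine span of $P(\Gamma)$, and that $\dim P(\Gamma) = 3g-3$, from which the claim follows since $P(\Gamma,\ct)$ is cut out of the cone $P(\Gamma)$ by finitely many linear inequalities.

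First I would pin down the dimension of the ambient cone $P(\Gamma)$. For a trivalent graph $\Gamma$ with first Betti number $g$, Euler's formula gives $|V(\Gamma)| = 2g-2$ and $|E(\Gamma)| = 3g-3$; the triangle inequalities at each vertex are strict only on a lower-dimensional locus, so $P(\Gamma)$ has nonempty interior in $\Rr^{E(\Gamma)}$ and hence $\dim P(\Gamma) = |E(\Gamma)| = 3g-3$. Concretely, a point in the interior is obtained by assigning a sufficiently large constant to every edge, which satisfies all triangle inequalities strictly. So it suffices to show $P(\Gamma,\ct)$ is bounded and still full-dimensional.

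For full-dimensionality: a spanning tree $\ct$ has $|V(\Gamma)| - 1 = 2g-3$ edges, and the complement $F(\Gamma,\ct) = \{\ell_1,\dots,\ell_g\}$ has exactly $g$ edges. Take the candidate interior point to be a small positive constant $\varepsilon$ on every loop edge $\ell_i$ and a slightly larger constant on the tree edges — more carefully, choose values so that every triangle inequality is strict and every loop value is strictly below $1$; such a point lies in the relative interior of $P(\Gamma,\ct)$ inside $\mathrm{aff}(P(\Gamma)) = \Rr^{E(\Gamma)}$, giving $\dim P(\Gamma,\ct) = 3g-3$.

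The main obstacle is boundedness: the cone $P(\Gamma)$ is genuinely unbounded (scaling any interior point by $t \to \infty$ stays inside), so one must use the boundary inequalities $a(\ell) \le 1$ together with the triangle inequalities in an essential way. The key point is that $a(\ell) \ge 0$ for every edge (the triangle inequalities at a vertex force each of its three incident edge-values to be nonnegative, and connectivity propagates this), so each loop coordinate is confined to $[0,1]$. For a tree edge $f$, I would bound $a(f)$ by walking along the unique path in $\Gamma$ from $f$ to a loop edge: since $\Gamma$ is trivalent and connected, every tree edge lies on a path to some $\ell_i$, and repeatedly applying the triangle inequality $a(f) \le a(f') + a(f'')$ at each vertex along such a path expresses $a(f)$ as bounded by a sum of loop-edge values plus terms that telescope, each ultimately controlled by the constants $1$. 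A cleaner route: show directly that the recession cone of $P(\Gamma,\ct)$ is trivial — a recession direction $r$ must satisfy $r(\ell) \le 0$ and $r(\ell) \ge 0$ (nonnegativity holds on the whole cone $P(\Gamma)$, hence on its recession cone), so $r(\ell) = 0$ for all loops, and then the triangle inequalities force $r \equiv 0$ by the same path-walking argument. A polyhedron with trivial recession cone is bounded, completing the proof.
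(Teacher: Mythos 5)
Your overall strategy (exhibit an interior point for full-dimensionality, then prove boundedness) matches the paper's, and the reduction of boundedness to triviality of the recession cone is a nice variant: you correctly observe that the triangle inequalities force $a \geq 0$ coordinatewise on $P(\Gamma)$, so any recession direction $r$ of $P(\Gamma,\ct)$ satisfies $r(\ell) = 0$ for all $\ell \in F(\Gamma,\ct)$. The paper instead derives explicit bounds ($a(e) \leq 2^{i+1}$) by induction. For full-dimensionality the paper simply takes the constant point $(2/3,\dots,2/3)$, which strictly satisfies every triangle and boundary inequality; your two-level choice also works but requires the extra check (e.g.\ $c < 2\varepsilon$ at a vertex carrying a loop), and note that for general $\Gamma,\ct$ the edges of $F(\Gamma,\ct)$ need not be loops, so calling them ``loop edges'' is only accurate in the special case treated later in the paper.

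The genuine gap is the propagation step, in both of its incarnations. ``Walking along the unique path from a tree edge $f$ to an edge of $F(\Gamma,\ct)$ and applying the triangle inequality at each vertex'' does not telescope: at each internal vertex of the path the inequality reads $a(f_j) \leq a(f_{j+1}) + a(h_j)$, where $h_j$ is the third edge at that vertex, and $h_j$ is in general an off-path tree edge whose value is not yet controlled by anything. So neither the claimed bound on $a(f)$ nor, in the recession-cone version, the conclusion $r \equiv 0$ follows from a single path. The repair is exactly the paper's induction: strip the spanning tree from its leaves inward. A leaf edge of $\ct$ meets, at its leaf vertex, only edges of $F(\Gamma,\ct)$ (each bounded by $1$, or equal to $0$ on a recession direction), so it is bounded by $2$ (resp.\ equal to $0$); deleting the leaves and repeating bounds every tree edge, the paper obtaining $a(e) \leq 2^{i+1}$ at stage $i$, and in your recession-cone formulation this gives $r \equiv 0$ at once. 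With that leaf-stripping induction substituted for the path-walking, your proof is complete and is essentially the paper's argument, with the recession cone replacing explicit constants.
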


\begin{proof}
We begin by showing $P(\Gamma,\ct)$ is bounded. By definition, if $a \in P(\Gamma,\ct)$, then $0 \leq a(e) \leq 1$ for all $e \in F(\Gamma,\ct)$. Let $\ct_0 = \ct$ and let $L_0$ be the set of leaves of $\ct_0$. Each leaf in $\ct_0$ must be either adjacent to a loop or two edges in $F(\Gamma,\ct)$; as these edges are bounded by 1, we have that $0\leq a(e) \leq 2$.

For $i > 0$, set $\ct_i = \ct_{i-1} \setminus L_{i-1}$ and let $L_i$ be the set of leaves in $\ct_i$. We claim that $0 \leq a(e) \leq 2^{i+1}$ for all $e \in L_i$. By induction, we can assume that for every $e\in L_{i-1}$, we have $0 \leq a(e) \leq 2^{i+1}$. Suppose that $e \in L_i$. Then $e$ is adjacent to two edges $f_1,f_2 \in F(\Gamma,\ct) \cup L_0 \cup \dots \cup L_{i-1}$. The triangle inequalities imply that 
\[
    0 \leq a(e) \leq 2^{j+1} + 2^{k+1} \leq 2^{i+1}
\]
for some $-1\leq j,k < i$ finishing the inductive step. This process terminates after finitely many, say $n$, steps; hence, we see that $P(\Gamma,\ct) \subseteq [0,2^{n+1}]^{E(\Gamma)}$.

To see that $\dim P(\Gamma,\ct)$ is $3g-3$, we need to show it is full-dimensional as $|E(\Gamma)| = 3g-3$. To this end, note that $p = (2/3,\dots,2/3) \in \Rr^{E(\Gamma)}$ lies in the interior of $P(\Gamma,\ct)$ since it strictly satisfies all the boundary and triangle inequalities. Letting $r$ be the minimum distance from $p$ to any one of the supporting hyperplanes of $P(\Gamma,\ct)$, we see that $P(\Gamma,\ct)$ contains the ball of radius $r$ centered at $p$; hence, $P(\Gamma,\ct)$ is full-dimensional.
\end{proof}

Suppose $\Gamma$ is obtained by adding a loop at every leaf of $\ct$. In Section \ref{subsec-GF}, we show that the third Minkowski dilate of $P(\Gamma,\ct)$ is an integral translate of a reflexive polytope, and in Section \ref{subsec-normality}, we show that $P(\Gamma,\ct)$ is a normal lattice polytope. As we shall see, these results do not hold for general pairs $\Gamma,\ct$. Before presenting the proofs of these two results, we make a final remark about the lattices $M_\Gamma$ and $N_\Gamma$ when $\Gamma,\ct$ is of this prescribed form.

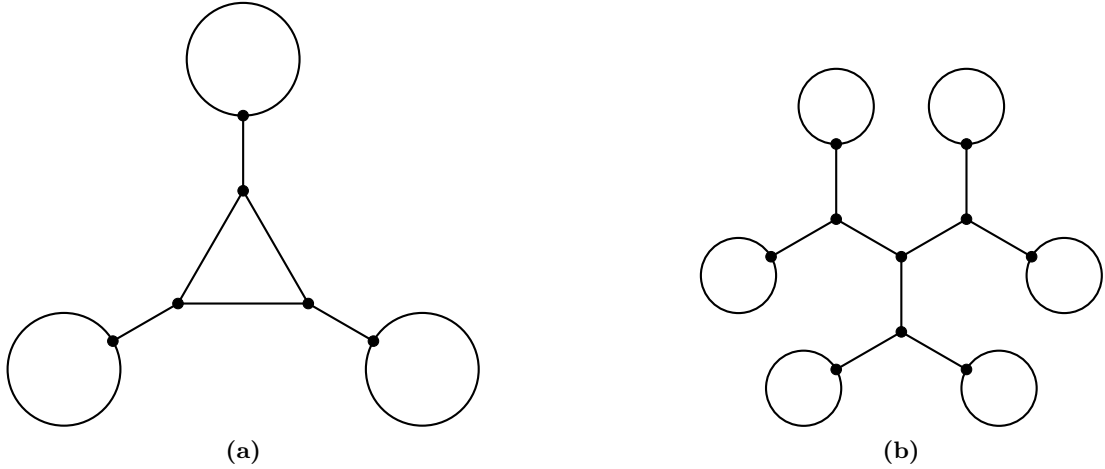
\begin{figure}
    \centering
    \subfloat[\label{fig:nontreeexample}]{
        \begin{tikzpicture}
    
        \coordinate (V1) at (0,1);
        \coordinate (V2) at ({cos(-30)},{sin(-30});
        \coordinate (V3) at ({cos(210)}, {sin(210)});
        \coordinate (A1) at (0,2);
        \coordinate (A2) at ({2*cos(-30)},{2*sin(-30});
        \coordinate (A3) at ({2*cos(210)}, {2*sin(210)});
    
        \draw [thick] (V1) -- (V2);
        \draw [thick] (V1) -- (V3);
        \draw [thick] (V2) -- (V3);
        \draw [thick] (V1) -- (A1);
        \draw [thick] (V2) -- (A2);
        \draw [thick] (V3) -- (A3);
        
        \draw [thick] (0,2.75) circle [radius = 0.75];
        \draw [thick] ({2.75*cos(-30)}, {2.75*sin(-30)}) circle [radius = 0.75];
        \draw [thick] ({2.75*cos(210)}, {2.75*sin(210)}) circle [radius = 0.75];
        
        \draw [fill] (V1) circle [radius = 0.07];
        \draw [fill] (V2) circle [radius = 0.07];
        \draw [fill] (V3) circle [radius = 0.07];
        \draw [fill] (A1) circle [radius = 0.07];
        \draw [fill] (A2) circle [radius = 0.07];
        \draw [fill] (A3) circle [radius = 0.07];
        
        \end{tikzpicture}
    }\hspace{1in}
    \subfloat[\label{fig:treewithloops}]{
        \centering
        \begin{tikzpicture}
    
        \coordinate (V1) at (0,0);
        \coordinate (V2) at ({cos(30)}, {sin(30)});
        \coordinate (V3) at ({cos(150)}, {sin(150)});
        \coordinate (V4) at (0,-1);
        \coordinate (V5) at ({cos(30)}, {sin(30) + 1});
        \coordinate (V6) at ({2*cos(30)}, 0);
        \coordinate (V7) at ({cos(150)}, {sin(150) + 1});
        \coordinate (V8) at ({cos(150) + cos(210)}, {sin(150) + sin(210)});
        \coordinate (V9) at ({cos(210)},{-1 + sin(210)});
        \coordinate (V10) at ({cos(-30)}, {-1 + sin(-30)});
        
        \coordinate (C1) at ({cos(30)}, {sin(30) + 1 + 0.5});
        \coordinate (C2) at ({2.5*cos(30)}, {0.5*sin(-30)});
        \coordinate (C3) at ({cos(150)}, {sin(150) + 1.5});
        \coordinate (C4) at ({cos(150) + 1.5*cos(210)}, {sin(150) + 1.5*sin(210)});
        \coordinate (C5) at ({1.5*cos(210)},{-1 + 1.5*sin(210)});
        \coordinate (C6) at ({1.5*cos(-30)}, {-1 + 1.5*sin(-30)});
        
        \draw [thick] (V1) -- (V2);
        \draw [thick] (V1) -- (V3);
        \draw [thick] (V1) -- (V4);
        \draw [thick] (V2) -- (V5);
        \draw [thick] (V2) -- (V6);
        \draw [thick] (V3) -- (V7);
        \draw [thick] (V3) -- (V8);
        \draw [thick] (V4) -- (V9);
        \draw [thick] (V4) -- (V10);
        
        \draw [thick] (C1) circle [radius = 0.5];
        \draw [thick] (C2) circle [radius = 0.5];
        \draw [thick] (C3) circle [radius = 0.5];
        \draw [thick] (C4) circle [radius = 0.5];
        \draw [thick] (C5) circle [radius = 0.5];
        \draw [thick] (C6) circle [radius = 0.5];
        
        \draw [fill] (V1) circle [radius = 0.07];
        \draw [fill] (V2) circle [radius = 0.07];
        \draw [fill] (V3) circle [radius = 0.07];
        \draw [fill] (V4) circle [radius = 0.07];
        \draw [fill] (V5) circle [radius = 0.07];
        \draw [fill] (V6) circle [radius = 0.07];
        \draw [fill] (V7) circle [radius = 0.07];
        \draw [fill] (V8) circle [radius = 0.07];
        \draw [fill] (V9) circle [radius = 0.07];
        \draw [fill] (V10) circle [radius = 0.07];
        \end{tikzpicture}
    } 
     \hfill
    \caption{Two examples of trivalent graphs. The one on the left is not one considered in Section \ref{subsec-GF} while the one the right is.}
    \label{fig:trivalent graph examples}
\end{figure}

\begin{lemma}\label{lem-lattice}
Let $\ct$ be a trivalent tree with $g$ leaves, and let $\Gamma$ be the graph obtained from $\ct$ by adding a loop to each leaf. The lattice $M_\Gamma \subseteq \Zz^{E(\Gamma)}$ is equal to 
\[
    \Zz^{F(\Gamma,\ct)} \oplus (2\Zz)^{E(\ct)}
\]
and $N_\Gamma$ is
\[
    \Zz^{F(\Gamma,\ct)} \oplus \left(\frac{1}{2}\Zz\right)^{E(\ct)}.
\]
\end{lemma}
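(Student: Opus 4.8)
The plan is to read $M_\Gamma$ off directly from its defining parity conditions, exploiting the special shape of $\Gamma$, and then to obtain $N_\Gamma$ by dualizing. Since $\Gamma$ is built from $\ct$ by attaching one loop at each leaf, the vertices of $\Gamma$ are exactly the vertices of $\ct$. An interior vertex $w$ of $\ct$ is a trivalent vertex of $\Gamma$ all of whose incident edges lie in $E(\ct)$, so it contributes the condition $a(e) + a(f) + a(g) \in 2\Zz$ on the three tree edges at $w$. A leaf $v_i$ of $\ct$ is a trivalent vertex of $\Gamma$ incident to its unique tree edge $e_i$ and, with multiplicity two, to the loop $\ell_i$; it contributes the condition $2a(\ell_i) + a(e_i) \in 2\Zz$, equivalently $a(e_i) \in 2\Zz$. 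The inclusion $\Zz^{F(\Gamma,\ct)} \oplus (2\Zz)^{E(\ct)} \subseteq M_\Gamma$ is then immediate, since if all tree-edge coordinates of $a$ are even then every vertex condition is a sum of even integers.

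The real content is the reverse inclusion: that $a \in M_\Gamma$ forces $a(e) \in 2\Zz$ for \emph{every} tree edge, not just the leaf edges $e_i$. I would prove this by a parity-propagation computation on the tree. Fix $e \in E(\ct)$ and let $S \subseteq V(\ct)$ be the vertex set of one of the two connected components of $\ct \setminus e$. Summing $\sum_{f \in E(\ct),\, f \ni v} a(f)$ over all $v \in S$, a tree edge with both endpoints in $S$ is counted twice and contributes an even amount, a tree edge with no endpoint in $S$ contributes nothing, and $e$ --- the unique tree edge with exactly one endpoint in $S$ --- is counted once; hence the double sum is congruent to $a(e)$ modulo $2$. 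On the other hand, for each $v \in S$ the inner sum is either the sum of the three tree edges at an interior vertex of $\ct$ (even by the $M_\Gamma$-condition) or equal to $a(e_i)$ at a leaf $v_i$ (even by the leaf condition above), so the double sum is $\equiv 0$. Therefore $a(e)$ is even, which completes the computation of $M_\Gamma$.

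It then remains to identify $N_\Gamma = \Hom(M_\Gamma, \Zz)$, realized inside $\Rr^{E(\Gamma)}$ as the dual lattice of $M_\Gamma$ with respect to the standard inner product. With respect to this inner product $M_\Gamma$ is the orthogonal direct sum of a copy of $\Zz$ along each loop coordinate and a copy of $2\Zz$ along each tree-edge coordinate, so its dual lattice is the orthogonal direct sum of the dual lattices of the summands, that is, a copy of $\Zz$ along each loop coordinate and a copy of $\frac{1}{2}\Zz$ along each tree-edge coordinate. Equivalently, an $n \in \Rr^{E(\Gamma)}$ pairs integrally with every standard generator of $M_\Gamma$ precisely when its loop coordinates are integers and its tree-edge coordinates are half-integers, and conversely any such $n$ pairs integrally with all of $M_\Gamma$ since a half-integer times an even integer is an integer. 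This yields $N_\Gamma = \Zz^{F(\Gamma,\ct)} \oplus \left(\frac{1}{2}\Zz\right)^{E(\ct)}$.

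The only step that takes any thought is the reverse inclusion for $M_\Gamma$; the subtree-summation argument disposes of it uniformly in $g$ and avoids a more cumbersome induction on $\ct$.
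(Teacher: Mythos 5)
Your proof is correct. Where it departs from the paper is the key propagation step showing $a(e) \in 2\Zz$ for every tree edge: the paper runs a leaf-peeling induction (set $\ct_0 = \ct$, let $\ct_i$ be $\ct_{i-1}$ with its leaves removed), first getting evenness of the leaf edges from the loop-vertex condition $2a(\ell) + a(e) \in 2\Zz$ and then pushing evenness inward one layer at a time via the trivalent vertex condition, whereas you sum the tree-edge parity conditions over the vertex set of one component of $\ct \setminus e$ and note that mod $2$ the double count telescopes to $a(e)$, since edges internal to the component are counted twice, $e$ once, and each vertex contributes an even amount (the three tree edges at an interior vertex, or the single already-even leaf edge $a(e_i)$). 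Both arguments rest on the same two inputs (the condition at a loop vertex and the condition at an interior trivalent vertex), but yours packages them into a single non-inductive cut-counting argument, uniform in $g$, which sidesteps the bookkeeping of which edges are leaves at which stage of the pruning (a point the paper's induction treats somewhat loosely); the paper's peeling argument, on the other hand, establishes all edges in one sweep and matches the contraction/pruning style of induction used elsewhere in the paper (e.g.\ in the reflexivity proof). Your explicit check of the easy inclusion and the coordinatewise dualization giving $N_\Gamma = \Zz^{F(\Gamma,\ct)} \oplus \bigl(\tfrac{1}{2}\Zz\bigr)^{E(\ct)}$ agree with the paper's brief treatment.
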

\begin{proof}
Let $a \in M_\Gamma$. Set $\ct_0 = \ct$ and consider the leaves of $\ct_0$. Each leaf $e$ is connected to a loop $\ell$. Since $a \in M_\Gamma$, we have that $2a(\ell) + a(e) \in 2\Zz$. It follows that $a(\ell) \in \Zz$ and $a(e) \in 2\Zz$. 

For $i>0$, let $\ct_i$ be the subtree of $\ct_{i-1}$ obtained by removing all the leaves of $\ct_i$. Assume inductively that for each leaf $e$ of $\ct_{i-1}$, we have that $a(e) \in 2\Zz$. Consider a leaf $f$ of $\ct_i$. Since $\ct$ is trivalent, it is adjacent to edges $e_1,e_2 \in E(\ct_{i-1})$. As $a \in M_\Gamma$, we have that $a(e_1) + a(e_2) + a(f) \in 2\Zz$. Since $a(e_1)$ and $a(e_2)$ are even, we must have that $a(f)$ is even as well. Continuing in this way until $\ct_j$ is empty, we see that $M_\Gamma$ is $\Zz^{F(\Gamma,\ct)} \oplus (2\Zz)^{E(\ct)}$. 

The second statement about $N_\Gamma$ follows since it is the set of all $b \in \Rr^{E(\Gamma)}$ for which $a\cdot b \in \Zz$ for all $a \in M_\Gamma$.
\end{proof}

\subsection{The Gorenstein-Fano property}\label{subsec-GF}

As currently defined, $P(\Gamma, \ct)$ has the origin as a vertex; thus, no dilate will be reflexive. To amend this issue, we consider a new polytope $Q(\Gamma, \ct)$ defined by the following inequalities:

\begin{definition}
Let $Q(\Gamma, \ct) \subset \Rr^{E(\Gamma)}$ be the set of points satisfying the following conditions:

\begin{enumerate}
    \item For any three edges $e, f, g$ which share a commone vertex we must have: 
    \begin{align*}
    \frac{w(e) + w(f) - w(g)}{2}  &\geq -1, \\
    \frac{w(e) - w(f) + w(g)}{2}  &\geq -1, \\
    \frac{-w(e) + w(f) + w(g)}{2} &\geq -1.
    \end{align*}
    \item For any edge $\ell \in F(\Gamma, \ct)$ we must have:
    \[-w(\ell) \geq -1\]
\end{enumerate}
\end{definition}

\begin{remark}
The $\frac{1}{2}$'s are introduced so that the inward pointing normal vectors of $Q(\Gamma, \ct)$ are primitive with respect to the lattice $N_\Gamma$. In other words, the polar dual $Q(\Gamma, \ct)^\circ$ is a lattice polytope with respect to $N_\Gamma$.
\end{remark}

\begin{lemma}
The translation of $Q(\Gamma, \ct)$ by the vector consisting of all 2's is the third dilate of $P(\Gamma, \ct)$.
\end{lemma}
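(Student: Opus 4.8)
The statement to prove is that $Q(\Gamma,\ct) = 3P(\Gamma,\ct) - (2,2,\dots,2)$, equivalently $Q(\Gamma,\ct) + (2,\dots,2) = 3P(\Gamma,\ct)$. My plan is to translate coordinates by setting $w = a - 2\mathbf{1}$ (so $a$ ranges over $3P(\Gamma,\ct)$ exactly when $w$ ranges over $Q(\Gamma,\ct)$) and then show the two systems of linear inequalities become literally identical. First I would record the defining inequalities of $3P(\Gamma,\ct)$: since $P(\Gamma,\ct)$ is cut out by the triangle inequalities $a(e) \le a(f) + a(g)$ (and cyclic permutations) at each vertex together with the boundary inequalities $a(\ell) \le 1$ for $\ell \in F(\Gamma,\ct)$, the dilate $3P(\Gamma,\ct)$ is cut out by the same triangle inequalities (these are homogeneous, so scaling is automatic) together with $a(\ell) \le 3$ for each $\ell \in F(\Gamma,\ct)$. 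There is also the implicit nonnegativity $a(e) \ge 0$, which for $P(\Gamma)$ is actually a consequence of the triangle inequalities at a vertex (adding $a(e) \le a(f)+a(g)$ and $a(f) \le a(e)+a(g)$ gives $a(g)\ge 0$), so I do not need to carry it separately, though I should remark on it.

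Next I would substitute $a(e) = w(e) + 2$ into each of these. For a triangle inequality at a vertex with edges $e,f,g$: $a(e) \le a(f) + a(g)$ becomes $w(e) + 2 \le w(f) + w(g) + 4$, i.e. $w(f) + w(g) - w(e) \ge -2$, i.e. $\frac{-w(e)+w(f)+w(g)}{2} \ge -1$, which is exactly one of the three inequalities in item (1) of the definition of $Q(\Gamma,\ct)$; the other two cyclic triangle inequalities give the other two. For a boundary inequality $a(\ell) \le 3$ with $\ell\in F(\Gamma,\ct)$: this becomes $w(\ell) + 2 \le 3$, i.e. $w(\ell) \le 1$, i.e. $-w(\ell) \ge -1$, which is exactly item (2). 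Since the correspondence between inequalities is a bijection and the substitution $a \mapsto a - 2\mathbf 1$ is a bijection $\Rr^{E(\Gamma)} \to \Rr^{E(\Gamma)}$, this shows $a \in 3P(\Gamma,\ct) \iff w = a - 2\mathbf 1 \in Q(\Gamma,\ct)$, i.e. $Q(\Gamma,\ct) + (2,\dots,2) = 3P(\Gamma,\ct)$, as claimed.

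There is essentially no hard part here — it is a bookkeeping argument — but the one point requiring a sentence of care is the passage from "$P(\Gamma,\ct)$ is defined by certain inequalities" to "$3P(\Gamma,\ct)$ is defined by the correspondingly scaled inequalities." This is the standard fact that for a polyhedron $P = \{x : \langle n_i, x\rangle \le c_i\}$ one has $\lambda P = \{x : \langle n_i, x\rangle \le \lambda c_i\}$ for $\lambda > 0$; it applies because the triangle inequalities have homogeneous right-hand side $c_i = 0$ (so they are unchanged under scaling) and the boundary inequalities have $c_i = 1$ (scaling to $3$). I would also note explicitly that Lemma~\ref{lemma-isPolytope} guarantees $P(\Gamma,\ct)$ is a genuine (bounded, full-dimensional) polytope so these descriptions are of bounded regions and the dilate is unambiguous. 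With those remarks in place the verification is the two displayed substitutions above.
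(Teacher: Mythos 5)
Your proposal is correct and is essentially the paper's own argument: both proofs amount to substituting $a = w + \mathbf{2}$ and observing that the triangle inequalities (being homogeneous) and the boundary inequalities $a(\ell)\le 3$ transform exactly into the defining inequalities of $Q(\Gamma,\ct)$, giving both inclusions. Your added remarks on the dilation of an inequality description and on nonnegativity being implied by the triangle inequalities are accurate but not needed beyond what the paper does.
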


\begin{proof}
Let $\mathbf{2} = (2,\dotsc,2)^T \in \Rr^{E(\Gamma)}$. Let $w \in Q(\Gamma,\ct)$. Then we see that $w + \mathbf{2}$ satisfies the following inequalities:
\begin{enumerate}
    \item For any edges $e, f, g$ which share a common vertex, we have
    $$ \frac{(w(e) + 2) + (w(f) + 2) - (w(g) + 2)}{2} \geq 0, $$
    $$ \frac{(w(e) + 2) - (w(f) + 2) + (w(g) + 2)}{2} \geq 0, $$
    $$ \frac{-(w(e) + 2) + (w(f) + 2) + (w(g) + 2)}{2} \geq 0. $$
    
    \item For any edge $\ell \in F(\Gamma, \ct)$, we have
    $$ -(w(\ell) + 2) \geq -3. $$
\end{enumerate}
It follows that $Q(\Gamma, \ct) + \mathbf{2} \subseteq 3 P(\Gamma, \ct)$. Similarly, if $v \in 3 P(\Gamma, \ct)$, then $v - \mathbf{2} \in Q(\Gamma, \ct)$; hence, we have equality.
\end{proof}

The main result of this section is that $Q(\Gamma,\ct)$ is reflexive when $F(\Gamma,\ct)$ is a collection of $g$ loops. The proof is by induction on the first Betti number of the graph. Thus, we begin by showing the dumbbell graph in Figure \ref{fig:dumbell} yields a reflexive polytope. For this, it is enough to argue that $Q(\Gamma_1,\ct)$ is a lattice polytope with respect to $M_{\Gamma_1}$.

\begin{figure}
    \centering
    \subfloat[\label{fig:dumbell}]{
        \begin{tikzpicture}
    
        \coordinate (V) at (-0.5,0);
        \coordinate (W) at (0.5,0);
    
        \draw [thick] (W) -- (V);
        \draw [thick] (-1,0) circle [radius = 0.5];
        \draw [thick] (1,0) circle [radius = 0.5];
        
        \draw [fill] (V) circle [radius = 0.07];
        \draw [fill] (W) circle [radius = 0.07];
        \end{tikzpicture}
    }\hspace{1in}
    \subfloat[\label{fig:beachball}]{
        \centering
        \begin{tikzpicture}
    
        \coordinate (V) at (-1,0);
        \coordinate (W) at (1,0);
        
        \draw [thick] (W) -- (V);
        \draw [thick] (0,0) circle [radius = 1];
        
        \draw [fill] (V) circle [radius = 0.07];
        \draw [fill] (W) circle [radius = 0.07];
        \end{tikzpicture}
    } 
     \hfill
    \caption{The two trivalent genus 2 graphs $\Gamma_1$ and $\Gamma_2$, respectively.}
    \label{fig:genus2graphs}
    
\end{figure}
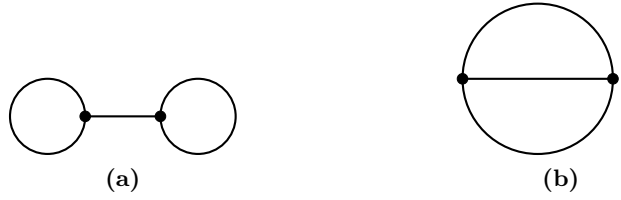

We denote the two loops by $\ell_1$ and $\ell_2$ and the bridge by $e$ and order the standard basis vectors of $\Rr^{E(\Gamma_1)}$ in this way. Then $Q(\Gamma_1, \ct)$ is the set of all $x \in \Rr^3$ satisfying the following inequalities
\[
     \begin{pmatrix}
        1 & 0 & -\frac{1}{2} \\
        0 & 1 & -\frac{1}{2} \\
        0 & 0 & -\frac{1}{2} \\
        -1 & 0 & 0 \\
        0 & -1 & 0
        \end{pmatrix} \begin{pmatrix} x_1 \\ x_2 \\ x_3 \end{pmatrix} \geq \begin{pmatrix} -1 \\ -1 \\ -1 \\ -1 \\ -1 \end{pmatrix}.
\]
The vertices of $Q(\Gamma_1, \ct)$ are the columns of the matrix below.
\[
    \begin{pmatrix}
    -2&1&-2&1&1\\
    -2&-2&1&1&1\\
    -2&-2&-2&-2&4
    \end{pmatrix}
\]
In particular, $Q(\Gamma_1, \ct)$ is a lattice polytope with respect $M_{\Gamma_1} = \Zz \ell_1 \oplus \Zz \ell_2 \oplus 2\Zz e$.

\begin{remark}
We also note that $\Gamma_2$ from Figure \ref{fig:beachball} yields a reflexive polytope. Label the edges $\ell_1,\ell_2,$ and $e$ arbitrarily and set $\ct = e$. Then $Q(\Gamma_2,\ct)$ is the set of all $x \in \Rr^3$ satisfying
\[
          \begin{pmatrix}
          \frac{1}{2} & \frac{1}{2} & -\frac{1}{2} \\
          \frac{1}{2} & -\frac{1}{2} & \frac{1}{2} \\
          -\frac{1}{2} & \frac{1}{2} & \frac{1}{2} \\
          -1 & 0 & 0 \\
          0 & -1 & 0 \\
          \end{pmatrix} 
          \begin{pmatrix}
          x_1 \\ x_2 \\ x_3
          \end{pmatrix} \geq
          \begin{pmatrix}
          -1 \\ -1 \\ -1 \\ -1 \\ -1
          \end{pmatrix}.
\]
The vertices of $Q(\Gamma_2,\ct)$ are the columns in the matrix below.
\[
    \begin{pmatrix}
    -2&1&1&-2&1\\
    -2&1&-2&1&1\\
    -2&-2&1&1&4
    \end{pmatrix}
\]
The column sums are all even; hence, each vertex is in $M_{\Gamma_2}$. 
\end{remark}

We need a lemma concerning certain faces of $Q(\Gamma,\ct)$.

\begin{lemma}\label{lem: faces}
Let $\Gamma$ be a trivalent genus $g\geq 3$ graph with $g$ loops, and let $\ct$ be its unique spanning tree. For $e \in E(\ct)$, then define 
\[
    F_e = 3P(\Gamma,\ct) \cap \{w \in \Rr^{E(\Gamma)} ~|~ w(e) = 0\}.
\]
$F_e$ is a face of $3P(\Gamma,\ct)$, and $F_e$ is isomorphic to 

\[
    3P(\Gamma_1,\ct_1) \times 3P(\Gamma_2,\ct_2).
\]
$\Gamma_1$ and $\Gamma_2$ are the connected components of the graph obtained by deleting $e$ contracting the pairs of edges adjacent to $e$; and $\ct_1$ and $\ct_2$ are their spanning trees, respectively. If $e$ is a leaf of $\ct$ and $\Gamma_1$ is just a single loop, then we take $P(\Gamma_1,\ct_1) = [0,1]$.
\end{lemma}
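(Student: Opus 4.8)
The plan is to realize $F_e$ as the minimal face of a valid linear functional, to produce an explicit linear change of coordinates exhibiting the product structure, and then to check that the defining inequalities of $3P(\Gamma,\ct)$ break up term-by-term into those of the two smaller polytopes. First I would note that every coordinate functional $w\mapsto w(f)$ is nonnegative on $P(\Gamma)$: if $f$ meets edges $f',f''$ at a common vertex, adding the triangle inequalities $w(f')\le w(f)+w(f'')$ and $w(f'')\le w(f)+w(f')$ gives $0\le 2w(f)$. Since $e\in E(\ct)$ and $\Gamma$ is trivalent, $e$ occurs in such a triple, so $w(e)\ge 0$ on $3P(\Gamma,\ct)$; as the origin lies in $P(\Gamma,\ct)\subseteq 3P(\Gamma,\ct)$, the minimum of $w(e)$ there is $0$, and hence $F_e$ is a nonempty face. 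Write $v_1,v_2$ for the endpoints of $e$ and $a_i,b_i$ for the other two edges at $v_i$. Substituting $w(e)=0$ into the triangle inequalities for the triple $\{e,a_i,b_i\}$ at $v_i$ forces $w(a_i)=w(b_i)$, the third inequality $w(e)\le w(a_i)+w(b_i)$ being vacuous; when $v_i$ is a leaf of $\ct$ this triple is $\{e,\ell_i,\ell_i\}$ and forces nothing, which accounts for the degenerate case in the statement. Thus the affine hull of $F_e$ lies in the subspace $W$ cut out by $w(e)=0$ together with the equations $w(a_i)=w(b_i)$ over the non-leaf endpoints.

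Next I would perform the graph surgery. As $e$ is an edge of the spanning tree it is a bridge of $\Gamma$, so deleting it yields two connected components, one meeting $v_1$ and one meeting $v_2$; suppressing the degree-two vertex $v_i$, i.e.\ merging $a_i$ and $b_i$ into a single edge $c_i$, produces a trivalent graph $\Gamma_i$ with unique spanning tree $\ct_i$ coming from the corresponding subtree of $\ct$. One checks routinely that each $\Gamma_i$ is again of the prescribed form---a trivalent tree with a loop at every leaf---that the first Betti numbers of $\Gamma_1$ and $\Gamma_2$ add to $g$, and that the loops of $\Gamma$ are partitioned among $\Gamma_1,\Gamma_2$ (no $a_i$ or $b_i$ is a loop unless $v_i$ is a leaf). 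Identifying $E(\Gamma_i)$ with $\{c_i\}$ together with the edges of $\Gamma$ on the $v_i$-side, the assignment $w\mapsto (w|_{\Gamma_1},w|_{\Gamma_2})$ with $w(c_i):=w(a_i)=w(b_i)$ is a linear isomorphism from $W$ onto $\Rr^{E(\Gamma_1)}\oplus\Rr^{E(\Gamma_2)}$; the dimensions agree since $\dim W=(3g-3)-\#\{\text{non-leaf endpoints of }e\}$. It also respects the lattices of Lemma \ref{lem-lattice}: $a_i,b_i,c_i$ are spanning-tree edges carrying the same ``even'' condition on both sides, while loop coordinates are unconstrained on both sides.

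Finally I would match the inequalities. The only triangle inequalities at $v_1$ and $v_2$ are those for the triples $\{e,a_i,b_i\}$, already absorbed into $W$; at every other vertex $u$ the three incident edges lie entirely on one side, and after the substitutions $a_i,b_i\mapsto c_i$ the triangle inequalities at $u$ become exactly the triangle inequalities at the corresponding vertex of $\Gamma_1$ or $\Gamma_2$. The boundary inequalities $w(\ell)\le 3$ over loops $\ell$ of $\Gamma$ distribute into the boundary inequalities of $3P(\Gamma_1,\ct_1)$ and of $3P(\Gamma_2,\ct_2)$. Therefore the isomorphism above carries $F_e=3P(\Gamma,\ct)\cap W$ onto $3P(\Gamma_1,\ct_1)\times 3P(\Gamma_2,\ct_2)$; when $v_1$ is a leaf the first factor degenerates to the single pair of inequalities $0\le w(\ell_1)\le 3$, i.e.\ $3[0,1]$, matching the convention in the statement.

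The step I expect to be the main obstacle is the bookkeeping in the graph surgery---specifying precisely which edges merge, tracking where each loop and each spanning-tree edge lands, and verifying that $\Gamma_i$ stays in the prescribed family---carried out carefully enough that the matching of the two lists of defining inequalities in the last step is genuinely term-by-term, together with a clean separation of the leaf and non-leaf endpoint cases so that the codimension of $F_e$ and the degenerate factor $3[0,1]$ come out right.
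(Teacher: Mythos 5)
Your proposal is correct and follows essentially the same route as the paper: exhibit $F_e$ as a face (the paper intersects with the two supporting hyperplanes $w(e)\pm(w(f)-w(h))=0$, you minimize the nonnegative functional $w\mapsto w(e)$, which is equivalent), then use that $w(e)=0$ forces the two other edges at each non-leaf endpoint of $e$ to carry equal values, merge each such pair into one edge, and match the remaining triangle and boundary inequalities term by term, with the leaf endpoint giving the degenerate factor $[0,3]=3[0,1]$. The only blemish is the parenthetical dimension count, which should read $\dim W=(3g-3)-1-\#\{\text{non-leaf endpoints of }e\}$ (you dropped the codimension coming from $w(e)=0$ itself); this is inessential, since your coordinate map is visibly bijective onto $\Rr^{E(\Gamma_1)}\oplus\Rr^{E(\Gamma_2)}$ with the inverse given by setting $w(e)=0$ and $w(a_i)=w(b_i)=w'(c_i)$.
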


\begin{proof}
$F_e$ is a face since it is the intersection of $3P(\Gamma,\ct)$ with the two supporting hyperplanes $\{w ~|~ w(e) + w(f) - w(h) = 0\}$ and $\{w ~|~ w(e) - w(f) + w(h) = 0\}$ where $e,f,$ and $h$ are edges meeting at a vertex.

Suppose $e \in E(\ct)$ is a leaf adjacent to a loop $\ell$, and let $w \in F_e$. The triangle and boundary inequalities imply that $w(\ell) \in [0,3]$. The other pair of edges $f,h$ adjacent to $e$ must lie in $E(\ct)$ and since $w(e) = 0$, we see that $w(f) = w(h)$. Thus, any point $w$ in $F_e$ gives a point in $w' \in [0,3] \times 3P(\Gamma_2,\ct_2)$ by setting $w'(fh) = w(f) = w(h)$ and $w'(k) = w(k)$ for all other edges. On the other hand, a point $w' \in [0,3] \times 3P(\Gamma_2,\ct_2)$ yields a point in $w \in 3P(\Gamma, \ct)$ by setting $w(\ell)$ to be whatever was in the first component, $w(e) = 0$, $w(f) = w(h) = w'(fh)$, and $w(k) = w'(k)$ for all other edges. Thus, the claim holds when $e \in E(\ct)$ is a leaf.

Suppose $e \in E(\ct)$ is not a leaf, and it is adjacet to two pairs of edges $f,h$ and $f',h'$ all in $E(\ct)$. Again, we have that $w(f) = w(h)$ and $w(f') = w(h')$. The map which sends $w \in F_e$ to $w' \in 3P(\Gamma_1,\ct_1) \times 3P(\Gamma_2,\ct_2)$ defined by $w'(fh) = w(f) = w(h)$ and $w'(f'h') = w'(f') = w(h')$ and $w'(k) = w(k)$ for all other edges $k$ is well-defined and bijective, so the claim holds in this case as well.
\end{proof}

\begin{theorem}\label{thm:reflexive}
$Q(\Gamma,\ct)$ is reflexive whenever the pair $\Gamma, \ct$ is such that $F(\Gamma,\ct)$ is a collection of $g$ loops.
\end{theorem}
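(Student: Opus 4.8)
The plan is to reduce reflexivity of $Q(\Gamma,\ct)$ to a lattice-point statement about the vertices of $3P(\Gamma,\ct)$, and then prove that statement by induction on $g=b_1(\Gamma)$, using Lemma \ref{lem: faces} for the inductive step and the explicit dumbbell computation for the base case. Concretely: since $Q(\Gamma,\ct)+\mathbf 2=3P(\Gamma,\ct)$ with $\mathbf 2=(2,\dots,2)\in M_\Gamma$ by Lemma \ref{lem-lattice}, translation by $\mathbf 2$ is a lattice isomorphism; and scaling the point $(2/3,\dots,2/3)$ from Lemma \ref{lemma-isPolytope} shows $(2,\dots,2)\in\operatorname{int}3P(\Gamma,\ct)$, so $0\in\operatorname{int}Q(\Gamma,\ct)$. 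By the Remark following the definition of $Q(\Gamma,\ct)$, its polar dual is already a lattice polytope with respect to $N_\Gamma$. Hence $Q(\Gamma,\ct)$ is reflexive as soon as $3P(\Gamma,\ct)$ is a lattice polytope for $M_\Gamma=\Zz^{F(\Gamma,\ct)}\oplus(2\Zz)^{E(\ct)}$, i.e.\ as soon as every vertex of $3P(\Gamma,\ct)$ lies in $M_\Gamma$.

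I would prove this by induction on $g$. The base case $g=2$ is the dumbbell $\Gamma_1$, for which the vertices of $3P(\Gamma_1,\ct)=Q(\Gamma_1,\ct)+\mathbf 2$ were listed explicitly above and visibly lie in $M_{\Gamma_1}$; it is convenient to also record that a single loop contributes a factor $[0,3]$ whose vertices $\{0,3\}$ lie in $\Zz$. For $g\ge 3$, let $v$ be a vertex of $3P(\Gamma,\ct)$. If $v(e)=0$ for some $e\in E(\ct)$, then $v$ lies on the face $F_e$ of Lemma \ref{lem: faces}, hence is a vertex of $F_e\cong 3P(\Gamma_1,\ct_1)\times 3P(\Gamma_2,\ct_2)$ (with a $[0,3]$ factor standing in for a genus-one piece). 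Both factors have strictly smaller first Betti number, so by induction $v$ corresponds to a pair of lattice points, and one checks directly through the isomorphism of Lemma \ref{lem: faces} that $v\in M_\Gamma$: the loop coordinates of $v$ are loop coordinates of the factors or $[0,3]$-coordinates (hence in $\Zz$), $v(e)=0\in 2\Zz$, the two contracted pairs at the ends of $e$ satisfy $v(f)=v(h)=v'(fh)\in 2\Zz$ and $v(f')=v(h')=v'(f'h')\in 2\Zz$, and all remaining coordinates are unchanged.

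The remaining case is $v(e)>0$ for all $e\in E(\ct)$; this also forces $v(\ell)>0$ for every loop $\ell$, since $v(\ell)=0$ would give $v(e)\le 2v(\ell)=0$ at the adjacent leaf. Here the assertion I want is that the only vertex of $3P(\Gamma,\ct)$ with all coordinates strictly positive is the ``top vertex'' $T$ with $T(\ell)=3$ for all loops and $T(e)=6m_e$ for each tree edge $e$, where $m_e$ is the number of leaves of $\ct$ on the smaller side of $e$; granting this, $T\in M_\Gamma$ is clear. To prove it I would run a rigidity argument: at a leaf $w_i$ with loop $\ell_i$ and tree edge $e_i$ the pair $(v(\ell_i),v(e_i))$ lies in the planar region $\{0\le y\le 2x\le 6\}$, and unless it is the corner $(3,6)$ it lies on a relatively open edge of that region, so $v$ can be a vertex only if a triangle inequality at the interior endpoint of $e_i$ supplies the missing constraint; a count over the $g-2$ interior vertices of $\ct$, each contributing at most one tight triangle inequality (two of them being tight at one vertex would force a tree-edge coordinate to vanish, and a pair of leaves sharing an interior vertex already exhausts its budget), shows there is not enough rigidity to keep any leaf off the corner $(3,6)$. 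Once $v(\ell_i)=3$ and $v(e_i)=6$ for all $i$, the interior tree-edge coordinates form a vertex of the polytope cut from $\Rr^{E(\ct)\sm\{\text{leaf edges}\}}$ by the interior triangle inequalities with leaf edges fixed at $6$; processing $\ct$ inward from its leaves, each interior vertex's unique tight triangle inequality expresses one still-undetermined coordinate as a $\pm1$-combination of already-even quantities, so $v=T$. Assembling the three cases, every vertex of $3P(\Gamma,\ct)$ lies in $M_\Gamma$, which completes the induction and the proof.

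The step I expect to be the main obstacle is the last one: ruling out ``mixed'' positive vertices in which some $v(\ell_i)<3$ while $v(e_i)=2v(\ell_i)<6$ is pinned through an interior vertex. The local perturbation analysis shows each such configuration either fails to be a vertex or is infeasible because $v(e_i)\le 2v(\ell_i)<6$ over-constrains the neighboring triangle inequalities, but turning this into a clean global count over the tree $\ct$ — and correctly bookkeeping the interaction when two leaves meet at a common interior vertex — is the delicate part.
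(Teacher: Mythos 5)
Your reduction of reflexivity to the statement that every vertex of $3P(\Gamma,\ct)$ lies in $M_\Gamma$, and your treatment of vertices with $v(e)=0$ for some tree edge via Lemma \ref{lem: faces} and induction, coincide with the paper's argument. The gap is in the all-positive case: your key assertion that the only vertex of $3P(\Gamma,\ct)$ with strictly positive coordinates is the single ``top vertex'' $T$ with $T(\ell)=3$ and $T(e)=6m_e$ is false. Take $g=5$ and let $\ct$ be the caterpillar tree with leaf edges $e_1,\dots,e_5$ (carrying loops $\ell_1,\dots,\ell_5$) and interior edges $f_1,f_2$, the interior vertices being incident to $\{e_1,e_2,f_1\}$, $\{f_1,e_3,f_2\}$, $\{f_2,e_4,e_5\}$. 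The point $a$ with $a(\ell_i)=3$ and $a(e_i)=6$ for all $i$, $a(f_2)=12$, $a(f_1)=6$ satisfies all triangle and boundary inequalities and is the unique solution of the $12=3g-3$ tight equalities $a(\ell_i)=3$, $a(e_i)=2a(\ell_i)$, $a(f_2)=a(e_4)+a(e_5)$, $a(f_2)=a(f_1)+a(e_3)$; hence it is a vertex with all coordinates positive, yet $a(f_1)=6\neq 12=6m_{f_1}$. So the rigidity count you propose is aimed at a false statement, and, as you yourself anticipate, the sketched count is not a proof in any case.

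What is true, and what the paper proves instead, is weaker but sufficient: if no coordinate of a vertex $a$ vanishes, then at each of the $2g-2$ vertices of $\Gamma$ at most one triangle inequality can be tight (two tight ones force a zero coordinate), so among the at most $(2g-2)+g=3g-2$ available constraints at least $3g-3$ must be tight. Hence either every vertex of $\Gamma$ carries exactly one tight triangle equality and all but at most one loop has $a(\ell)=3$, or all loops have $a(\ell)=3$ and exactly one vertex of $\Gamma$ has no tight triangle equality. In either subcase one does not classify $a$; one only propagates from the leaves inward, using the tight equality at each vertex of $\Gamma$ to express the undetermined edge value as a sum or difference of already determined even values, concluding $a\in\Zz^{E(\Gamma)}$ with all tree-edge values even, and then checks the parity $a(e)+a(f)+a(h)\in 2\Zz$ at every vertex, the single exceptional vertex being handled separately. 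To repair your argument you would need to replace the uniqueness claim for positive vertices with an integrality-and-parity statement of this kind.
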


\begin{proof}
Let $\Gamma$ be a graph with first Betti number $g\geq 3$ consisting of a spanning tree $\ct$ and $g$ loops. It is enough to show that $Q(\Gamma, \ct) + \mathbf{2} = 3P(\Gamma, \ct)$ is a lattice polytope with respect to $M_\Gamma$. Recall that for any $w \in 3P(\Gamma,\ct)$ we have the triangle inequalities for each $e, f, h$ sharing a vertex:
\begin{align*}
    w(e) + w(f) - w(h) &\geq 0, \\
    w(e) - w(f) + w(h) &\geq 0, \\
    -w(e) + w(f) + w(h) &\geq 0,
\end{align*}
and we have boundary inequalities for each $\ell \in F(\Gamma,\ct)$:
\[ w(\ell) \leq 3. \]

Let $a$ be a vertex of $3P(\Gamma,\ct)$. The dimension of $3P(\Gamma,\ct)$ is $3g-3$; thus, $a$ is the intersection of at least $3g-3$ supporting hyperplanes. First, suppose there are three edges $e, f, h$ meeting at a vertex so that at least two of the three triangle inequalities are equalities on $a$, i.e.
\[
    a(e) + a(f) = a(h) \text{ and } a(e) + a(h) = a(f).
\]
This with the third triangle inequality imply that $a(e) = 0$ and $a(f) = a(h)$. We can assume that $e \in E(\ct)$, because if say $e=f$ were a loop and $a(e) = 0$, then we would have that $a(h) = 0$ with $h\in E(\ct)$, and we could work with $h$ instead. Moreover, if $f'$ and $h'$ are the (possibly not distinct) edges which are also adjacent to $e$, then the facts that $a(e)=0$ and the triangle inequalities for $e,f',h'$ imply that $a(f') = a(h')$.

\begin{center}
    \begin{tikzpicture}[scale = 1, thick]
        \draw (0,0) -- (1,1);
        \draw (1,1) -- (0,2);
        \draw (1,1) -- (2.5,1);
        \draw (2.5,1) -- (3.5,2);
        \draw (2.5,1) -- (3.5,0);
            
        \draw (1.75,1) node[above]{$0$};
            
        \draw (0,0) node[left]{$h$};
        \draw (0,2) node[left]{$f$};
        \draw (3.5,2) node[right]{$h'$};
        \draw (3.5,0) node[right]{$f'$};
    \end{tikzpicture}
\end{center}

Now, consider the graph $\Gamma'$ obtained by deleting $e$ and concatenating the pairs of edges $f,h$ and $f',h'$. Note that $\Gamma'$ has two connected components both of which are trees with loops attached at each leaf, and their first Betti numbers are strictly smaller than $g$.

Let $a'$ be the induced labelling on $\Gamma'$ via the map from Lemma \ref{lem: faces}. By Lemma \ref{lem: faces}, we have that $a'$ is a vertex of $3P(\Gamma_1,\ct_1) \times 3P(\Gamma_2,\ct_2)$. By the induction hypothesis, this is a lattice polytope with respect to $M_{\Gamma_1} \times M_{\Gamma_2}$ (if $\Gamma_1$ is a loop then we take $M_{\Gamma_1}$ to be $\Zz$). Thus, in order to see that $a \in M_\Gamma$, we only need to check that $a(e) + a(f) + a(h) \in 2\Zz$ and $a(e) + a(f') + a(h') \in 2\Zz$. This follows since $a'(fh),a'(f'h') \in \Zz$ and since $a'(fh) = a(f) = a(h)$ and $a'(f'h') = a(f') = a(h')$.

Next, we consider the case when $a$ is the intersection of at least $3g-3$ supporting hyperplanes but for each triple of edges $e, f, h$ sharing a common vertex at most one of the triangle inequalities is an equality on $a$. In other words, there is no $e$ so that $a(e) = 0$. Since $\Gamma$ has $2g-2$ vertices, there are two possibilities:
\begin{enumerate}
    \item there is one triangle equality for every triple $e, f, h$ meeting at a vertex, and $a(\ell) = 3$ for all but possibly one $\ell \in F(\Gamma, \ct)$, or
    \item $a(\ell) = 3$ for all $\ell \in F(\Gamma, \ct)$, and there is a single triangle equality for all but exactly one triple $e, f, h$ meeting at a vertex,
\end{enumerate}

In case (1), we can see that for any edge $a(e)$ is either the sum or difference of $a(f)$ and $a(h)$ where $e,f,h$ meet at a vertex. Start at a leaf of $\ct$. This leaf is adjacent to a loop in $F(\Gamma, \ct)$. Since the genus is at least 3, we can assume that this edge satisfies $a(\ell) = 3$. Using the triangle equality at this node, we see the value on the leaf is 6 which is even. Continuing in this fashion, we can solve for values on all edges in $e$ in $\Gamma$, and we see that $a \in \Zz^{E(\Gamma)}$, and in fact, we can see that $a(e) \in 2\Zz$ for all edges in $\ct$. To see $a \in M_\Gamma$, consider a triple of edges sharing a vertex $e,f,h$. Without loss of generality, we can assume $a(e) = a(f) + a(h)$, so
\[ a(e) + a(f) + a(h) = 2a(e) \in 2\Zz. \]
It follows that $a \in M_\Gamma$.

In case (2), the same argument works to show that $a \in \Zz^{E(\Gamma)}$ and that $a(e) + a(f) + a(h) \in 2\Zz$ for every triple meeting at a vertex except one. Call this triple $e',f',h'$. We must show that $a(e') + a(f') + a(h') \in 2\Zz$. This follows since these are either all tree edges in which case, they are all even, or $e' = f'$ is a loop and $h'$ is a leaf in $\ct$ in which case $6 + a(h') \in 2\Zz$ as $a(h')$ is even.
\end{proof}

We end this section with a discussion about the hypotheses in Theorem \ref{thm:reflexive}. As shown earlier, the graph from Figure \ref{fig:beachball} does not satisfy the conditions of the theorem; however, $Q(\Gamma_2,\ct)$ is still reflexive. As we explain in the following example, $Q(K_4,\ct)$ is reflexive if and only if $\ct$ is trivalent.  

\begin{example}\label{ex: K4}
Consider the pair $K_4, \ct_1$ from Figure \ref{fig:reflexivek4}, so $\ct_1$ is the set of solid edges in \ref{fig:reflexivek4}. We label the edges in $F(K_4,\ct_1)$ $\ell_1,\ell_2,\ell_3$ clockwise starting at the top edge, and we label the tree edges $e_4,e_5,e_6$ starting at the edge adjacent to $\ell_1$ and $\ell_2$ and proceeding clockwise. Then $Q(K_4,\ct_1)$ is the defined by the four sets of triangle inequalities as well as the boundary inequalities $w(\ell_i) \leq 1$ for $i = 1,2,3$. The vertices of $Q(K_4, \ct_1)$ can be computed and are the columns in the matrix below. The rows (from top to bottom) correspond to the edges $\ell_1,\ell_2,\ell_3,e_4,e_5,$ and $e_6$, respectively.
\[
    \left(\begin{smallmatrix}
    -2 & 1 &-2 & 1 & 1 & 1& -2 & 1 & -2 & 1 & -2 & 1 & 1 & 1 & 1\\
    -2 & 1 & 1 & -2 & 1 & -2 & 1 & 1 & -2 & 1 & 1 & 1 & -2 & 1 & 1\\
    -2 & 1 & -2 & 1 & 1 & -2 & 1 & -2 & 1 & -2 & 1 & 1 & 1 & 1 & 1\\
    -2 & -2 & 1 & 1 & 4 & 1 & 1 & -2 & -2 & 4& 1 & 4 & 1 & -2 & 4\\
    -2 & -2 & 1 & 1 & 4 & -2 & -2 & 1 & 1 & 1 & 4 & -2 & 1 & 4 & 4\\
    -2 & -2 & -2 & -2 &-2 & 1 & 1 & 1 & 1 & 1 & 1 & 4 & 4 & 4 & 4
    \end{smallmatrix}\right)
\]
One can check that each vertex lies in $M_{K_4}$ by multiplying the matrix above by the matrix $M$ below and checking that all entries are indeed even.
\[
    M := \left(\begin{smallmatrix}
    1 & 1 & 0 & 1 & 0 & 0 \\
    0 & 1 & 1 & 0 & 1 & 0 \\
    1 & 0 & 1 & 0 & 0 & 1 \\
    0 & 0 & 0 & 1 & 1 & 1
    \end{smallmatrix}\right)
\]

Now, consider $Q(K_4, \ct_2)$ where $\ct_2$ comprises of the edges $\ell_2, e_4,$ and $e_6$. The polytope is defined in a similar way: it has the same twelve triangle inequalities, but the boundary inequalities are $w(\ell_1) \leq 1$, $w(\ell_3) \leq 1$, and $w(e_5) \leq 1$. The vertices of $Q(K_4, \ct_2)$ are the columns in the matrix below.
\[
    \left(\begin{smallmatrix}
    -2&1&-2&1&1&1&-2&1&-2&\color{red}{1}&1&-2&1&1&1&1\\
    -2&1&1&-2&4&-2&1&1&-2&\color{red}{1}&1&4&1&-2&4&4\\
    -2&1&-2&1&1&-2&1&-2&1&\color{red}{1}&-2&1&1&1&1&1\\
    -2&-2&1&1&1&1&1&-2&-2&\color{red}{-2}&4&4&4&1&1&7\\
    -2&-2&1&1&1&-2&-2&1&1&\color{red}{1}&1&1&-2&1&1&1\\
    -2&-2&-2&-2&-2&1&1&1&1&\color{red}{1}&1&1&4&4&4&4
    \end{smallmatrix}\right)
\]
Using the matrix $M$, we can check that all the vertices except the one colored in red is in $M_{K_4}$. Indeed, the following computation shows this vertex colored in red is not in $M_{K_4}$.
\[
    M
    \left(\begin{smallmatrix}
    1 \\ 1 \\ 1 \\ -2 \\ 1 \\ 1
    \end{smallmatrix}\right)
    = \left(\begin{smallmatrix}
    0 \\ 3 \\ 3 \\ 0
    \end{smallmatrix}\right)
\]

\end{example}

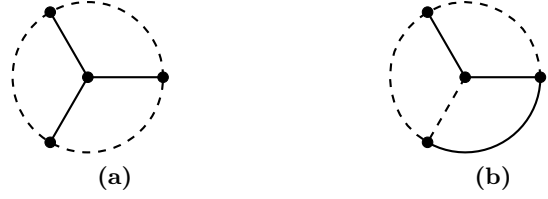
\begin{figure}
    \centering
    \subfloat[\label{fig:reflexivek4}]{
        \begin{tikzpicture}
    
        \coordinate (V1) at (0,0);
        \coordinate (V2) at (1,0);
        \coordinate (V3) at ({cos(120)}, {sin(120)});
        \coordinate (V4) at ({cos(240)}, {sin(240)});
    
        \draw [thick] (V1) -- (V2);
        \draw [thick] (V1) -- (V3);
        \draw [thick] (V1) -- (V4);
        \draw [thick, dashed] (V1) circle [radius = 1];
        
        \draw [fill] (V1) circle [radius = 0.07];
        \draw [fill] (V2) circle [radius = 0.07];
        \draw [fill] (V3) circle [radius = 0.07];
        \draw [fill] (V4) circle [radius = 0.07];
        
        \end{tikzpicture}
    }\hspace{1in}
    \subfloat[\label{fig:notreflexivek4}]{
        \centering
        \begin{tikzpicture}
    
        \coordinate (V1) at (0,0);
        \coordinate (V2) at (1,0);
        \coordinate (V3) at ({cos(120)}, {sin(120)});
        \coordinate (V4) at ({cos(240)}, {sin(240)});
    
        \draw [thick] (V1) -- (V2);
        \draw [thick] (V1) -- (V3);
        \draw [dashed, thick] (V1) -- (V4);
        
        \draw [dashed, thick, domain = 0:240] (V1) plot ({cos(\x)}, {sin(\x)});
        \draw [thick, domain = 240:360] (V1) plot ({cos(\x)}, {sin(\x)});
        
        \draw [fill] (V1) circle [radius = 0.07];
        \draw [fill] (V2) circle [radius = 0.07];
        \draw [fill] (V3) circle [radius = 0.07];
        \draw [fill] (V4) circle [radius = 0.07];
        \end{tikzpicture}
    } 
     \hfill
    \caption{Up to isomorphism, the pair $K_4, \ct$ is one of the two graphs above. In each the dashed edges are the edges in $F(K_4,\ct)$ and the solid edges are $\ct$.}
    \label{fig:K4differentSpanningTrees}
\end{figure}

As Example \ref{ex: K4} suggests, deciding whether $Q(\Gamma,\ct)$ is reflexive or not is very sensitive with respect to both $\Gamma$ and choice of spanning tree $\ct$. After some reflection, one can see that the inductive step in the proof of Lemma \ref{lem: faces} fails for general pairs $\Gamma, \ct$. Indeed, when contracting edges, you may contract an edge $\ell$ that lies in $F(\Gamma,\ct)$ with a tree edge $e$. Then the boundary condition on $\ell$ imposes an extra condition, so one is left not with $Q(\Gamma_1,\ct_1) \times Q(\Gamma_2,\ct_2)$ but rather this polytope with an extra boundary condition on an edge in the tree. This does not happen for the class of graphs considered in Lemma \ref{lem: faces} since we are only every contracting tree edges. As of yet, we are unsure what pairs $\Gamma,\ct$ produce a reflexive polytope $Q(\Gamma,\ct)$, and so we ask the following question.

\begin{question}\label{question: when reflexive?}
For what pairs $\Gamma,\ct$ is $Q(\Gamma,\ct)$ reflexive?
\end{question}

We can answer Question \ref{question: when reflexive?} when the genus is 2 or 3, and we have an obstruction on $\Gamma,\ct$ for $Q(\Gamma,\ct)$ being reflexive when $g \geq 4$.

\begin{proposition}
Both trivalent graphs with first Betti number 2 yield reflexive polytopes. If $\Gamma$ has first Betti number 3, then $Q(\Gamma,\ct)$ is reflexive if and only if $\Gamma,\ct$ is one of the graphs in Figure \ref{fig: reflexive genus 3}.
\end{proposition}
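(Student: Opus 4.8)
The plan is to reduce the statement to a finite computation, most of which is already in place. Recall first that, as noted in the remark following the definition of $Q(\Gamma,\ct)$, the polar dual $Q(\Gamma,\ct)^\circ$ is automatically a lattice polytope with respect to $N_\Gamma$, and $0$ lies in the interior of $Q(\Gamma,\ct)$; hence $Q(\Gamma,\ct)$ is reflexive if and only if it is itself a lattice polytope, that is, if and only if every vertex of $3P(\Gamma,\ct)=Q(\Gamma,\ct)+\mathbf{2}$ lies in $M_\Gamma$. This is the test we run in every case, exactly as in the proof of Theorem~\ref{thm:reflexive} and in Example~\ref{ex: K4}. For first Betti number $2$ there are precisely two trivalent graphs, the dumbbell $\Gamma_1$ and the theta graph $\Gamma_2$ of Figure~\ref{fig:genus2graphs}, and for each the spanning tree is unique up to the automorphisms of the graph; the vertices of $Q(\Gamma_1,\ct)$ and of $Q(\Gamma_2,\ct)$ were written out explicitly above and were seen to lie in $M_{\Gamma_1}$, respectively $M_{\Gamma_2}$ (for $\Gamma_2$ each vertex has even coordinate sum). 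This settles the first sentence.

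For first Betti number $3$ one has $|V(\Gamma)|=4$ and $|E(\Gamma)|=6$, and a short census shows there are exactly five trivalent graphs: $K_4$; the graph with two vertex-disjoint double edges joined by two further edges; and three graphs carrying respectively one, two, and three loops, the last of these being a trivalent tree with a loop attached at each leaf. I would then list, for each such $\Gamma$, the spanning trees up to $\Aut(\Gamma)$: for the graphs with two or three loops it is unique; for $K_4$ and for the one-loop graph there are two types, a star $K_{1,3}$ and a path (as in Example~\ref{ex: K4}); and for the two-double-edge graph there are two types, neither of them trivalent. This gives a short explicit list of pairs $(\Gamma,\ct)$.

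For each pair on the list I would decide reflexivity. If $F(\Gamma,\ct)$ is a collection of loops --- which for $g=3$ occurs exactly for the three-loop graph with its unique spanning tree --- then $Q(\Gamma,\ct)$ is reflexive by Theorem~\ref{thm:reflexive}. In every other case one computes the vertices of the $6$-dimensional polytope $3P(\Gamma,\ct)$ by solving the linear systems cut out by the triangle and boundary inequalities, just as for $K_4$ in Example~\ref{ex: K4} (where already about fifteen vertices appear), and tests each vertex against $M_\Gamma$ using the ``even sum at every vertex'' description of $M_\Gamma$ (equivalently, by applying an incidence matrix, as with the matrix $M$ of Example~\ref{ex: K4}); by the criterion above $Q(\Gamma,\ct)$ is reflexive precisely when all vertices pass. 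Carrying this out, the surviving pairs are exactly $(K_4,K_{1,3})$, the one-loop graph with its $K_{1,3}$ spanning tree, and the three-loop graph with its spanning tree, which are the pairs drawn in Figure~\ref{fig: reflexive genus 3}; as for $K_4$, these are precisely the pairs for which $\ct$ is a trivalent tree. For each remaining pair one exhibits a single vertex of $3P(\Gamma,\ct)$ outside $M_\Gamma$ --- for the two-loop graph, for instance, there is a vertex with a coordinate equal to $3/2$, so $3P(\Gamma,\ct)$ is not even integral --- which rules out reflexivity.

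The argument is wholly finite, and the main obstacle is just bookkeeping: making the census of graphs and of spanning trees up to isomorphism genuinely exhaustive, and then carrying out the vertex enumerations, since $3P(\Gamma,\ct)$ is $6$-dimensional with many vertices and, as Example~\ref{ex: K4} shows, one misplaced vertex already destroys reflexivity, so no case can be omitted. These steps are easily verified by machine. The only conceptual inputs are the reduction of reflexivity to the lattice-polytope condition via primitivity of the facet normals of $Q(\Gamma,\ct)$, and Theorem~\ref{thm:reflexive} for the three-loop graph.
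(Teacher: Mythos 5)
Your proposal is correct and takes essentially the same approach as the paper, which states this proposition without a separate written proof and relies on exactly the finite verification you describe: reduce reflexivity to $3P(\Gamma,\ct)=Q(\Gamma,\ct)+\mathbf{2}$ being a lattice polytope with respect to $M_\Gamma$ (as in Theorem \ref{thm:reflexive} and Example \ref{ex: K4}), then check vertices case by case over the trivalent genus-$3$ graphs and their spanning trees up to isomorphism. Your census (five graphs, with the star/path distinction for $K_4$ and the one-loop graph) is accurate, the genus-$2$ cases are precisely the explicit vertex computations already displayed in the paper, and the excluded cases are handled the same way the paper handles $K_4$ with the path tree (the two-loop case being also an instance of Proposition \ref{prop: reflexive obstruction}).
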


\begin{figure}
    \subfloat[\label{subfig:rattle}]{
    \begin{tikzpicture}
        \coordinate (V1) at (-0.5,0);
        \coordinate (V2) at (0.5,0);
        \coordinate (V3) at (1.25,0.75);
        \coordinate (V4) at (1.25, -0.75);
    
        \draw [fill] (V1) circle [radius = 0.07];
        \draw [fill] (V2) circle [radius = 0.07];
        \draw [fill] (V3) circle [radius = 0.07];
        \draw [fill] (V4) circle [radius = 0.07];
    
        \draw[thick] (V1) -- (V2);
        \draw[thick,dashed] (V3) -- (V4);
        \draw[thick, dashed] (-1,0) circle [radius = 0.5];
        \draw [thick, domain = 90:270] (V1) plot ({0.75*cos(\x) + 1.25}, {0.75*sin(\x)});
        \draw [thick, dashed, domain = -90:90] (V1) plot ({0.75*cos(\x) + 1.25}, {0.75*sin(\x)});
    \end{tikzpicture}
    } \hfill \subfloat[\label{subfig:treewithloops}]{
    \begin{tikzpicture}
        \coordinate (V1) at (-1,0);
        \coordinate (V2) at (1,0);
        \coordinate (V3) at (0,1);
        \coordinate (V4) at (0,0);
    
        \draw [fill] (V1) circle [radius = 0.07];
        \draw [fill] (V2) circle [radius = 0.07];
        \draw [fill] (V3) circle [radius = 0.07];
        \draw [fill] (V4) circle [radius = 0.07];
    
        \draw[thick] (V1) -- (V2);
        \draw[thick] (V3) -- (V4);
        \draw[thick,dashed] (-1.5,0) circle [radius = 0.5];
        \draw[thick,dashed] (1.5,0) circle [radius = 0.5];
        \draw[thick,dashed] (0,1.5) circle [radius = 0.5];
    \end{tikzpicture}
    } \hfill \subfloat[\label{subfig:K4}]{
    \begin{tikzpicture}
        \coordinate (V1) at (0,0);
        \coordinate (V2) at (1,0);
        \coordinate (V3) at ({cos(120)}, {sin(120)});
        \coordinate (V4) at ({cos(240)}, {sin(240)});
    
        \draw [thick] (V1) -- (V2);
        \draw [thick] (V1) -- (V3);
        \draw [thick] (V1) -- (V4);
        \draw [thick, dashed] (V1) circle [radius = 1];
        
        \draw [fill] (V1) circle [radius = 0.07];
        \draw [fill] (V2) circle [radius = 0.07];
        \draw [fill] (V3) circle [radius = 0.07];
        \draw [fill] (V4) circle [radius = 0.07];
    \end{tikzpicture}
    }
    \caption{Up to isomorphism, these are the three pairs $\Gamma,\ct$ of genus 3 graphs where $Q(\Gamma,\ct)$ is reflexive. The edges in each spanning tree are marked with solid edges, while the edges in $F(\Gamma,\ct)$ are dashed.}
    \label{fig: reflexive genus 3}
\end{figure}
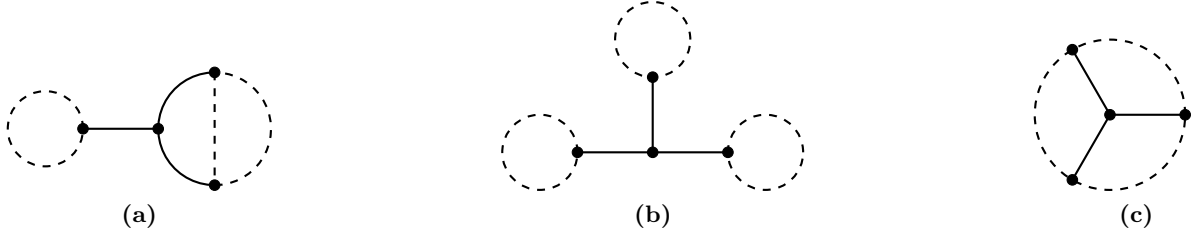

\begin{proposition}\label{prop: reflexive obstruction}
Let $\Gamma$ be a trivalent graph of with first Betti number $g$ with $k$ loops where $2 \leq k < g$. Let $\ct$ be a spanning tree of $\Gamma$, and consider a non-loop edge $f \in F(\Gamma,\ct)$ which is not adjacent to any other edges in $F(\Gamma,\ct)$. Under these conditions, the polytope $Q(\Gamma,\ct)$ is not reflexive.
\end{proposition}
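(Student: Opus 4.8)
The plan is to exhibit an explicit vertex of $3P(\Gamma,\ct)$ with a non-integral coordinate; this already forces non-reflexivity. Indeed, since $\mathbf{2}\in M_\Gamma$ the translation lemma shows $Q(\Gamma,\ct)$ is reflexive if and only if $3P(\Gamma,\ct)$ is, and by the Remark the polar-dual condition is automatic, so reflexivity of $3P(\Gamma,\ct)$ would require every vertex to lie in $M_\Gamma\subseteq\Zz^{E(\Gamma)}$. A single non-integral vertex therefore finishes the proof, and I will arrange for a loop coordinate (or, in a degenerate case, an $F$-edge coordinate) to equal $3/2$ there.

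The mechanism is a face-slicing construction. For a set $Z\subseteq E(\Gamma)$ put $\mathcal F_Z=3P(\Gamma,\ct)\cap\{w(e)=0:e\in Z\}$; this is a face of $3P(\Gamma,\ct)$ because for every edge $e$ the inequality $w(e)\ge 0$ is a consequence of the triangle inequalities at an endpoint of $e$, hence valid on $3P(\Gamma,\ct)$. The goal is to choose $Z$ so that $\mathcal F_Z$ collapses to a very low-dimensional box: I will pick $Z$ so that the nonzero edges consist of $f$, a path $P$ in $\ct$ running from $f$ to a loop-vertex $u$, all the loops, and possibly one pair of parallel $F$-edges acting as a second absorber. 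Imposing $w(e)=0$ at the triangle inequalities along $P$ forces $w(f)$ and every edge of $P$ to a common value $t$; the triangle inequalities at $u$ then read $2w(\lambda_u)\ge w(\text{last edge of }P)=t$, with $w(\lambda_u)\le 3$ the only further constraint, while the remaining loop variables are free in $[0,3]$ (and a parallel pair of $F$-edges, at its zeroed far endpoint, is forced to be constant, so behaves exactly like a loop). Thus, after the forced identifications, $\mathcal F_Z$ is a box of the form $\{0\le t\le 3,\ t/2\le s\le 3,\ 0\le s'\le 3\ (\text{remaining loops})\}$, and the point with $t=w(f)=3$, $w(\lambda_u)=3/2$, and all other loops at $3$ is a vertex of $\mathcal F_Z$, hence of $3P(\Gamma,\ct)$, with the non-integral coordinate $3/2$.

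The substance of the proof is constructing $Z$ — equivalently, routing the value of $f$ out through the tree — and this is where the hypotheses enter. Since $f$ is not adjacent to any other $F$-edge, each of $v_1,v_2$ meets exactly two tree edges (in particular $\Gamma$ has no loop and no double edge at $v_1$ or $v_2$), and $w(f)=3$ must be carried off along a tree edge at $v_1$ and along one at $v_2$: one cannot zero both tree edges at $v_2$ without also forcing $w(f)=0$. So one needs two carrying paths, one out of each of $v_1,v_2$, each ending at an absorber (a loop-vertex, or a vertex incident to a parallel pair of $F$-edges whose far end has been zeroed). The hypothesis $k<g$ guarantees that $f$ exists, and $k\ge 2$ provides two loops to aim at. When some loop-vertex is reachable from $v_1$ in $\ct$ without passing through $v_2$ and another from $v_2$ without passing through $v_1$, the two carrying paths can be taken vertex-disjoint, $Z$ is just the complement of $f$, these paths, and all loops, and the box description above is immediate (for instance, when $v_1$ and $v_2$ are tree-adjacent this is simply $Z=E(\Gamma)\setminus(\{f\}\cup\text{paths}\cup\text{loops})$ with $P$ the paths themselves).

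The main obstacle is the degenerate configuration in which all loop-vertices attach to the $v_1$–$v_2$ path $\rho$ of $\ct$ at a single vertex, so the two carrying paths cannot be made disjoint. There one instead routes one stream along $\rho$ into the loop cluster and absorbs the other inside the subtree of $\ct$ hanging off the $v_1$ side of $f$; the hypothesis $k<g$ together with the trivalence of $\Gamma$ constrains the structure of that subtree (its leaves are not loop-vertices, hence carry pairs of non-loop $F$-edges that account for the remaining first homology), and a short further case analysis locates a parallel-edge absorber or a suitable zeroing pattern so that $\mathcal F_Z$ again reduces to the box above with some loop or $F$-edge coordinate pinned between $t/2$ and $3$. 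Checking that this always succeeds, and that the resulting point has enough independent tight facets to be a genuine vertex, is the bulk of the argument; the rest is the bookkeeping sketched above.
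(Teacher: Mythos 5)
Your core mechanism is the same as the paper's: transport the value $3$ from $f$ along tree edges, with everything else pinned to $0$, into absorbers where the loop inequality $2w(\ell)\geq w(e)$ forces a coordinate equal to $\tfrac{3}{2}$, thereby exhibiting a vertex of $3P(\Gamma,\ct)$ outside $\Zz^{E(\Gamma)}\supseteq M_\Gamma$. In your disjoint-carrying-paths case this is, up to cosmetics (you leave the unused loops free and pin only one absorber at $\tfrac{3}{2}$; the paper zeroes the other loops and pins both chosen loops at $\tfrac{3}{2}$), exactly the vertex the paper writes down, and that part of your argument is sound.

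The genuine gap is the degenerate case, which you do not prove and yourself call ``the bulk of the argument.'' It cannot be waved away, because it really occurs under the stated hypotheses: take $g=5$, $k=2$, with tree consisting of the path $v_1$--$p$--$v_2$, pendant edges $v_1$--$a$ and $v_2$--$b$, and a branch $p$--$x$, $x$--$u_1$, $x$--$u_2$; attach loops at $u_1,u_2$, let $f$ join $v_1,v_2$, and let two parallel $F$-edges join $a$ and $b$. Here both loop-vertices project to the single vertex $p$, so any two carrying paths into loops share the edge $p$--$x$ (note that the simple path through $f$ between two loop-vertices posited in the paper's own proof does not exist here either, so you cannot simply defer to the paper for this case). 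Moreover your sketched remedy is not automatic: a ``parallel-edge absorber'' need not exist (enlarge the example so the side leaves carry a triangle of $F$-edges $a$--$b$, $a$--$c$, $b$--$c$ instead of a parallel pair), so one must genuinely argue that a suitable zeroing pattern always pins a half-integral value and yields enough independent tight facets --- it does in these examples, e.g.\ forcing $w(ab)=w(ac)=w(bc)=\tfrac32$, but that is precisely the case analysis you omitted. A smaller point: your stated disjointness criterion (a loop-vertex reachable from $v_1$ avoiding $v_2$, and another from $v_2$ avoiding $v_1$) is insufficient as written, since two such loop-vertices can still force both carrying paths through a common branch vertex; the correct dichotomy is the one you use afterwards, namely whether two loop-vertices have distinct projections onto the $v_1$--$v_2$ tree path. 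As submitted, the proof establishes only the generic case.
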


\begin{proof}
We will show that $Q(\Gamma,\ct) + \mathbf{2}$ is not a lattice polytope. Fix a spanning tree $\ct$, a non-loop edge $f \in F(\Gamma,\ct)$, and two loops $\ell_1,\ell_2$. There is a unique path $P$ starting at the vertex of $\ell_1$ and ending at the vertex of $\ell_2$ passing through $f$ where all other edges are in $\ct$. Since $\ell_1$ and $\ell_2$ are adjacent to bridges in $\Gamma$, we know that the path is of length at least 5. Suppose the edges in this path are $\{f_1,f_2,\dotsc,f_i\}$, so some $f_j = f$ and every other $f_k \in \ct$. We also know that $f_1$ is the bridge connected to $\ell_1$ and $f_i$ is the bridge connected to $\ell_2$. Let $v \in \Rr^{E(\Gamma)}$ be the point below
\[
    v(e) = \begin{cases}
        3 &\text{if }e = f_k \text{ for } 1\leq k \leq i \\
        \frac{3}{2} &\text{if }e = \ell_1,\ell_2 \\
        0 & \text{otherwise}
    \end{cases}.
\]
To see that $v$ is a vertex, note that $v$ is the unique solution to the following set of equations and inequalities.
\begin{enumerate}
    \item For each triple meeting at a vertex $f_j, f_{j+1}, e$ where $0 \leq j \leq i-1$, we have
    \begin{align*}
        -x_{f_j} + x_{f_{j+1}} + x_e &= 0, \\
        x_{f_j} - x_{f_{j+1}} + x_e &= 0, \\
        x_{f_j} + x_{f_{j+1}} - x_e &\geq 0,
    \end{align*}
    and we have that 
    \begin{align*}
        2x_{\ell} - x_{f_i} &= 0.
    \end{align*}
    \item For every other triple $e,f,h$ where $e,f,h \notin P$, we have
    \begin{align*}
        x_e + x_f - x_h &= 0, \\
        x_e - x_f + x_h &= 0, \\
        -x_e + x_f + x_h &= 0.
    \end{align*}
    \item For every $e \in F(\Gamma,\ct)$, we have
    \[
        x_e \leq 3,
    \]
    and for $f_j$, we have
    \begin{align*}
        x_{f_j} &= 3.
    \end{align*}
\end{enumerate}
The equalities in (1) and (3) ensure that $v$ is as prescribed on $P$, and the equations in (2) and the inequalities in (1) ensure that $v(e) = 0$ away from $P$. It follows that $v$ is a vertex of $Q(\Gamma, \ct) + \mathbf{2}$; thus, $Q(\Gamma,\ct)$ is not a lattice polytope.
\end{proof}

\begin{remark}
This is not the only obstruction on $\Gamma,\ct$ to $Q(\Gamma, \ct)$ being reflexive. For example, we have checked $Q(\Gamma,\ct)$ is not reflexive for the complete bipartitie graph $K_{3,3}$ and for the Petersen graph with various choices of spanning tree in each, yet these graphs are simple.
\end{remark}

\subsection{Normalilty}\label{subsec-normality}

Now we show that $P(\Gamma, \ct)$ is a normal lattice polytope with respect to the lattice $M_\Gamma$ when $\Gamma$ is constructed by adding $g$ loops at the leaves of $\ct$. 

\begin{definition}
Let $CP(\Gamma, \ct) \subset \Rr^{E(\Gamma)}\times \Rr$ be the cone over the polytope $P(\Gamma, \ct)\times \{1\}.$ Let $S(\Gamma, \ct)$ be the affine semigroup $CP(\Gamma, \ct) \cap (M_\Gamma \times \Zz)$.
\end{definition}

\begin{theorem}\label{thm-normality}
Let $\Gamma$ be obtained from a trivalent tree $\ct$ by adding loops at the leaves of $\ct$, then the affine semigroup $S(\Gamma, \ct)$ is generated by the lattice points $(P(\Gamma, \ct) \times \{1\}) \cap (M_\Gamma\times \Zz)$.  In particular, $P(\Gamma, \ct)$ is a normal lattice polytope. 
\end{theorem}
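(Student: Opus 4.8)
The strategy is to prove normality by the same kind of induction on the first Betti number $g$ that powered Theorem~\ref{thm:reflexive}, using the product structure of the facets $F_e$ from Lemma~\ref{lem: faces} together with a "peeling off a loop" move. Concretely, fix a leaf $e$ of $\ct$ with attached loop $\ell$, and let $\Gamma'$ be the graph obtained by deleting $e$ and $\ell$ and contracting the two remaining tree-edges $f,h$ at $e$ into a single edge $fh$; then $\Gamma'$ is again a trivalent tree with a loop at each leaf, of first Betti number $g-1$. I want to express a general lattice point $(a,k) \in S(\Gamma,\ct)$ — so $a \in M_\Gamma$, $k \geq 1$, and $a/k \in P(\Gamma,\ct)$ — as a sum of generators, i.e. of lattice points at height $1$. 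Since the conditions on $a$ restricted to $E(\Gamma')$ are exactly those defining $k\cdot P(\Gamma',\ct')$ (the triangle inequalities at the contracted vertex become $a(f)=a(h)$ plus the triangle inequality of $\Gamma'$ at $fh$, and the boundary inequalities on the loops of $\Gamma'$ are inherited), the restriction $a' := a|_{E(\Gamma')}$ (with $a'(fh) := a(f) = a(h)$ if $a(e)=0$, but in general we only have $a(f),a(h) \leq a(e) + a(\ell)$-type bounds) almost lies in $k P(\Gamma',\ct')$ — the difficulty is that $a(f)$ and $a(h)$ need not be equal away from the facet $w(e)=0$.

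So the induction must be set up more carefully. Rather than restricting, I would argue directly on the three "local" coordinates $(a(e), a(f), a(h), a(\ell))$ attached to the leaf. The point $a/k$ satisfies $0 \le a(\ell) \le k$, the triangle inequalities among $\{a(e),a(f),a(h)\}$, and the triangle inequalities $\{2a(\ell) \pm a(e)\}$ at the loop-vertex (which, since the loop contributes its edge twice, read $a(e) \le 2a(\ell)$ and, trivially, $a(e) \ge 0$; combined with $a(e) \le 2k$ this is the only loop constraint). The key local lemma I need is: the cone in $\Rr^4$ cut out by $\{0 \le x_\ell,\ x_e \le 2x_\ell,\ |x_f - x_h| \le x_e \le x_f + x_h\}$, intersected with the lattice $\{x_e + x_f + x_h \in 2\Zz,\ x_f, x_h, x_\ell \in \Zz\}$ (the local shadow of $M_\Gamma$), together with a height coordinate $k$ and the constraint $x_\ell \le k$, is generated at height $1$ — and moreover one can always subtract off a height-$1$ generator in a way that decreases $k$ by exactly $1$ while keeping $(x_f, x_h)$ in the admissible range for the smaller tree. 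This is a small, explicit polyhedral computation (the local picture is essentially a $3$-dimensional "Gelfand–Tsetlin"-type cone with one extra even-sum lattice condition), and once it is in hand the induction closes: peel a height-$1$ slice off near the leaf, check the remainder restricts to a point of $(k-1)P(\Gamma',\ct')$ lying in $M_{\Gamma'}$ (using Lemma~\ref{lem-lattice} to track the lattice), apply the inductive hypothesis, and reassemble.

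There are two base cases. The genuinely smallest is $g=1$: $\Gamma$ is a single loop, $P = [0,1]$ with lattice $\Zz$ — trivially normal. The first substantive case is $g=2$, where $\Gamma = \Gamma_1$ is the dumbbell: here $P(\Gamma_1,\ct)$ is the $3$-dimensional polytope with vertices (read off from the computation of $Q(\Gamma_1,\ct)$ earlier, rescaled and translated) and one checks normality directly, e.g. by verifying that every lattice point of $kP$ in $M_{\Gamma_1} = \Zz\ell_1 \oplus \Zz\ell_2 \oplus 2\Zz e$ is a sum of $k$ lattice points of $P$; because $P$ here is a lattice simplex-like object of small dimension this is a finite check, and in fact the even-sum condition on $e$ makes the relevant semigroup a nice one (a product-like pattern reflecting that the loop coordinates $\ell_1,\ell_2$ are "free" in $[0,1]$ subject only to $e \le 2\min(\ell_1,\ell_2,\ldots)$-type bounds).

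\textbf{The main obstacle.} The crux is the local lemma at the leaf: showing not merely that the local cone is Hilbert-generated at height $1$, but that one can \emph{choose} a height-$1$ generator to subtract so that the remainder still restricts to a lattice point of the \emph{contracted} graph's dilated polytope — i.e. so that $a(f)=a(h)$ is not needed but the pair $(a(f),a(h))$ after subtraction is compatible with some point of $(k-1)P(\Gamma',\ct')$. Handling the interaction between the "even sum" lattice condition (which couples $e$ with $f$ and $h$) and the contraction (which wants to merge $f$ with $h$) is exactly where Example~\ref{ex: K4} shows things can go wrong for general $\Gamma,\ct$; the saving feature in our situation, as in Lemma~\ref{lem: faces}, is that the only edges ever contracted are tree edges, so no boundary inequality is ever inherited onto a tree edge, and the loop coordinate $a(\ell)$ provides the slack needed to balance $a(f)$ against $a(h)$. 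Making that slack argument precise and uniform in $g$ is the technical heart of the proof.
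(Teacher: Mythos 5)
There is a genuine gap: the inductive step you describe is never actually supplied, and as set up it does not go through. Your reduction from $\Gamma$ to $\Gamma'$ requires producing, from (the remainder of) $a$, a labelling of the contracted graph, i.e.\ a single value on the merged edge $fh$; but $a(f)=a(h)$ holds only on the facet $w(e)=0$ of Lemma \ref{lem: faces}, while for a general point of $k\,P(\Gamma,\ct)$ the discrepancy $|a(f)-a(h)|$ is bounded only by $a(e)\le 2a(\ell)\le 2k$, so no single ``peel a height-$1$ slice'' step can restore equality and hand the remainder to the induction hypothesis on $\Gamma'$. Equally important, the ``key local lemma'' is only announced, not proved, and even as stated it is not enough: a height-$1$ generator must be a lattice point of all of $P(\Gamma,\ct)$, so a choice made from the local data $\bigl(a(e),a(f),a(h),a(\ell)\bigr)$ at one leaf must be certified compatible with the triangle inequalities at every other vertex simultaneously, and the remainder must stay in $(k-1)P(\Gamma,\ct)\cap M_\Gamma$ globally. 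You explicitly flag this global-compatibility/slack issue as ``the technical heart,'' but no mechanism for it is given; what is offered is a plausible plan whose central step is exactly the open part.

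For contrast, the paper's proof (Proposition \ref{prop-IDP}) avoids the contraction induction altogether. One restricts $w$ to the tree $\ct$ only: by Lemma \ref{lem-lattice} all tree values are even, and $w(e)\le 2w(\ell)\le 2L$ at each leaf edge, so the restriction lies in $L\Delta(\ct)$, where $\Delta(\ct)$ is the tree polytope with the constraint $w(e)\le 2$ at leaf edges. Normality of $\Delta(\ct)$ is imported from \cite[Theorem 1.8]{Manon-Trees} via Lemma \ref{lem-deltanormal}, giving a global decomposition $u_1+\cdots+u_L$ on the tree; each $u_i$ is then extended over the loops by a counting argument (exactly $N=\tfrac12 w(e)\le w(\ell)$ of the $u_i$ have $u_i(e)=2$ and are forced to carry loop value $1$, and one tops up to $w(\ell)$ among the remaining indices). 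If you want to keep your genus-induction route, the missing ingredient is a genuinely global statement replacing your local lemma; in practice you would end up re-proving the normality of the tree polytope, which is precisely the external input the paper leans on.
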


The proof of Theorem \ref{thm-normality} relies on the normality of a closely-related polytope $\Delta(\ct)$. 

\begin{definition}
Let $\ct$ be a trivalent tree, and let $\Delta(\ct) \subset \Rr^{E(\ct)}$ be the set of points $w$ satisfying the following conditions:
\begin{enumerate}
    \item For any three edges $e, f, g \in E(\ct)$ which share a common vertex we must have: $$w(e) \leq w(f) + w(g),$$ $$w(f) \leq w(e) + w(g),$$ $$w(g) \leq w(e) + w(f).$$
    \item For any edge $e$ containing a leaf of $\ct$ we must have: $$w(e)\leq 2.$$
\end{enumerate}

Let $M_\ct \subset \Zz^{E(\ct)} \subset \Rr^{E(\ct)}$ denote the sublattice of even integer tuples.  
\end{definition}

It may be more efficient to define $\Delta(\ct)$ with respect to the usual integer lattice, and make the second inequality of the definition $w(e) \leq 1$. We choose to present $\Delta(\ct)$ in the manner above to make a connection with a class of polyhedra which play a prominent role in the study of the moduli of weighted configurations on the projective line \cite{HMSV}, and the moduli of rank $2$ parabolic vector bundles on a pointed projective line \cite{Manon-ACB}, \cite{Manon-Trees}.

\begin{lemma}\label{lem-deltanormal}
The polytope $\Delta(\ct)$ is a normal lattice polytope with respect to the lattice $M_\ct$.
\end{lemma}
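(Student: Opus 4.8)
The plan is to prove normality of $\Delta(\ct)$ by induction on the number of edges of the trivalent tree $\ct$, peeling off a leaf at each step. First I would record the base case: when $\ct$ has a single edge $e$, $\Delta(\ct) = [0,2]$ with respect to the even lattice $M_\ct = 2\Zz$, which is visibly normal (its lattice points are $0$ and $2$, and every lattice point in the $k$-th dilate is a sum of $k$ of them). For the inductive step, fix a leaf $v$ of $\ct$ adjacent to a unique edge $e_0$; since $\ct$ is trivalent, $e_0$ meets two other edges $f_1, f_2$ at its other endpoint $u$ (unless $\ct$ is the single edge, handled above, or $\ct$ has two edges, which is an easy separate case). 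Let $\ct'$ be the tree obtained by deleting the leaf $v$ and the edge $e_0$ and contracting... more precisely, I would set up a projection $\Rr^{E(\ct)} \to \Rr^{E(\ct')}$ forgetting the coordinate $e_0$, and analyze the fibers.

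The key structural observation is that for a point $w' \in \Delta(\ct')$, the set of admissible values of $w(e_0)$ is exactly the interval
\[
    \max\bigl(0,\; |w'(f_1) - w'(f_2)|\bigr) \;\leq\; w(e_0) \;\leq\; \min\bigl(2,\; w'(f_1) + w'(f_2)\bigr),
\]
coming from the triangle inequalities at $u$ together with the leaf bound $w(e_0)\le 2$ (the inequality $w(e_0)\ge 0$ being the triangle inequality $w(f_1)\le w(e_0)+w(f_2)$ etc.\ only when... in fact $w(e_0)\ge 0$ is automatic). So $\Delta(\ct)$ fibers over $\Delta(\ct')$ with interval fibers whose endpoints are piecewise-linear with the right integrality: at lattice points $w'$ of dilates the endpoints lie in the even lattice. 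The standard technique is then: given a lattice point $z = (z', z_{e_0})$ in $k\,\Delta(\ct)\cap (M_\ct\times\{k\})$ (thinking of the cone $C\Delta(\ct)$), use the inductive hypothesis to write $z' = \sum_{j=1}^k p'_j$ with each $p'_j \in \Delta(\ct')\cap M_{\ct'}$, then choose lifts $p_j = (p'_j, c_j)$ with $c_j \in \{0,2,\dots\}$ admissible for $p'_j$ and $\sum c_j = z_{e_0}$. The existence of such a choice of the $c_j$ is a ``discrete intermediate value'' argument: the minimum achievable sum $\sum_j m(p'_j)$ and maximum achievable sum $\sum_j M(p'_j)$ of admissible even values bracket $z_{e_0}$ (this uses that $z$ itself satisfies the inequalities, so $\max(0,|z'(f_1)-z'(f_2)|)\le z_{e_0}\le\min(2k, z'(f_1)+z'(f_2))$, and subadditivity/superadditivity of $\max$ and $\min$ over the decomposition), and one can move from the all-minimum choice to the all-maximum choice changing one $c_j$ by $2$ at a time, hitting every even value in between, in particular $z_{e_0}$.

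I expect the main obstacle to be precisely this last combinatorial lemma: verifying that the endpoint functions $m(w') = \max(0, |w'(f_1)-w'(f_2)|)$ and $M(w') = \min(2, w'(f_1)+w'(f_2))$ interact correctly with Minkowski decompositions, i.e.\ that $\sum_j m(p'_j) \le z_{e_0} \le \sum_j M(p'_j)$ whenever $\sum_j p'_j = z'$ and $z_{e_0}$ is admissible for $(z',\cdot)$ at level $k$. The inequality $z_{e_0} \le \sum_j M(p'_j)$ needs $M$ to be superadditive, which fails in general for a $\min$ of two functions; one has to use that $z_{e_0} \le \min(2k, z'(f_1)+z'(f_2))$ and split into cases according to which term of each $\min(2, p'_j(f_1)+p'_j(f_2))$ is active, and similarly track the parity so that the $c_j$ can be taken even. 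A clean way to handle the parity is to observe that $w'(f_1)+w'(f_2) \equiv w'(e_0')$-type congruences from $M_{\ct'}$ force $|w'(f_1)-w'(f_2)|$ and $w'(f_1)+w'(f_2)$ to have the same parity as each other, so the admissible even values for $w(e_0)$ form a nonempty arithmetic progression with common difference $2$ whenever the real interval is nonempty — which is what makes the discrete intermediate value step go through. Once this lemma is in hand, the induction closes and normality of $\Delta(\ct)$ with respect to $M_\ct$ follows.
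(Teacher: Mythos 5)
Your strategy (project away a leaf edge $e_0$, describe the fibers as intervals, and lift a decomposition of the projected point by a discrete intermediate-value argument) is genuinely different from the paper's proof, which simply identifies the graded semigroup of lattice points of the dilates $k\Delta(\ct)$ with the semigroup $S_{\ct'}(1,\dots,1)$ of Manon's tree paper (for $\ct'$ the tree obtained by attaching two new edges at every leaf of $\ct$) and quotes the degree-one generation theorem proved there. Unfortunately your plan has a real gap at exactly the step you flag as ``the main obstacle,'' and the problem is worse than you indicate: it is not only that $M(w')=\min\bigl(2,\,w'(f_1)+w'(f_2)\bigr)$ fails to be superadditive; the lower bracketing $\sum_j m(p'_j)\le z_{e_0}$ with $m(w')=|w'(f_1)-w'(f_2)|$ also fails for an arbitrary decomposition handed to you by the inductive hypothesis, since $m$ is subadditive, which is the wrong direction (you only know $m(z')\le z_{e_0}$, and that gives no control on $\sum_j m(p'_j)$). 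Concretely, let $\ct$ be the tripod with edges $e_0,f_1,f_2$ and take the degree-two lattice point $z$ with $z(e_0)=0$, $z(f_1)=z(f_2)=2$. Its projection $z'=(2,2)$ admits the decomposition $(2,0)+(0,2)$ into lattice points of the projected polytope, and for each part the admissible value of the $e_0$-coordinate is forced to be $2$, so every lift sums to $4\neq 0$: this decomposition cannot be lifted at all, even though $z$ itself does decompose (using $(2,2)+(0,0)$ instead). So ``take any decomposition of $z'$ and apply discrete IVT'' is not a proof; you need either an exchange/swap lemma that lets you modify the decomposition of $z'$ (e.g.\ to make the parts nested in the coordinates $f_1,f_2$) or a stronger inductive statement that decomposes points of the whole tree directly. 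That combinatorial lemma is precisely the content the paper outsources to the cited theorem.

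There is also a secondary issue of well-formedness: after deleting $v$ and $e_0$ the vertex $u$ is two-valent, and the image of the projection is cut out by the surviving triangle inequalities together with $w(f_i)\ge 0$ and $|w(f_1)-w(f_2)|\le 2$, which is not $\Delta(\ct')$ for any trivalent tree; so the inductive hypothesis as you state it does not apply, and you would first have to enlarge the family of polytopes (trees with two-valent vertices and/or edge weights, as in the HMSV- or Manon-type weighted-tree polytopes) before the induction can even be set up. If you want a self-contained argument along your lines, the cleanest fix is to prove the exchange lemma in that larger family — which is essentially reproving the quoted result $S_{\ct'}(1,\dots,1)$ is generated in degree one — rather than hoping the bracketing inequalities hold for an arbitrary decomposition.
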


\begin{proof}
Let $\ct'$ be the trivalent tree obtained by adding two new edges at every leaf of $\ct$. The graded affine semigroup consisting of the lattice points of the Minkowski sums $k\Delta(\ct)$, $k \geq 1$ can be seen to be the semigroup $S_{\ct'}(1, \ldots, 1)$ from \cite[Definition 1.2]{Manon-Trees}. The lemma is a consequence of \cite[Theorem 1.8]{Manon-Trees}.  
\end{proof}

To relate $P(\Gamma, \ct)$ to $\Delta(\ct)$ we imagine splitting each loop in $\Gamma$ into two edges. The result is the tree $\ct'$ from the proof of Lemma \ref{lem-deltanormal}. This construction does not result in an isomorphism between $P(\Gamma, \ct)$ and $\Delta(\ct)$, however it does allow us to use the normality of $\Delta(\ct)$ to decompose a degree $L$ element of $S(\Gamma, \ct)$ into $L$ degree $1$ elements.  

\begin{proposition}\label{prop-IDP}
Let $w \in \left(L\cdot P(\Gamma, \ct)\times \{L\}\right) \cap \left(M_\Gamma\times \Zz\right)$ be a degree $L$
element of $S(\Gamma, \ct)$, then there are $L$ lattice points $w_1, \ldots, w_L \in \left(P(\Gamma, \ct)\times \{1\}\right) \cap \left(M_\Gamma\times \Zz\right)$ such that $w_1 + \cdots + w_L = w$.
\end{proposition}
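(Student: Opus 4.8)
The plan is to reduce Proposition \ref{prop-IDP} to the normality of $\Delta(\ct)$ established in Lemma \ref{lem-deltanormal} via the edge-splitting construction $\Gamma \rightsquigarrow \ct'$ described just above the statement. First I would set up the bookkeeping: each loop $\ell_i \in F(\Gamma, \ct)$ is replaced by two edges $\ell_i', \ell_i''$ of $\ct'$, and the remaining edges of $\ct'$ are the tree edges $E(\ct)$. Given a degree $L$ element $w \in (L \cdot P(\Gamma,\ct) \times \{L\}) \cap (M_\Gamma \times \Zz)$, I would build an auxiliary point $\tilde{w} \in \Rr^{E(\ct')}$ by setting $\tilde{w}(e) = w(e)$ on tree edges and $\tilde{w}(\ell_i') = \tilde{w}(\ell_i'') = w(\ell_i)$ on each split pair. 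The key claim is that $\tilde{w} \in L \cdot \Delta(\ct')$ (rescaling the boundary bound $w(e) \le 2$ appropriately, noting $w(\ell_i) \le L$ translates to a valid bound at the new leaves of $\ct'$) and that $\tilde{w}$ lies in the relevant lattice $M_{\ct'}$ — here I would use Lemma \ref{lem-lattice} to check that the evenness conditions defining $M_\Gamma$ on $\Gamma$ pull back exactly to the evenness conditions defining $M_{\ct'}$ on $\ct'$, since a loop $\ell_i$ splitting into $\ell_i', \ell_i''$ turns the condition $2a(\ell_i) + a(e) \in 2\Zz$ into $a(\ell_i') + a(\ell_i'') + a(e) \in 2\Zz$, which holds because $a(\ell_i') = a(\ell_i'')$.

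Next I would apply Lemma \ref{lem-deltanormal}: since $\Delta(\ct')$ is a normal lattice polytope with respect to $M_{\ct'}$, the point $\tilde{w}$ decomposes as $\tilde{w} = \tilde{w}_1 + \cdots + \tilde{w}_L$ with each $\tilde{w}_j \in \Delta(\ct') \cap M_{\ct'}$. The remaining work is to push each $\tilde{w}_j$ back down to a lattice point $w_j \in (P(\Gamma,\ct) \times \{1\}) \cap (M_\Gamma \times \Zz)$ so that $w_1 + \cdots + w_L = w$. The obvious candidate is $w_j(e) = \tilde{w}_j(e)$ on tree edges, and $w_j(\ell_i) = $ (something built from $\tilde{w}_j(\ell_i')$ and $\tilde{w}_j(\ell_i'')$). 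The subtlety is that there is no reason the individual summands should satisfy $\tilde{w}_j(\ell_i') = \tilde{w}_j(\ell_i'')$, so I cannot simply read off $w_j(\ell_i)$; and the naive averaging $\tfrac12(\tilde{w}_j(\ell_i') + \tilde{w}_j(\ell_i''))$ need not be an integer, let alone satisfy the triangle and lattice conditions for $M_\Gamma$.

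To handle this I would exploit the freedom in choosing the decomposition: because the loop edges $\ell_i'$ and $\ell_i''$ of $\ct'$ are interchangeable (there is an involution of $\ct'$ fixing $\Delta(\ct')$ and $M_{\ct'}$ that swaps them), one can symmetrize or re-sort the values $\{\tilde{w}_j(\ell_i')\}_{j=1}^L \cup \{\tilde{w}_j(\ell_i'')\}_{j=1}^L$ across the $L$ summands. Concretely, I would argue that after possibly replacing the decomposition by another one, for each loop $i$ one can arrange the multiset of the $2L$ values $\tilde{w}_j(\ell_i'), \tilde{w}_j(\ell_i'')$ to be paired up so that within each summand $\tilde{w}_j(\ell_i') = \tilde{w}_j(\ell_i'')$; this is a matching/rearrangement argument done loop-by-loop, using that the boundary constraint $\le 1$ at the leaves of $\ct'$ means each such value is $0$ or $1$, plus the constraint coming from membership in $M_{\ct'}$, so that a parity count on how many of the $2L$ values equal $1$ (it must be even, in fact each summand contributes a $0$ or $2$ once re-sorted) permits the pairing. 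Once the summands are in this symmetric form, setting $w_j(\ell_i) := \tilde{w}_j(\ell_i') = \tilde{w}_j(\ell_i'')$ gives $w_j \in P(\Gamma,\ct) \cap M_\Gamma$ of degree $1$ with $\sum_j w_j = w$, as desired.

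**The main obstacle**, as the previous paragraph indicates, is precisely this last descent step: the normality of $\Delta(\ct')$ only gives a decomposition of $\tilde{w}$ among the $2L$ "half-loop" coordinates, and one must show this can be chosen compatibly with the diagonal embedding $\ell_i \mapsto (\ell_i', \ell_i'')$ so that it descends to $\Gamma$ while respecting both the triangle inequalities at the (trivalent tree) vertices and the evenness condition defining $M_\Gamma$. I expect that the restriction to graphs $\Gamma$ of the special form "add a loop at every leaf of $\ct$" is exactly what makes this matching argument go through — the loops only interact with the rest of the graph through a single bridge edge, which gives enough room to re-sort the half-loop values independently for each loop without disturbing the tree-edge decomposition.
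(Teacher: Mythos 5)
Your overall plan---reduce to the normality of the tree polytope via Lemma \ref{lem-deltanormal}---is the paper's plan, but the specific route through $\ct'$ has a genuine gap. First, the lift $\tilde{w}$ with $\tilde{w}(\ell_i')=\tilde{w}(\ell_i'')=w(\ell_i)$ need not lie in $M_{\ct'}$: by definition $M_{\ct'}$ consists of \emph{even} integer tuples, while Lemma \ref{lem-lattice} only gives $w(\ell_i)\in\Zz$ (the lattice $M_\Gamma$ is $\Zz^{F(\Gamma,\ct)}\oplus(2\Zz)^{E(\ct)}$). So whenever some $w(\ell_i)$ is odd, Lemma \ref{lem-deltanormal} cannot be invoked for $\tilde{w}$ at all. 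Your matching step inherits the same problem: granting a decomposition, the summands' values on the leaf edges $\ell_i',\ell_i''$ lie in $\{0,2\}$, and the number of $2$'s across the $2L$ slots of the pair is $w(\ell_i)$; when this is odd there is no way to re-sort so that each summand has equal coordinates on $\ell_i'$ and $\ell_i''$, so the claimed parity count (``it must be even'') is false. There are two further problems: re-sorting values between summands is constrained by the triangle inequalities at the split vertex (e.g.\ a summand with leaf values $(0,0)$ but interior value $2$ on the adjacent edge leaves $\Delta(\ct')$), and even when the pairing works, setting $w_j(\ell_i)=\tilde{w}_j(\ell_i')\in\{0,2\}$ can violate the boundary inequality $w_j(\ell_i)\le 1$ of $P(\Gamma,\ct)$; here you have conflated the $\le 2$/even-lattice normalization of $\Delta$ (the one Lemma \ref{lem-deltanormal} actually treats) with the rescaled $\le 1$/standard-lattice version mentioned only as a remark.

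The paper avoids all of this by never duplicating the loop values. One restricts $w$ to $E(\ct)$: the parity condition at the loop vertices forces $w(e)\in 2\Zz$ on every tree edge, and the triangle inequality gives $w(e)\le 2w(\ell)\le 2L$ at the leaf edges, so the restriction $u$ lies in $L\cdot\Delta(\ct)\cap M_\ct$ and Lemma \ref{lem-deltanormal} applies on $\ct$ itself, giving $u=u_1+\cdots+u_L$ with $u_j\in\Delta(\ct)\cap M_\ct$. The loop values are then assigned afresh, loop by loop, by a counting argument rather than a symmetrization: at a loop $\ell$ with adjacent leaf edge $e$, each $u_j(e)$ is $0$ or $2$ and exactly $N=\tfrac12 w(e)\le w(\ell)$ summands have $u_j(e)=2$; set $w_j(\ell)=1$ on those, on $w(\ell)-N$ further summands with $u_j(e)=0$, and $0$ otherwise. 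The triangle inequality at the loop vertex ($u_j(e)\le 2w_j(\ell)$), the boundary inequality $w_j(\ell)\le 1$, and the parity condition ($u_j(e)+2w_j(\ell)\in 2\Zz$) are then automatic, and the loop values sum correctly by construction. If you want to salvage your write-up, replace the diagonal lift to $\ct'$ and the re-sorting argument by this direct extension step.
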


\begin{proof}
First we observe that the point $w \in L\cdot P(\Gamma, \ct)$ gives a point
$u \in L\cdot \Delta(\ct)$ by restricting $w: E(\Gamma) \to \Zz$ to $E(\ct) \subset E(\Gamma)$. If $e \in E(\ct)$ shares a vertex with a loop $\ell \in F(\Gamma, \ct)$, then $w(e) \leq 2w(\ell) \leq 2L$.  Moreover, the defining parity condition $w(e) + w(f) + w(g) \in 2\Zz$ of $M_\Gamma$ implies that $w(e) \in 2\Zz$.  As a consequence, $w$ weights all edges of $\ct$ with even integers.   

Next we use Lemma \ref{lem-deltanormal} to find $u_1, \ldots, u_L \in \Delta(\ct) \cap M_\ct$ such that $u_1 + \cdots + u_L = u$.  We must show that each $u_i$ extends to a $w_i \in P(\Gamma, \ct) \cap M_\Gamma$ in such a way that $w_1 + \cdots + w_L = w$. It suffices to show that this extension can be carried out at each loop of $\Gamma$. As above, let $e \in E(\ct)$ be an edge which shares a vertex with a loop $\ell \in F(\Gamma, \ct)$.  For each $i$, $u_i(e)$ is either $0$ or $2$, and there are precisely $N = \frac{1}{2}w(e) \leq w(\ell)$ weightings $u_{i_1}, \ldots, u_{i_N}$ where $u_{i_j}(e) = 2$. We extend each of these weightings to $\Gamma$ be letting $w_{i_j}(\ell) = 1$. To ensure that $w(\ell) = w_1(\ell) + \cdots + w_L(\ell)$ it remains to set $w_k(\ell) = 1$ for some set of size $w(\ell) - \frac{1}{2}w(e)$ such that $w_k(e) = 0$. As $N = \frac{1}{2}w(e) \leq w(\ell)$, this is always possible.  After making such a choice at each leaf $\ell$ we obtain $w_1, \ldots, w_L \in P(\Gamma, L) \cap M_\Gamma$ which sum to $w$. 
\end{proof}

Theorem \ref{thm-normality} is a consequence of Proposition \ref{prop-IDP}.

\section{The degeneration of $\mathfrak{X}_g$}\label{sec-degeneration}

The purpose of this section is to construct a flat degeneration of $\mathfrak{X}_g$ to a Fano projective toric variety.  Let $\Gamma$ be a trivalent graph of genus $g$, and fix a spanning tree $\ct$ of $\Gamma$. Recall that $E(\Gamma)$ denotes the set of edges of $\Gamma$, and $F(\Gamma, \ct)$ is the set of edges of $\Gamma$ not in $\ct$. 

\subsection{A piecewise-linear valuation on $\cx(F_g, \SL_2(\Cc))$}

Let $A_g$ be the coordinate ring of the free group $\SL_2(\Cc)$-character variety, and recall the basis $\B_\Gamma$ of spin diagrams $\Phi_a$ associated to the integral points $a \in P(\Gamma)\cap M_\Gamma$.  In the terminology of \cite{Kaveh-Manon-NOK} and \cite{Kaveh-Manon-TVB}, $\B_\Gamma$ is a \emph{linear adapted basis} for the valuations $\v_\gamma$, $\gamma \in C_\Gamma$. The following summarizes this property, for a proof see \cite[]{Manon-Outer}. 

\begin{proposition}\label{prop-adapted}
For any trivalent graph $\Gamma$ and $\gamma \in C_\Gamma$ we have:

\begin{enumerate}
    \item $\v_\gamma(\Phi_a) = \langle \gamma, a \rangle,$
    \item $\v_\gamma(\sum c_a \Phi_a) = \max\{\v_\gamma(\Phi_a) \mid c_a \neq 0\}.$
\end{enumerate}

\end{proposition}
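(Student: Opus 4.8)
The plan is to deduce this from the results of \cite{Manon-Outer} cited earlier in Section \ref{sec-compactification}, which already establish the two components of the claim in slightly different language. For part (1), recall that for $a \in P(\Gamma)\cap M_\Gamma$ there is the spin diagram $\Phi_a \in \B_\Gamma$, and by \cite[Corollary 6.12]{Manon-Outer} we have $\v_\gamma(\Phi_a) = \langle \gamma, a\rangle_\Gamma$ for every $\gamma \in C_\Gamma$, where the pairing is the one induced by the standard inner product on $\Rr^{E(\Gamma)}$. So part (1) is a direct citation; the only thing to check is that the pairing $\langle\gamma,a\rangle$ written in the statement is the same $\langle -,-\rangle_\Gamma$, which it is by definition.

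The substance is in part (2): that $\B_\Gamma$ is a \emph{linear adapted basis} in the sense of \cite{Kaveh-Manon-NOK}, i.e. the valuation of a linear combination is the maximum of the valuations of the terms with nonzero coefficient, with no cancellation. First I would record the inequality $\v_\gamma(\sum c_a\Phi_a) \le \max\{\v_\gamma(\Phi_a)\mid c_a\neq 0\}$, which holds for any valuation and any finite sum (using $\v_\gamma(x+y)\le\max\{\v_\gamma(x),\v_\gamma(y)\}$ and $\v_\gamma(cx)=\v_\gamma(x)$ for $c\in\Cc^*$). For the reverse inequality one must rule out cancellation in top degree. The standard mechanism, which is what \cite{Manon-Outer} uses, is that the associated graded ring $\mathrm{gr}_{\v_\gamma}(A_g)$ has a basis indexed by the images of the $\Phi_a$ — equivalently, the leading forms of the $\Phi_a$ are linearly independent in each graded piece. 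This is exactly the content of $C_\Gamma$ being a \emph{prime cone} in the tropical variety of the embedding of $\cx(F_g,\SL_2(\Cc))$ by $Y_g$, together with the fact that $\B_\Gamma$ is a \emph{Khovanskii basis} for this valuation, as in \cite[Proposition 6.3]{Manon-Outer} and the surrounding discussion.

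Concretely, the key step is: given $\sum c_a\Phi_a$ with maximal value $m = \max\{\langle\gamma,a\rangle \mid c_a\neq 0\}$ attained on the subset $S = \{a \mid c_a\neq 0,\ \langle\gamma,a\rangle = m\}$, show that $\sum_{a\in S} c_a\,\overline{\Phi_a} \neq 0$ in the degree-$m$ piece of $\mathrm{gr}_{\v_\gamma}$, where $\overline{\Phi_a}$ denotes the image of $\Phi_a$. Since the $\Phi_a$ form a vector-space basis of $A_g$ and the filtration by $\v_\gamma$ is the one whose graded pieces are spanned by the images of the $\Phi_a$ at the appropriate degree, this linear independence is immediate from the construction of the filtration — there is simply nothing in lower filtration steps for the combination to fall into. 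I would phrase the proof as: cite \cite[Corollary 6.12]{Manon-Outer} for (1); for (2), observe the filtered pieces $F_{\le m}A_g = \mathrm{span}\{\Phi_a \mid \langle\gamma,a\rangle \le m\}$ genuinely define the filtration associated to $\v_\gamma$ (this is again \cite{Manon-Outer}, via the Rees-algebra / Khovanskii-basis description), from which property (2) is formal.

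The main obstacle is purely expository: making sure the reader sees that the cited results of \cite{Manon-Outer} — stated there in terms of prime cones in tropical varieties and Khovanskii bases — are literally equivalent to the "linear adapted basis" statement of Proposition \ref{prop-adapted}. There is no new mathematics to do; the risk is a gap in the chain of translations between the valuation-theoretic, tropical, and filtration-theoretic vocabularies. I would therefore keep the proof short, cite \cite[Corollary 6.12]{Manon-Outer} and \cite[Proposition 6.3]{Manon-Outer} explicitly, and point to \cite{Kaveh-Manon-NOK} for the equivalence of "Khovanskii basis adapted to a prime cone" with "linear adapted basis".
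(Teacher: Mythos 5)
Your proposal is correct and matches the paper's treatment: the paper offers no independent argument for this proposition, simply pointing to \cite{Manon-Outer} (the same results you cite, namely Corollary 6.12 for the pairing formula and Proposition 6.3 with the prime-cone/adapted-basis framework of \cite{Kaveh-Manon-NOK} for the no-cancellation property). Your added explanation of why adaptedness rules out cancellation in top degree is consistent with, and somewhat more explicit than, what the paper records.
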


There is a distinguished point of $C_\Gamma$ for each edge $e \in E(\Gamma)$, namely the function which assigns $0$ to every element of $E(\Gamma) \setminus \{e\}$ and assigns $1$ to $e$. In the sequel we abuse notation by refering to this point as $e$. 

The data of the valuations $\v_\gamma$ and basis $\B_\Gamma$ can be viewed as a valuation $\v_\Gamma$ on $A_g$ which takes values in the semifield $\mathcal{O}_{C_\Gamma}$ of piecewise-linear functions on the cone $C_\Gamma$, as described in \cite{Kaveh-Manon-TVB}. The basis $\B_\Gamma \subset A_g$ is then a so-called linear adapted basis of $\v_\Gamma: A_g \to \mathcal{O}_{C_\Gamma}$.  The following is a consequence of \cite[Proposition 6.1]{Kaveh-Manon-NOK} and \cite[Theorem 1.2]{Kaveh-Manon-TVB}. Let $Y_\Gamma$ denote the normal affine toric variety associated to the cone $P(\Gamma)$ and the lattice $M_\Gamma$. 

\begin{proposition}\label{prop-flataffinefamily}
Let $\Gamma$ be a trivalent graph with first Betti number $g$. There is a toric flat family $$\pi_\Gamma: X_\Gamma \to \Cc^{E(\Gamma)}.$$  Any fiber of this family over a point in $(\Cc^*)^{E(\Gamma)}$ is isomorphic to $\cx(F_g, \SL_2(\Cc))$.  The fiber over the origin of $\Cc^{E(\Gamma)}$ is isomorphic to $Y_\Gamma$. 
\end{proposition}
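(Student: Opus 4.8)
The plan is to invoke the general machinery of \cite{Kaveh-Manon-TVB} connecting linear adapted bases with piecewise-linear values to toric flat families, applied to the valuation $\v_\Gamma: A_g \to \mathcal{O}_{C_\Gamma}$ and its linear adapted basis $\B_\Gamma$. First I would record that $\v_\Gamma$ is indeed a valuation with a finite Khovanskii basis: by Proposition \ref{prop-adapted}, the spin diagrams $\Phi_a$, $a \in P(\Gamma) \cap M_\Gamma$, form a linear adapted basis, so the associated graded algebra $\mathrm{gr}_{\v_\Gamma}(A_g)$ is the affine semigroup algebra $\Cc[P(\Gamma) \cap M_\Gamma]$; this is the coordinate ring of the affine toric variety $Y_\Gamma$ attached to the cone $P(\Gamma)$ and lattice $M_\Gamma$. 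The finite set $Y_g$ of regular functions giving the tropical embedding (referenced after Proposition \ref{prop-class}, from \cite[Proposition 6.3]{Manon-Outer}) supplies a finite Khovanskii basis, so the Rees-type construction of \cite[Theorem 1.2]{Kaveh-Manon-TVB} applies.

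Next I would form the multi-parameter Rees algebra associated to the $\mathcal{O}_{C_\Gamma}$-valued valuation. Concretely, $C_\Gamma \cong \Rr_{\geq 0}^{E(\Gamma)}$ has extremal rays indexed by the edges $e \in E(\Gamma)$ (the distinguished points denoted $e$), and the piecewise-linear function $\v_\Gamma$ is determined by the finitely many linear valuations $\v_e$ together with the fan structure on $C_\Gamma$ coming from the domains of linearity of the $\langle -, a\rangle_\Gamma$. Filtering $A_g$ by the product of the one-parameter filtrations $\v_e$ and taking the Rees algebra over $\Cc[t_e : e \in E(\Gamma)]$ yields a finitely generated flat $\Cc[\Cc^{E(\Gamma)}]$-algebra (flatness because the $\Phi_a$ give a basis compatible with every filtration simultaneously, so the Rees algebra is a free module over $\Cc[t_e]$); its relative spectrum is $X_\Gamma$ and the structure map is $\pi_\Gamma$. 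This is precisely the content of \cite[Theorem 1.2]{Kaveh-Manon-TVB} once \cite[Proposition 6.1]{Kaveh-Manon-NOK} is used to certify that $\B_\Gamma$ is a linear adapted basis, which Proposition \ref{prop-adapted} records.

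It then remains to identify the fibers. Over the origin, the fiber ring is $\mathrm{gr}_{\v_\Gamma}(A_g) \cong \Cc[P(\Gamma) \cap M_\Gamma]$, so the special fiber is $Y_\Gamma$ by definition of $Y_\Gamma$. Over a point of the torus $(\Cc^*)^{E(\Gamma)}$, rescaling the variables $t_e$ is an isomorphism onto the fiber over $(1,\dots,1)$, whose coordinate ring is the degree-zero part of the (localized) Rees algebra, canonically isomorphic back to $A_g$; hence that fiber is $\cx(F_g, \SL_2(\Cc))$. The toric property of the family — that the total space carries a torus action extending the $(\Cc^*)^{E(\Gamma)}$-action on the base and acting on $\mathrm{gr}_{\v_\Gamma}(A_g)$ by its natural grading — is again part of \cite[Theorem 1.2]{Kaveh-Manon-TVB}.

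The main obstacle is verifying the hypotheses of \cite[Theorem 1.2]{Kaveh-Manon-TVB} rather than its conclusion: one must check that $\B_\Gamma$ is genuinely a linear adapted basis for the \emph{full} piecewise-linear valuation $\v_\Gamma$ (not merely for each individual $\v_\gamma$), i.e. that the quasivaluation assembled from the fan $C_\Gamma$ is well-defined and that the associated graded is the semigroup algebra of $P(\Gamma)\cap M_\Gamma$ with no extra relations. This is exactly where Proposition \ref{prop-adapted}(2) — the max-additivity of $\v_\gamma$ on arbitrary linear combinations, uniformly in $\gamma$ — does the work, together with the fact (from \cite{Manon-Outer}) that the $\Phi_a$ multiply with leading term $\Phi_{a+a'}$ up to lower-order terms, so the Khovanskii basis condition holds; I would cite these and not re-derive them.
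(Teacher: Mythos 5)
Your proposal is correct and follows essentially the same route as the paper, which establishes this proposition simply by citing \cite[Proposition 6.1]{Kaveh-Manon-NOK} and \cite[Theorem 1.2]{Kaveh-Manon-TVB} applied to the valuation $\v_\Gamma$ with adapted basis $\B_\Gamma$. You merely unpack that citation --- the Rees-algebra construction over $\Cc[t_e]$, the identification of the special fiber with $\Spec\bigl(\Cc[P(\Gamma)\cap M_\Gamma]\bigr) = Y_\Gamma$, and the rescaling argument for the torus fibers --- together with the correct observation that the substantive hypotheses (the adapted-basis/Khovanskii-basis conditions) are supplied by Proposition \ref{prop-adapted} and \cite{Manon-Outer}.
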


We construct a degeneration of $\mathfrak{X}_g$ for every graph $\Gamma$ with first Betti number $g$ along with choice of spanning tree $\ct$ by fiber-wise compactifying the family $X_\Gamma$. This yields a projective flat family of the form 
\[
    \underline{\pi_\Gamma} : \overline{X_\Gamma} \to \Cc^{E(\Gamma)}
\]
where each fiber is a projective variety containing the original fiber as a dense open subset. Our construction is more or less a relative version of the compactifications described in \cite[\S 6]{Kaveh-Manon-NOK}.

\subsection{Compactifying the family}

Let $\Gamma,\ct$ be as in the previous section. We construct a new ring $R_g$ which is a graded subring of $A_g[t_e ~|~ e \in F(\Gamma, \ct)]$ by letting $$(R_g)_b = \{f \mid \v_\Gamma(f)[e] \leq b(e)\} \subset A_gt^b.$$

\begin{proposition}\label{prop-cox}
For any graph $\Gamma$ with first Betti number $g$ and spanning tree $\ct$, $R_g$ is the Cox ring of $\mathfrak{X}_g$: $$R_g \cong Cox(\mathfrak{X}_g).$$
\end{proposition}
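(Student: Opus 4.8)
The plan is to identify $R_g$ with the Cox ring of $\mathfrak{X}_g$ by comparing graded pieces. Recall that $\mathrm{Cl}(\mathfrak{X}_g) = \mathrm{Cl}(\cx_{\hat\Gamma})$ where $\hat\Gamma$ is the wedge of $g$ loops, and by Proposition \ref{prop-class} this is the free abelian group $\bigoplus_{e\in F(\Gamma,\ct)} \Zz D_{\ell_e}$ on the $g$ boundary divisors (after identifying $F(\Gamma,\ct)$ with the edge set of $\hat\Gamma$ via the collapsing map $\pi\colon\Gamma\to\hat\Gamma$). So the Cox ring is $\mathrm{Cox}(\mathfrak{X}_g) = \bigoplus_{b\in\Zz^{F(\Gamma,\ct)}} H^0\!\big(\mathfrak{X}_g,\, \textstyle\sum_{e} b(e) D_{\ell_e}\big)$, with multiplication induced by the multiplication of sections. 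The first step is therefore to compute each graded piece $H^0(\mathfrak{X}_g, \sum b(e) D_{\ell_e})$ and match it with $(R_g)_b$.

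The second step is precisely this matching, and it is where Proposition \ref{prop-sectionspaces} does the work. Applying that proposition to the trivalent graph $\Gamma$, the spanning tree $\ct$, and the collapsing map $\pi\colon\Gamma\to\hat\Gamma$ (which contracts exactly the edges of $\ct$ and sends $F(\Gamma,\ct)$ bijectively onto $E(\hat\Gamma)$), we get that $H^0(\cx_{\hat\Gamma}, \sum_e n_e D_e)$ has basis the spin diagrams $\Phi_a$, $a\in P(\Gamma)\cap M_\Gamma$, with $a(\ell)\le n_{\pi(\ell)}$ for each $\ell\in F(\Gamma,\ct)$. On the other hand, $(R_g)_b = \{f\in A_g \mid \v_\Gamma(f)[e]\le b(e)\ \forall e\in F(\Gamma,\ct)\}\cdot t^b$, and by Proposition \ref{prop-adapted}, writing $f = \sum c_a \Phi_a$, we have $\v_\Gamma(f)[e] = \max\{\v_e(\Phi_a) \mid c_a\neq 0\} = \max\{a(e)\mid c_a\neq 0\}$ (using that $\v_e(\Phi_a) = \langle e, a\rangle_\Gamma = a(e)$). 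Hence $f\in(R_g)_b$ if and only if every $\Phi_a$ appearing in $f$ satisfies $a(e)\le b(e)$ for all $e\in F(\Gamma,\ct)$ — exactly the same spanning set of spin diagrams. So $(R_g)_b \cong H^0(\mathfrak{X}_g, \sum b(e) D_{\ell_e})$ as vector spaces, compatibly in $b$.

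The third step is to check that this degree-wise isomorphism is a ring isomorphism, i.e.\ that it intertwines the two multiplications. The multiplication in $R_g\subset A_g[t_e]$ is just multiplication in $A_g$ together with $t^b\cdot t^{b'} = t^{b+b'}$; the multiplication in the Cox ring is the natural pairing $H^0(D)\otimes H^0(D')\to H^0(D+D')$ coming from multiplying sections, which under the inclusion of each $H^0(\sum b(e)D_{\ell_e})$ into $A_g$ as a subspace of spin diagrams is again just multiplication in $A_g$. Both sit inside $A_g$ and agree there, so the identification is automatically multiplicative; one should also note that the $t$-variables on the $R_g$ side correspond to the canonical sections cutting out the boundary divisors $D_{\ell_e}$ on the Cox side. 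Finally, one invokes that $\mathfrak{X}_g$ has the right geometric hypotheses for $\bigoplus_b H^0(\mathfrak{X}_g,\sum b(e)D_{\ell_e})$ to genuinely be the Cox ring — namely $\mathfrak{X}_g$ is normal with free, finitely generated class group and only constant global units, which follows from Proposition \ref{prop-class} and its proof (factoriality of the open part, no nonconstant units) together with normality of $\cx_{\hat\Gamma}$.

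I expect the main obstacle to be bookkeeping rather than a deep difficulty: one must be careful that Proposition \ref{prop-sectionspaces} is being applied to the correct collapsing map and that the indexing of boundary divisors $D_{\ell_e}$ on $\mathfrak{X}_g$ matches the $t$-variables indexed by $F(\Gamma,\ct)$, and one must make sure the Cox ring is being taken in the graded sense (over $\mathrm{Cl}(\mathfrak{X}_g)$, not just an anticanonical section ring) so that the multiplication really is the naive one inside $A_g$. A secondary subtlety is verifying the standing hypotheses under which $\mathrm{Cox}(\mathfrak{X}_g)$ is well-defined and equals the total coordinate ring — this is where one leans on factoriality of $\cx(F_g,\SL_2(\Cc))$ and the normal-crossings structure of the boundary recorded in Section \ref{sec-compactification}.
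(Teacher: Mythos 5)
Your proposal is correct and follows essentially the same route as the paper: the paper's proof likewise applies Proposition \ref{prop-sectionspaces} to the collapsing map $\Gamma \to \hat{\Gamma}$ to identify each graded piece $(R_g)_b$ with $H^0(\mathfrak{X}_g, \sum b(e) D_e)$, and invokes Proposition \ref{prop-class} to see the grading exhausts $\Cl(\mathfrak{X}_g)$. Your additional details (the adapted-basis computation via Proposition \ref{prop-adapted}, the multiplicativity check inside $A_g$, and the well-definedness hypotheses for the Cox ring) are checks the paper leaves implicit, and they are all consistent with its argument.
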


\begin{proof}
We use Proposition \ref{prop-sectionspaces} on the graph $\Gamma$ and the image graph $\hat{\Gamma}$ obtained by contracting $\ct$ to a single vertex.  The graded component $(R_g)_b \subset A_g$ is then seen to be the section space $H^0(\mathfrak{X}_g, \sum_{e \in E(\hat{\Gamma})} b(e) D_e)$. By Proposition \ref{prop-class}, the summands of $R_g$ run over all elements of $\Cl(\mathfrak{X}_g)$. 
\end{proof}

We consider an affine toric flat family $\hat{\pi}_\Gamma: \hat{X}_\Gamma \to \Cc^{E(\Gamma)}$.  A general fiber of this family is isomorphic to $\Spec(R_g)$, and the special fiber is a toric degeneration of $\Spec(R_g)$. To make this construction we require a piecewise-linear valuation $\w_\Gamma: R_g \to \mathcal{O}_{C_\Gamma}$.  We obtain $\w_\Gamma$ by extending $\v_\Gamma$ to the Laurent polynomial ring $A_g[t_e \mid e \in F(\Gamma, \ct)]$, and then considering the corresponding induced valuation on $R_g$. The next proposition summarizes consequences of this construction. 

\begin{proposition}\label{prop-coxflatfamily}
There is a piecewise-linear valuation $\w_\Gamma: R_g \to \mathcal{O}_{C_\Gamma}$ with adapted basis $\{\Phi_at^b \mid a \in P(\Gamma) \cap M_\Gamma, b \in \Zz^{F(\Gamma, \ct)}, a(e) \leq b(e) \forall e\in F(\Gamma, \ct)\}$.  The special fiber of the corresponding toric flat family $\hat{\pi}_\Gamma: \hat{X}_\Gamma \to \Cc^{E(\Gamma)}$ is the normal affine toric variety associated to the cone $\hat{P}(\Gamma) = \{(a, b) \mid a(e) \leq b(e) \forall e \in F(\Gamma, \ct)\} \subset P(\Gamma) \times \Rr^g$.  The action of $(\Cc^*)^{F(\Gamma, \ct)}$ on $\Spec(R_g)$ corresponding to the class group grading extends to a fiberwise action on the whole family $\hat{X}_\Gamma$. 
\end{proposition}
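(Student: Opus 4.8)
The plan is to assemble this proposition from the general machinery of piecewise-linear valuations and their adapted bases developed in \cite{Kaveh-Manon-TVB} and \cite{Kaveh-Manon-NOK}, applied to the ring $R_g = \mathrm{Cox}(\mathfrak{X}_g)$ constructed just above. First I would extend the valuation $\v_\Gamma : A_g \to \mathcal{O}_{C_\Gamma}$ to the Laurent polynomial ring $A_g[t_e^{\pm 1} \mid e \in F(\Gamma,\ct)]$ by declaring $\v_\Gamma(t_e)$ to be the linear function $\gamma \mapsto \gamma(e)$ on $C_\Gamma$ — i.e.\ the coordinate functional dual to the edge $e$ — and extending additively on monomials and by the max rule on sums, exactly as in Proposition \ref{prop-adapted}(2). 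Since $R_g \subset A_g[t_e \mid e \in F(\Gamma,\ct)]$ is a graded subring (by the very definition $(R_g)_b = \{f \mid \v_\Gamma(f)[e] \le b(e)\} \subset A_g t^b$), restricting this extended valuation gives a piecewise-linear valuation $\w_\Gamma : R_g \to \mathcal{O}_{C_\Gamma}$. The products $\Phi_a t^b$ with $a \in P(\Gamma)\cap M_\Gamma$, $b \in \Zz^{F(\Gamma,\ct)}$, and $a(e) \le b(e)$ for all $e \in F(\Gamma,\ct)$ lie in $R_g$ by construction, and they form a $\Cc$-basis of $R_g$ because the $\Phi_a$ form a basis of $A_g$ (so $\{\Phi_a t^b\}$ over \emph{all} integral $b$ is a basis of the Laurent ring, and the subring $R_g$ is spanned precisely by those with $a(e) \le b(e)$). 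That this basis is \emph{adapted} for $\w_\Gamma$ — meaning $\w_\Gamma$ of any element equals the max of $\w_\Gamma$ over the basis elements appearing with nonzero coefficient — follows because $\w_\Gamma(\Phi_a t^b)$ is the linear function $\gamma \mapsto \langle \gamma, a\rangle + \langle \gamma, b\rangle$, and distinct pairs $(a,b)$ give distinct linear functions, so no cancellation of leading terms occurs; this is the standard criterion from \cite[Proposition 6.1]{Kaveh-Manon-NOK}.

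Next I would identify the special fiber. By the general theory (\cite[Theorem 1.2]{Kaveh-Manon-TVB}), the Rees-type construction attached to a piecewise-linear valuation with a linear adapted basis yields a toric flat family $\hat{\pi}_\Gamma : \hat{X}_\Gamma \to \Cc^{E(\Gamma)}$ whose special fiber is $\Spec$ of the associated graded algebra $\mathrm{gr}_{\w_\Gamma}(R_g)$, and this associated graded algebra is the semigroup algebra of the semigroup generated by the value vectors $(a,b)$ of the adapted basis elements. So I must check that the monoid generated by $\{(a,b) \mid a \in P(\Gamma)\cap M_\Gamma,\ a(e)\le b(e)\ \forall e\}$ is exactly the set of lattice points of the cone $\hat{P}(\Gamma) = \{(a,b) \mid a(e) \le b(e)\ \forall e \in F(\Gamma,\ct)\} \subset P(\Gamma)\times \Rr^g$, intersected with the appropriate lattice $M_\Gamma \times \Zz^{F(\Gamma,\ct)}$, and that this semigroup is saturated so that the toric variety is normal. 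The containment of the generating set in $\hat{P}(\Gamma)$ is immediate; saturation and the fact that the lattice points are generated by the given vectors is a short argument: given a lattice point $(a,b) \in \hat{P}(\Gamma)$ one can write it as a sum of the same point with a suitable nonnegative combination of the rays of $P(\Gamma)\times\{0\}$ pushed up in the $b$-coordinates, but more cleanly one observes that the projection to $P(\Gamma)\times \Rr^g$ of the cone $\hat P(\Gamma)$ is a finitely-generated rational cone and the semigroup is its full lattice-point semigroup because adding any amount to $b$ keeps the inequalities $a(e)\le b(e)$ valid.

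Finally, for the statement about the torus action: the class-group grading of $R_g = \mathrm{Cox}(\mathfrak{X}_g)$ by $\Cl(\mathfrak{X}_g) \cong \Zz^g$ (Proposition \ref{prop-class}) corresponds to an action of $(\Cc^*)^{F(\Gamma,\ct)} \cong (\Cc^*)^g$ on $\Spec(R_g)$; concretely this torus scales the $t_e$ variables. Because $\w_\Gamma$ was built by extending $\v_\Gamma$ over exactly these $t_e$, the grading is compatible with the filtration defining $\w_\Gamma$, so the action descends to the associated graded ring and hence extends to a fiberwise action on all of $\hat{X}_\Gamma$; I would spell this out by noting the torus permutes the adapted basis elements $\Phi_a t^b$ (multiplying each by a character), which is precisely the condition needed for the action to be inherited by the Rees algebra and its central fiber. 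I expect the main obstacle to be the normality/saturation verification for $\hat{P}(\Gamma)$ — i.e.\ confirming that the associated graded semigroup really is the full set of lattice points of the cone and not a proper sub-semigroup — since everything else is a direct unwinding of definitions and citations; but this should follow from the parity structure of $M_\Gamma$ (Lemma \ref{lem-lattice}) together with the observation that the $b$-coordinates are unconstrained above the diagonal $a(e) = b(e)$.
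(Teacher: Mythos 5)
Your overall route is the same one the paper takes: the paper gives no separate proof of this proposition, treating it as a summary of the construction (extend $\v_\Gamma$ over the variables $t_e$, $e \in F(\Gamma,\ct)$, restrict to $R_g$) together with the cited machinery of \cite{Kaveh-Manon-NOK} and \cite{Kaveh-Manon-TVB}, and your first and third paragraphs unwind exactly that. However, one claim you lean on is false: with the extension $\w_\Gamma(t_e) = (\gamma \mapsto \gamma(e))$, distinct admissible pairs $(a,b)$ do \emph{not} give distinct elements of $\mathcal{O}_{C_\Gamma}$. For instance, on the dumbbell graph let $a$ have $a(\ell_1)=1$ and all other coordinates $0$ (this lies in $P(\Gamma)\cap M_\Gamma$), with $b(\ell_1)=1$, $b(\ell_2)=0$; compare with $a'=0$ and $b'(\ell_1)=2$, $b'(\ell_2)=0$. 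Both pairs satisfy $a(e)\le b(e)$ for all $e\in F(\Gamma,\ct)$, yet $\w_\Gamma(\Phi_a t^b)=\w_\Gamma(\Phi_{a'}t^{b'})=(\gamma\mapsto 2\gamma(\ell_1))$. Such collisions are forced by dimension: the special fiber is the $(4g-3)$-dimensional toric variety of $\hat{P}(\Gamma)\subset\Rr^{E(\Gamma)}\times\Rr^{F(\Gamma,\ct)}$, while linear functions on $C_\Gamma$ supply only $3g-3$ independent coordinates, so the values of $\w_\Gamma$ alone cannot separate the adapted basis.

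The gap is localized but real, because this injectivity is precisely what you invoke to conclude that the associated graded algebra is the semigroup algebra on $\hat{P}(\Gamma)\cap(M_\Gamma\times\Zz^{F(\Gamma,\ct)})$, i.e.\ that the leaves are one-dimensional and the special fiber is toric of the stated type. The repair is to use the class-group grading, which you bring in only later for the torus action: $R_g$ is $\Zz^{F(\Gamma,\ct)}$-graded, $\w_\Gamma$ is homogeneous for this grading, and the Rees/associated-graded construction is carried out degreewise. Cancellation between terms with distinct $b$ is impossible since the $t^b$ are independent monomials (so adaptedness itself is fine), and within a fixed degree $b$ the map $a\mapsto\langle\cdot,a\rangle$ is injective by Proposition \ref{prop-adapted}; hence the graded pieces of $\mathrm{gr}_{\w_\Gamma}(R_g)$ are one-dimensional and indexed by exactly the lattice points of $\hat{P}(\Gamma)$, with semigroup multiplication because $C_\Gamma$ is a prime cone, i.e.\ the initial forms satisfy $\mathrm{in}(\Phi_a)\,\mathrm{in}(\Phi_{a'})=\mathrm{in}(\Phi_{a+a'})$. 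With that adjustment, your identification of the special fiber, its normality (the index set is already the full saturated set of lattice points, so no generation or saturation argument is needed), and the fiberwise torus action all go through as in the paper's framework.
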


Observe that Proposition \ref{prop-coxflatfamily} implies that the Cox ring $R_g$ is both finitely generated and Cohen-Macaulay. 

Now consider the character $\chi : (\Cc^*)^{F(\Gamma, \ct)} \to \Cc^*$ given by $\chi((t_\ell)_{\ell\in F(\Gamma, \ct)}) = \prod_{\ell \in F(\Gamma, \ct)} t_\ell$. Our desired compactification is then given by the GIT quotient of this family with respect to $\chi$.
\[
    \underline{\pi_\Gamma} : \overline{X_\Gamma} = \widehat{X}_\Gamma \sslash_\chi (\Cc^*)^{F(\Gamma, \ct)} \to \Cc^{E(\Gamma)}
\]

\begin{proposition}
\label{prop:family is proj}
$\underline{\pi_\Gamma}$ is a projective morphism.

\end{proposition}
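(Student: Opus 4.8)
The plan is to realize $\overline{X_\Gamma}$ as a relative $\operatorname{Proj}$ over $\Cc^{E(\Gamma)}$ and read off projectivity from that description. Since $\widehat{\pi}_\Gamma$ is an affine morphism, write $\widehat{X}_\Gamma = \operatorname{Spec}(\widetilde{R}_g)$, where $\widetilde{R}_g$ is a $\Cc[z_e \mid e \in E(\Gamma)]$-algebra, finitely generated by the remark following Proposition \ref{prop-coxflatfamily}; the fiberwise $(\Cc^*)^{F(\Gamma,\ct)}$-action of Proposition \ref{prop-coxflatfamily} makes $\widetilde{R}_g$ graded by the character lattice $\Zz^{F(\Gamma,\ct)}$, with the base variables $z_e$ sitting in degree $0$ because the action preserves the fibers. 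By the standard description of a torus GIT quotient of an affine scheme linearized by a character,
\[
    \overline{X_\Gamma} \;=\; \widehat{X}_\Gamma \sslash_\chi (\Cc^*)^{F(\Gamma,\ct)} \;=\; \operatorname{Proj}\Big(\, \bigoplus_{n \geq 0} (\widetilde{R}_g)_{n\chi} \,\Big),
\]
where $(\widetilde{R}_g)_{n\chi}$ denotes the graded piece in $\Zz^{F(\Gamma,\ct)}$-degree $n\chi$ and $\chi = (1,\dots,1)$. The graded ring $\bigoplus_{n\geq 0}(\widetilde{R}_g)_{n\chi}$ is finitely generated over its degree-zero part $(\widetilde{R}_g)_0 = (\widetilde{R}_g)^{(\Cc^*)^{F(\Gamma,\ct)}}$, so this relative $\operatorname{Proj}$ is projective over $\operatorname{Spec}\big((\widetilde{R}_g)_0\big)$ with $\mathcal{O}(1)$ relatively ample. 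Hence everything reduces to identifying $(\widetilde{R}_g)_0$ with the base ring $\Cc[z_e \mid e \in E(\Gamma)]$.

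To do this, I would use that the adapted basis $\{\Phi_a t^b\}$ of $\w_\Gamma$ in Proposition \ref{prop-coxflatfamily} is homogeneous for the $\Zz^{F(\Gamma,\ct)}$-grading (the grading records the $t$-monomial degree). Consequently the Rees-type algebra $\widetilde{R}_g$ over $\Cc[z_e]$ attached to $\w_\Gamma$ by the construction of \cite{Kaveh-Manon-NOK}, \cite{Kaveh-Manon-TVB} splits as a direct sum over $\Zz^{F(\Gamma,\ct)}$ whose degree-$d$ summand is the corresponding Rees algebra of $(R_g)_d$; in particular $(\widetilde{R}_g)_0$ is the Rees algebra of $(R_g)_0$. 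Now $R_g$ is the Cox ring of $\mathfrak{X}_g$ (Proposition \ref{prop-cox}), and $\mathfrak{X}_g$ is a projective variety, being a GIT quotient of the projective variety $X^g$; therefore $(R_g)_0 = H^0(\mathfrak{X}_g, \mathcal{O}_{\mathfrak{X}_g}) = \Cc$, spanned by the single basis element $\Phi_0$, which is the only $\Phi_a$ with $a \leq 0$ on $F(\Gamma,\ct)$ since the triangle inequalities force every element of $P(\Gamma)$ to be nonnegative. The Rees algebra over $\Cc[z_e]$ of $\Cc$ with its (trivial) filtration is just $\Cc[z_e \mid e \in E(\Gamma)]$ — indeed the toric flat family degenerating a point is the identity $\Cc^{E(\Gamma)} \to \Cc^{E(\Gamma)}$. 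This gives $(\widetilde{R}_g)_0 = \Cc[z_e \mid e \in E(\Gamma)]$, so $\overline{X_\Gamma} = \operatorname{Proj}\big(\bigoplus_{n\geq0}(\widetilde{R}_g)_{n\chi}\big)$ is projective over $\Cc^{E(\Gamma)}$, as claimed.

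The step I expect to require the most care is making the middle paragraph precise: verifying against the constructions of \cite{Kaveh-Manon-NOK} and \cite{Kaveh-Manon-TVB} that $\widetilde{R}_g$ genuinely decomposes along the $\Zz^{F(\Gamma,\ct)}$-grading with degree-zero summand the (trivial) Rees algebra of $(R_g)_0$. This rests only on the homogeneity of the adapted basis recorded in Proposition \ref{prop-coxflatfamily}, but it should be spelled out. A secondary, routine point is pinning down conventions in the identification $\widehat{X}_\Gamma \sslash_\chi (\Cc^*)^{F(\Gamma,\ct)} = \operatorname{Proj}\bigoplus_{n\geq 0}(\widetilde{R}_g)_{n\chi}$ together with the standard fact that such a $\operatorname{Proj}$ is projective over the affine quotient. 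Should one prefer to avoid the structural claim about $\widetilde{R}_g$, the equality $(\widetilde{R}_g)_0 = \Cc[z_e]$ can instead be obtained from flatness of $(\widetilde{R}_g)_0$ over $\Cc[z_e]$ (it is a graded direct summand of the flat algebra $\widetilde{R}_g$) together with a direct computation that the degree-zero part of the toric special fiber attached to the cone $\widehat{P}(\Gamma)$ is $\Cc$: a lattice point $(a,0)\in \widehat{P}(\Gamma)$ has $a \in P(\Gamma)$ nonnegative with $a \leq 0$ on $F(\Gamma,\ct)$, and walking outward through the spanning tree $\ct$ — every leaf of $\ct$ is a vertex at which the triangle inequalities force the incident tree edge to vanish — shows $a \equiv 0$.
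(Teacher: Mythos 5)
Your argument is correct, and it shares its starting point with the paper --- both realize $\overline{X_\Gamma}$ as a relative $\Proj$ over $\Cc^{E(\Gamma)}$ of the $\chi$-semi-invariant algebra of the Rees-type family algebra (the paper's $S_\chi=\bigoplus_{N\geq 0}\mathbf{F}_N$ is exactly your $\bigoplus_{n\geq 0}(\widetilde{R}_g)_{n\chi}$) --- but the two proofs finish differently. The paper shows this graded algebra is generated in degree one over $\Cc[s_f\mid f\in E(\Gamma)]$: it specializes to the origin, where the fiber is the semigroup algebra $\Cc[S(\Gamma,\ct)]$, invokes its degree-one generation (i.e.\ the normality statement, Theorem \ref{thm-normality}/Proposition \ref{prop-IDP}), and lifts generation from the special fiber to the whole family via \cite[Proposition 4.6]{Kaveh-Manon-TVB}; this produces an explicit closed embedding into a projective space over $\Cc^{E(\Gamma)}$, and the resulting degree-one presentation is what later feeds the very-ampleness assertions. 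You instead avoid normality altogether: you use only finite generation of the semi-invariant algebra (reductivity/Noetherianity), the general fact that $\Proj$ of a finitely generated graded algebra over its Noetherian degree-zero part is projective (Veronese trick), and the identification $(\widetilde{R}_g)_0=\Cc[z_e\mid e\in E(\Gamma)]$. That identification is the real content of your proof and your verification of it is sound: since every $a\in P(\Gamma)$ is nonnegative, degree zero forces $a\equiv 0$ on $F(\Gamma,\ct)$, and your inward induction from the leaves of $\ct$ (at a leaf vertex the two non-tree incidences vanish, so the triangle inequality kills the tree edge, and one prunes) gives $(R_g)_0=\Cc\Phi_0$, whence the degree-zero part of the multi-Rees algebra is the base polynomial ring. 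The trade-off is worth recording: your route is more elementary and applies verbatim to any trivalent $\Gamma$ with any spanning tree $\ct$, since it never needs $P(\Gamma,\ct)$ to be normal (the paper's degree-one-generation step, as written, leans on Theorem \ref{thm-normality}, which is proved only for trees with loops attached); the paper's route buys more, namely a relatively very ample $\mathcal{O}(1)$ with sections indexed by $(P(\Gamma,\ct)\times\{1\})\cap(M_\Gamma\times\Zz)$, which is what connects the family to the polytope and to the anti-canonical embedding used in the main theorem. The only small points to tighten in your write-up are the ones you already flag: finite generation of $\widetilde{R}_g$ should be drawn from the toric-flat-family construction of Proposition \ref{prop-coxflatfamily} (the remark after it only addresses $R_g$ itself), and the splitting of $\widetilde{R}_g$ along the $\Zz^{F(\Gamma,\ct)}$-grading should be checked against the adapted basis $\{\Phi_a t^b s^c\}$, which your alternative direct computation of the degree-zero part effectively does.
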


\begin{proof}
We let $\Cc[s_f\mid f \in E(\Gamma)]$ denote the coordinate ring of $\Cc^{E(\Gamma)}$. 
The space $\hat{X}_\Gamma$ is the $\Proj$ of a graded algebra $S_\chi = \bigoplus_{N \geq 0} \mathbf{F}_N$, where $\mathbf{F}_N = \{\Phi_a t^b s^c \mid a(e) \leq N, e \in F(\Gamma, \ct), a(f) \leq c(f) f \in E(\Gamma)\}$.  This is a summand of the Rees algebra of $\w_\Gamma$, and so defines an affine toric flat family over $\Cc^{E(\Gamma)}$. In particular, $S_\chi$ is a flat $\Cc[s_f \mid f \in E(\Gamma)]$ algebra. Specializing to the origin of $\Cc^{E(\Gamma)}$ yields the graded affine semigroup algebra $\Cc[S(\Gamma, \ct)]$, which is generated by its degree $1$ component. The conclusion of \cite[Proposition 4.6]{Kaveh-Manon-TVB} now implies that $\mathbf{F}_1$ generates $S_\chi$ as a $\Cc[s_f \mid f\in E(\Gamma)]$ algebra. Accordingly, we can define an embedding of the family $\overline{X_\Gamma}$ into a projective space over $\Cc^{E(\Gamma)}$. 

\end{proof}

\begin{proposition}
\label{prop:family is flat}
$\underline{\pi_\Gamma} : \overline{X_\Gamma} \to \Cc^{E(\Gamma)}$ is a flat family and $\overline{X_\Gamma}$ is Cohen-Macaulay.
\end{proposition}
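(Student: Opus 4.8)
The plan is to deduce both claims from the structure already established for the affine family $\hat{X}_\Gamma$ and the projectivity argument in Proposition \ref{prop:family is proj}. The GIT quotient $\overline{X_\Gamma}$ is $\Proj$ of the graded $\Cc[s_f \mid f \in E(\Gamma)]$-algebra $S_\chi = \bigoplus_{N\geq 0}\mathbf{F}_N$, which (as noted in the proof of Proposition \ref{prop:family is proj}) is a direct summand of the Rees algebra of the piecewise-linear valuation $\w_\Gamma$ and is therefore itself a toric flat family over $\Cc^{E(\Gamma)}$; in particular $S_\chi$ is a flat $\Cc[s_f \mid f\in E(\Gamma)]$-module. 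I would first record that flatness of $\overline{X_\Gamma} \to \Cc^{E(\Gamma)}$ follows from flatness of $S_\chi$ over $\Cc[s_f \mid f \in E(\Gamma)]$ together with the fact that $\Proj$ of a flat graded algebra is flat over the base (the relevant graded pieces $(\mathbf{F}_N)$ are each free, hence flat, and taking $\Proj$ preserves flatness when the algebra is finitely generated in degree $1$ over the base, which was shown in Proposition \ref{prop:family is proj}).

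For the Cohen-Macaulay statement, the strategy is: (1) the special fiber over the origin is $\Proj \Cc[S(\Gamma,\ct)]$, where $S(\Gamma,\ct)$ is the affine semigroup of Definition/Theorem \ref{thm-normality}; by Theorem \ref{thm-normality} this semigroup is normal (generated in degree $1$), so $\Cc[S(\Gamma,\ct)]$ is a normal affine semigroup ring, hence Cohen-Macaulay by Hochster's theorem; therefore the special fiber $\overline{Y_\Gamma} = \Proj\Cc[S(\Gamma,\ct)]$ is Cohen-Macaulay (as $\Proj$ of a Cohen-Macaulay graded ring that is generated in degree one over its degree-zero part, modulo the usual depth bookkeeping at the irrelevant ideal). (2) Since $\underline{\pi_\Gamma}$ is flat with Cohen-Macaulay special fiber over the origin, and $\Cc^{E(\Gamma)}$ is regular (hence Cohen-Macaulay), the total space $\overline{X_\Gamma}$ is Cohen-Macaulay along that fiber by the standard ``a flat family over a Cohen-Macaulay base with Cohen-Macaulay fibers has Cohen-Macaulay total space'' principle (local criterion: $\mathrm{depth}_{\overline{X_\Gamma}} = \mathrm{depth}_{\mathrm{base}} + \mathrm{depth}_{\mathrm{fiber}}$ for flat local homomorphisms). (3) Because the $(\Cc^*)^{E(\Gamma)}$-action (or at least the torus action coming from the toric structure of the family) moves every fiber into the closure of the orbit of the special fiber, Cohen-Macaulay-ness spreads from the special fiber to all of $\overline{X_\Gamma}$; alternatively one can invoke semicontinuity of depth in a flat family to conclude that the locus where $\overline{X_\Gamma}$ is Cohen-Macaulay is open and, being torus-stable and containing the origin fiber, is everything.

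Concretely, the cleanest route is probably: show directly that $S_\chi$ itself is a Cohen-Macaulay ring. It is the Rees-type algebra summand whose ``fiber at the origin'' (setting all $s_f = 0$) is the semigroup algebra $\Cc[S(\Gamma,\ct)]$ of a normal affine semigroup, which is Cohen-Macaulay; since $S_\chi$ is flat over $\Cc[s_f\mid f\in E(\Gamma)]$ with this Cohen-Macaulay fiber, and the base ring is a polynomial ring, $S_\chi$ is Cohen-Macaulay. Then $\overline{X_\Gamma} = \Proj S_\chi$ inherits Cohen-Macaulay-ness on the relevant chart, and flatness of $\underline{\pi_\Gamma}$ is the geometric translation of flatness of $S_\chi$ over $\Cc[s_f \mid f \in E(\Gamma)]$.

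I expect the main obstacle to be the bookkeeping in step (1)–(2): carefully passing between ``$\Cc[S(\Gamma,\ct)]$ is Cohen-Macaulay'' and ``$\overline{Y_\Gamma}=\Proj$ of it is Cohen-Macaulay,'' and then propagating Cohen-Macaulay-ness from the special fiber to the whole family. The semigroup-normality input (Theorem \ref{thm-normality}) plus Hochster's theorem does the real work; everything else is a standard application of the local criterion of flatness and the additivity of depth, but one must make sure the hypotheses (finite generation in degree one, flatness, regular base) line up so that no extra normality or $S_2$ condition is needed on the total space.
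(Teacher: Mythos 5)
Your route is genuinely different from the paper's, and the logical order is reversed. The paper first proves that $\overline{X_\Gamma}$ is Cohen--Macaulay (the special fiber of the affine family $\hat X_\Gamma$ is a saturated affine semigroup algebra, hence Cohen--Macaulay by Hochster, and the passage to the quotient $\Proj(\bigoplus_N \mathbf{F}_N)$ is handled by Hochster--Roberts), and only then deduces flatness by Hironaka's ``miracle flatness'' criterion: Cohen--Macaulay total space, smooth base, and fibers all of dimension $3g-3$ (the equidimensionality being quoted from Kaveh--Manon). You instead get flatness directly from the flatness of $S_\chi$ over $\Cc[s_f \mid f \in E(\Gamma)]$ established in Proposition \ref{prop:family is proj} (each $\mathbf{F}_N$, and likewise each chart ring $(S_\chi[x^{-1}])_0$, is a graded direct summand of a flat module, hence flat), and then argue Cohen--Macaulayness from the fibers: flat morphism, regular base, Cohen--Macaulay special fiber, then spread out. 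Your flatness argument is arguably more self-contained (no equidimensionality input needed); the paper's order avoids your fiber-to-total-space spreading step.

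Two points in your Cohen--Macaulay half need tightening. First, the input you cite for the special fiber is not the right one: Theorem \ref{thm-normality} (degree-one generation) holds only for the tree-with-loops graphs, whereas Proposition \ref{prop:family is flat} is asserted for an arbitrary trivalent $\Gamma$ with spanning tree $\ct$; moreover degree-one generation is not what Cohen--Macaulayness needs. What you should use is that the relevant semigroups are \emph{saturated} by construction (intersections of rational cones with the lattice), so the special fiber is a normal toric variety and Hochster's theorem applies locally on its charts --- this covers all $\Gamma,\ct$ and makes the ``Proj of a Cohen--Macaulay ring'' bookkeeping unnecessary. Second, the spreading step is asserted rather than proved: an open, torus-stable locus containing the zero fiber need not a priori be everything. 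You need the toric-flat-family structure, namely a torus action on $\overline{X_\Gamma}$ for which $\underline{\pi_\Gamma}$ is equivariant over the standard scaling action on $\Cc^{E(\Gamma)}$, together with properness from Proposition \ref{prop:family is proj}: then the non-Cohen--Macaulay locus is closed and torus-stable, its image in $\Cc^{E(\Gamma)}$ is closed and torus-stable, hence contains the origin if nonempty, contradicting Cohen--Macaulayness along the special fiber (which you get from the local criterion ${\rm depth} = {\rm depth}_{\rm base} + {\rm depth}_{\rm fiber}$). The same caveat applies to your ``cleanest route'' through $S_\chi$: flatness plus a Cohen--Macaulay fiber at the origin only gives Cohen--Macaulayness at points over the origin, so the equivariant spreading (or an equivalent argument) is still required there. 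With these repairs your proof is correct, but it is not the paper's argument.
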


\begin{proof}
First, we show that $\overline{X_\Gamma}$ is Cohen-Macualay. The special fiber of the family $\hat{X}_\Gamma$ is a saturated affine semigroup algebra, so it is Cohen-Macaulay.  It follows by the Hochster-Roberts theorem that $\Proj(\bigoplus_{N\geq 0} \mathbf{F}_N)$ is Cohen-Macaulay as well. We also know that the fibers of $\underline{\pi_\Gamma}$ are all of dimension $3g-3 = |E(\Gamma)|$ by \cite[Lemma 6.8]{Kaveh-Manon-TVB}, and we know the base $\Cc^{E(\Gamma)}$ is smooth. By Hironaka's criterion, we see that the family is flat.
\end{proof}

\begin{remark}
The family $\overline{X_\Gamma}$, the compactification $\mathfrak{X}_g$, and the polytope $P_\Gamma$ are constructed by considering the spin diagrams $\Phi_a$ with graph $\Gamma$ which satisfy $\max_{e \in F(\Gamma, \ct)}\{a(e)\} \leq 1$. The compactification considered in \cite{Whang} corresponds to the condition $\sum_{e \in F(\Gamma, \ct)} a(e) \leq 1$.  Accordingly, the toric degeneration techniques used in this section can be used on the compactification studied in \cite{Whang} as well.  It would be interesting to compute the corresponding polytope in that case. 
\end{remark}

\subsection{The Special Fiber}

In this section, we study the special fiber of $\underline{\pi_\Gamma} : \overline{X_\Gamma} \to \Cc^{E(\Gamma)}$. In particular, we show that the anti-canonical divisor of $\overline{Y_\Gamma}$ corresponds to the polytope $Q(\Gamma,\ct)$; thus, $\overline{Y_\Gamma}$ is Gorenstein-Fano when $\Gamma$ is a tree $\ct$ with loops attached to the edges. Let $M_\Gamma$ and $P(\Gamma)$ be as in the previous section, and let $P^\vee(\Gamma)$ be the dual cone of $P(\Gamma)$.

\begin{lemma}
$P^\vee(\Gamma)$ is a full-dimensional strongly convex rational polyhedral cone in $N_\Gamma \otimes \Rr$.
\end{lemma}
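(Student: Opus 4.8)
The plan is to verify the two claims — full-dimensionality and strong convexity — for the dual cone $P^\vee(\Gamma)$ directly from the description of $P(\Gamma)$ as a cone cut out by the triangle inequalities. Recall that $P(\Gamma) \subset \Rr^{E(\Gamma)}$ is defined by finitely many linear inequalities, so it is a rational polyhedral cone; its dual $P^\vee(\Gamma)$ is then automatically rational polyhedral as well, and everything is measured against the lattice $M_\Gamma$ (equivalently against the dual lattice $N_\Gamma$, which has finite index data recorded in Lemma~\ref{lem-lattice} in the relevant cases). So only two things need to be checked, and by the standard duality dictionary for cones they are in fact equivalent to statements about $P(\Gamma)$ itself: $P^\vee(\Gamma)$ is full-dimensional if and only if $P(\Gamma)$ is strongly convex (contains no line), and $P^\vee(\Gamma)$ is strongly convex if and only if $P(\Gamma)$ is full-dimensional.

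First I would show $P(\Gamma)$ is full-dimensional in $\Rr^{E(\Gamma)}$, which gives strong convexity of the dual. This is essentially the argument already run in the proof of Lemma~\ref{lemma-isPolytope}: the point $p = (2/3,\dots,2/3)$, or more crudely the all-ones vector, satisfies every triangle inequality strictly, since $1 + 1 > 1$ at each vertex. Hence $P(\Gamma)$ contains a small ball around a scalar multiple of the all-ones vector, so $P(\Gamma)$ spans $\Rr^{E(\Gamma)}$ and therefore $P^\vee(\Gamma)$ is strongly convex (it contains no line, because a line in $P^\vee(\Gamma)$ would force $P(\Gamma)$ to lie in a hyperplane).

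Second I would show $P(\Gamma)$ is strongly convex, i.e. contains no nonzero linear subspace, which gives full-dimensionality of the dual. Suppose $a$ and $-a$ both lie in $P(\Gamma)$. Applying the triangle inequalities to $\pm a$ at a vertex with edges $e,f,g$ forces $a(e) + a(f) = a(g)$ and $a(e) + a(g) = a(f)$ (each inequality together with its negation becomes an equality), hence $a(e) = 0$ for one of the three, and in fact — running over all three cyclic choices — $a(e) = a(f) = a(g) = 0$ at that vertex. Since $\Gamma$ is connected and trivalent, propagating this through the graph forces $a \equiv 0$ on all of $E(\Gamma)$. Therefore $P(\Gamma)$ contains no line, so $P^\vee(\Gamma)$ is full-dimensional in $N_\Gamma \otimes \Rr$.

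The content here is entirely routine — the only mildly delicate point is keeping the duality dictionary straight (full-dimensional $\leftrightarrow$ strongly convex under polar duality of cones), so I would state that correspondence explicitly and then invoke the two combinatorial observations above. I do not expect any genuine obstacle; the lemma is a bookkeeping step setting up the toric geometry of $Y_\Gamma = \Spec \Cc[P^\vee(\Gamma) \cap N_\Gamma]$ and its projective compactification $\overline{Y_\Gamma}$ that follows.
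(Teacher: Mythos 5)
Your argument is correct, and it is worth noting that the paper itself states this lemma with no proof at all, treating it as routine bookkeeping; your write-up simply supplies the standard justification. The duality dictionary is applied correctly: $P(\Gamma)$ is full-dimensional because the all-ones vector satisfies every triangle inequality strictly (the same observation as in the proof of Lemma~\ref{lemma-isPolytope}), giving strong convexity of $P^\vee(\Gamma)$; and $P(\Gamma)$ is strongly convex because $a,-a\in P(\Gamma)$ forces all three triangle inequalities at each vertex to be equalities, whence $a$ vanishes on every edge incident to that vertex --- in fact no propagation through the graph is needed, since every edge of $\Gamma$ meets some vertex. Two small points to tidy: at a vertex carrying a loop $\ell$ with third edge $e$ the triangle inequalities degenerate to $0\leq a(e)\leq 2a(\ell)$, and the same argument still yields $a(e)=a(\ell)=0$, but you should state this case rather than quietly assuming three distinct edges; and your closing aside misstates the toric dictionary --- in the paper $Y_\Gamma=\Spec\Cc[P(\Gamma)\cap M_\Gamma]$, so $P^\vee(\Gamma)\subset N_\Gamma\otimes\Rr$ is the cone of the fan, not the cone whose lattice points give the coordinate ring. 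Neither issue affects the validity of the lemma or of your proof.
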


The valuation $\w_\Gamma$ has associated graded
\[
    \mathrm{gr}_{\w_\Gamma} (R_g) = \bigoplus_{(a,b)\in \hat{S}_\Gamma} \Cc t^{(a,b)}
\]
where $\hat{S}_\Gamma = \{(a,b) \in (P(\Gamma) \cap M_\Gamma) \times \Zz^{F(\Gamma, \ct)} ~|~ a(e) \leq b(e) \text{ for all }e \in F(\Gamma, \ct)\} = \hat{P}_\Gamma \cap M_\Gamma \times \Zz^{F(\Gamma, \ct)}$. We can describe the rays of the dual cone $\hat{P}^\vee_\Gamma \subset (N_\Gamma \times \Zz^{F(\Gamma, \ct)})_\Rr$. Let \(d_1, \dotsc, d_g\) be the standard basis vectors of \(\Rr^{F(\Gamma, \ct)}\) and let $(b_f)_{f\in E(\Gamma)}$ be the standard basis vectors of \(\Rr^{E(\Gamma)}\). Then, we have that 
\[
\hat{P}^\vee_\Gamma(1) = \{\Rr_{\geq 0}(d_\ell - b_\ell) \st \ell \in \cl \} \cup \{\Rr_{\geq 0}(0, v) \st v\in P^\vee_\Gamma(1)\}.
\]

Set $U_\sigma = \Spec(\Cc[\hat{S}_\Gamma])$ be the corresponding affine toric variety. This is the special fiber of the family $\widehat{X}_\Gamma \to \Cc^{E(\Gamma)}$, and the special fiber of $\overline{X_\Gamma} \to \Cc^{E(\Gamma)}$ is the GIT quotient of $U_\sigma$ by $(\Cc^*)^{F(\Gamma, \ct)}$ with respect to the character $\chi$ from the previous section.

\begin{theorem}
\label{thm-special fiber is GF}
The special fiber of $\overline{X_\Gamma} \to \Cc^{E(\Gamma)}$, denoted $\overline{Y_\Gamma}$, is a normal projective toric variety. The polytope associated to the anti-canonical divisor on $\overline{Y_\Gamma}$ is $Q(\Gamma,\ct)$; hence, $\overline{Y_\Gamma}$ is Gorenstein Fano when $\Gamma$ is a tree $\ct$ with loops attached to each leaf.
\end{theorem}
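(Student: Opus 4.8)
The plan is to establish Theorem \ref{thm-special fiber is GF} in three stages: first identify $\overline{Y_\Gamma}$ as a GIT quotient toric variety, then compute its polytope, and finally match that polytope to $Q(\Gamma,\ct)$.

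\textbf{Step 1: Normality and the polytope of $\overline{Y_\Gamma}$.} By Proposition \ref{prop-coxflatfamily}, the special fiber $U_\sigma = \Spec(\Cc[\hat{S}_\Gamma])$ of $\hat{X}_\Gamma$ is the affine toric variety of the saturated cone $\hat{P}_\Gamma$ with respect to the lattice $M_\Gamma \times \Zz^{F(\Gamma,\ct)}$; it is normal because $\hat{S}_\Gamma = \hat{P}_\Gamma \cap (M_\Gamma \times \Zz^{F(\Gamma,\ct)})$ is by definition saturated. Taking the GIT quotient by $(\Cc^*)^{F(\Gamma,\ct)}$ with respect to the character $\chi$ passes to the subalgebra $\bigoplus_N \{\Phi_a t^b \mid a(e)\le N\ \forall e\in F(\Gamma,\ct),\ \sum_\ell b(\ell) = N\}$ of $\Cc[\hat S_\Gamma]$, graded by $N$. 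By Proposition \ref{prop:family is proj} (specialized to the origin, using the same argument that the special fiber is generated in degree $1$ and hence is the semigroup algebra $\Cc[S(\Gamma,\ct)]$ of the cone over $P(\Gamma,\ct)\times\{1\}$), $\overline{Y_\Gamma} = \Proj\Cc[S(\Gamma,\ct)]$ is the projective toric variety attached to the polytope $P(\Gamma,\ct)$ with respect to $M_\Gamma$. Its normality follows from Theorem \ref{thm-normality}, which says $S(\Gamma,\ct)$ is generated by its lattice points in degree $1$ (hence the projective coordinate ring is normal). This handles the first sentence.

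\textbf{Step 2: Computing the anti-canonical polytope.} For a projective toric variety $X_P$ coming from a lattice polytope $P$ with facet presentation $P = \{v \mid \langle v, u_i\rangle \ge -a_i\}$ with $u_i \in N$ primitive inward normals, the toric boundary divisor $\sum_i D_i$ is anti-canonical, and the polytope of $-K_{X_P}$ is $\{v \mid \langle v, u_i\rangle \ge -(a_i+1)\}$. So I would write $P(\Gamma,\ct)$ (or rather a suitable dilate/translate of it, since $P(\Gamma,\ct)$ itself has the origin as a vertex) in facet form with $N_\Gamma$-primitive normals and read off the anti-canonical polytope. The facets of $3P(\Gamma,\ct)$ come from the triangle inequalities $\tfrac{w(e)+w(f)-w(g)}{2}\ge 0$ (and permutations) at each vertex, and the boundary inequalities $-w(\ell)\ge -3$ for $\ell\in F(\Gamma,\ct)$; by the Remark following the definition of $Q(\Gamma,\ct)$, the vectors $\tfrac12(b_e + b_f - b_g)$ etc.\ and $-b_\ell$ are precisely the $N_\Gamma$-primitive inward normals (this is exactly why the halves were inserted). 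Relaxing each inequality by $1$ turns $\tfrac{w(e)+w(f)-w(g)}{2}\ge 0$ into $\tfrac{w(e)+w(f)-w(g)}{2}\ge -1$ and $-w(\ell)\ge -3$ into $-w(\ell)\ge -4$; comparing with the definition of $Q(\Gamma,\ct)$ and the Lemma identifying $Q(\Gamma,\ct)+\mathbf 2 = 3P(\Gamma,\ct)$, one sees after translating by $-\mathbf 2$ that the anti-canonical polytope is exactly $Q(\Gamma,\ct)$. (One should check that $-K$ is Cartier here, equivalently that $\overline{Y_\Gamma}$ is Gorenstein; this is where one uses that all relevant lattice points land in $M_\Gamma$ — but for the statement "the polytope associated to the anti-canonical divisor is $Q(\Gamma,\ct)$" it suffices that $-K$ is a well-defined $\Qq$-Cartier divisor with this polytope as its section polytope.)

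\textbf{Step 3: Gorenstein-Fano conclusion.} When $\Gamma$ is a trivalent tree $\ct$ with a loop at each leaf, Theorem \ref{thm:reflexive} says $Q(\Gamma,\ct)$ is reflexive — in particular an integral polytope with respect to $M_\Gamma$ whose polar dual is also integral with respect to $N_\Gamma$. Reflexivity of the anti-canonical polytope is equivalent to $\overline{Y_\Gamma}$ being Gorenstein Fano: integrality of $Q(\Gamma,\ct)$ gives that $-K$ is Cartier (Gorenstein), and integrality of the dual gives that $-K$ is ample with the interior lattice point $0$ as the unique one, i.e.\ Fano. So the second sentence follows by combining Step 2 with Theorems \ref{thm:reflexive} and \ref{thm-normality}.

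\textbf{Main obstacle.} The delicate point is Step 2: carefully checking that the inward facet normals of $3P(\Gamma,\ct)$ really are $N_\Gamma$-primitive (so that the "relax each inequality by $1$" recipe for the anti-canonical polytope applies verbatim) and that no facet is redundant or merged. The Remark after the definition of $Q(\Gamma,\ct)$ asserts exactly this primitivity, and Lemma \ref{lem-lattice} pins down $N_\Gamma$ explicitly for the tree-with-loops case, so the argument should go through — but one must be careful that every triangle-inequality facet is genuinely a facet of $3P(\Gamma,\ct)$ (not cut off by others) and handle the mild degeneration where a leaf-loop component collapses to $[0,1]$, as already flagged in Lemma \ref{lem: faces}.
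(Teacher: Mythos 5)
Your overall route is the paper's: identify the rays of the fan of $\overline{Y_\Gamma}$ with the $N_\Gamma$-primitive inward normals of the triangle and boundary inequalities, note that $-K_{\overline{Y_\Gamma}}$ is the sum of the corresponding torus-invariant prime divisors, identify its polytope with $Q(\Gamma,\ct)$, and then invoke Theorem \ref{thm:reflexive}. But the central formula in your Step 2 is wrong, and the bookkeeping built on it does not close. For a torus-invariant divisor $D=\sum_i a_iD_i$ on a toric variety with primitive ray generators $u_i$, the associated polytope is $\{v \mid \langle v,u_i\rangle \ge -a_i\}$; hence the polytope of $-K=\sum_i D_i$ is $\{v \mid \langle v,u_i\rangle \ge -1\}$, independent of the chosen polarization. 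Your recipe $\{v \mid \langle v,u_i\rangle \ge -(a_i+1)\}$ is the polytope of $D_P-K$ (polarization plus anticanonical), not of $-K$. The attempted comparison then fails arithmetically: relaxing the inequalities of $3P(\Gamma,\ct)$ by $1$ gives the triangle inequalities $\ge -1$ together with $-w(\ell)\ge -4$, and translating by $-\mathbf{2}$ produces the polytope cut out by triangle inequalities $\ge -2$ and $-w(\ell)\ge -2$, which is $2\,Q(\Gamma,\ct)$, not $Q(\Gamma,\ct)$. The repair is immediate and is exactly what the paper does: with the correct formula, the anticanonical polytope is cut out by the normals $\tfrac{1}{2}(b_e+b_f-b_h)$ (three per vertex) and $-b_\ell$ with all constants equal to $-1$, which is verbatim the definition of $Q(\Gamma,\ct)$; no dilation, relaxation, or translation enters.

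Two smaller points. First, you cite Theorem \ref{thm-normality} for normality of $\overline{Y_\Gamma}$, but that theorem is only proved for the tree-with-loops graphs, while the normality assertion in the statement is for general $\Gamma,\ct$; normality needs no such input, since $S(\Gamma,\ct)$ is by definition the intersection of the cone over $P(\Gamma,\ct)\times\{1\}$ with the lattice, so $\Cc[S(\Gamma,\ct)]$ is a saturated affine semigroup algebra and its $\Proj$ is normal (Theorem \ref{thm-normality} is what gives the very ampleness/projective normality statements used elsewhere, not plain normality). Second, the "main obstacle" you flag — that each triangle and boundary inequality is genuinely a facet and that the listed normals are $N_\Gamma$-primitive — is real but is also left implicit in the paper (the Remark after the definition of $Q(\Gamma,\ct)$ and Lemma \ref{lem-lattice} are the relevant inputs), so it is not a defect peculiar to your argument.
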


\begin{proof}
Let $\Sigma_{\Gamma,\ct}$ be the defining fan of the toric variety $\overline{Y_\Gamma}$. Let $(b_e)_{e\in E(\Gamma)}$ be the standard basis for $\Rr^{E(\Gamma)}$. For each vertex $v \in V(\Gamma)$, there are three triangle inequalities, and these correspond to three torus-invariant divisors $D^{++-}_v,D^{+-+}_v,$ and $D^{-++}_v$.  These divisors correspond to the ray generators
\[
    \frac{b_e + b_f - b_h}{2}, \frac{b_e - b_f + b_h}{2}, \frac{-b_e + b_f + b_h}{2} \in N_\Gamma
\]
of $\Sigma_{\Gamma,\ct}$ where $e,f,h$ meet at $v$. Moreover, for each edge $\ell \in F(\Gamma,\ct)$, we have the torus-invariant divisor $D_\ell$ corresponding to the ray generator $-b_\ell \in N_\Gamma$. The anti-canonical divisor on $\overline{Y_\Gamma}$ is the sum these torus-invariant prime divisors:
\[
    -K_{\overline{Y_\Gamma}} = \sum_{\ell\in F(\Gamma,\ct)} D_\ell + \sum_{v\in V(\Gamma)} \left(D^{++-}_v + D^{+-+}_v + D^{-++}_v\right).
\]
The polytope corresponding to $-K_{\overline{Y_\Gamma}}$ is $Q(\Gamma,\ct)$.
\end{proof}

\subsection{Transferring the Fano Property}

Our last result pertains to transferring the Fano property from the special fiber $\overline{Y_\Gamma}$ to $\mathfrak{X}_g$. Recall from Propositions \ref{prop:family is flat} and \ref{prop:family is proj} that the morphism $\underline{\pi_\Gamma} : \overline{X_\Gamma} \to \Cc^{E(\Gamma)}$ is flat and projective, and $\overline{X_\Gamma}$ is Cohen-Macaulay. The fact that the family is Cohen-Macaulay ensures that the canonical sheaves of the fibers are specializations of the relative canonical sheaf $\omega_{\overline{X_\Gamma}/\Cc^{E(\Gamma)}}$ by \cite[Theorem 3.6.1]{Conrad}. Flatness provides us with the fact that $\mathfrak{X}_g$ is Gorenstein, as the special fiber is Gorenstein. Finally, as the family is projective (hence proper) and the anti-canonical line bundle on the special fiber is ample, there is an open neighborhood $U$ of $0\in \Cc^{E(\Gamma)}$ where $\omega_{\overline{X_\Gamma}_t}^{-1}$ is ample for all $t \in U$ by \cite[Theorem 1.2.17]{LazarsfeldI}. In particular, we can take $t \in (\Cc^*)^{E(\Gamma)}$ and see that $\mathfrak{X}_g$ is Fano. This proves our main result, Theorem \ref{thm-main}, from the introduction.

\begin{theorem}
\label{thm:compactification is GF}
For $t \in (\Cc^*)^{E(\Gamma)}$, $\underline{\pi_\Gamma}^{-1}(t) \simeq \mathfrak{X}_g$ is a Gorenstein Fano projective variety.
\end{theorem}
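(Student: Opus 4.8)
\textbf{Proof plan for Theorem \ref{thm:compactification is GF}.}

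The strategy is to run a standard semicontinuity argument on the flat projective family $\underline{\pi_\Gamma} : \overline{X_\Gamma} \to \Cc^{E(\Gamma)}$, using the special fiber $\overline{Y_\Gamma}$ as an anchor. The three properties we must transfer to a general fiber $\underline{\pi_\Gamma}^{-1}(t)$, $t \in (\Cc^*)^{E(\Gamma)}$, are: (i) the fiber is isomorphic to $\mathfrak{X}_g$; (ii) the fiber is Gorenstein; and (iii) the anti-canonical bundle of the fiber is ample. Point (i) is essentially built into the construction: the general fiber of $\hat{X}_\Gamma \to \Cc^{E(\Gamma)}$ is $\Spec(R_g) = \Spec(Cox(\mathfrak{X}_g))$ by Propositions \ref{prop-cox} and \ref{prop-coxflatfamily}, and the GIT quotient by $(\Cc^*)^{F(\Gamma,\ct)}$ with respect to $\chi$ recovers $\mathfrak{X}_g$ fiberwise over $(\Cc^*)^{E(\Gamma)}$, just as in the definition of $\mathfrak{X}_g$ as a GIT quotient in Section \ref{sec-compactification}.

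For (ii) and (iii) the key input is that the family is flat (Proposition \ref{prop:family is flat}), that $\overline{X_\Gamma}$ is Cohen--Macaulay (same proposition, via Hochster--Roberts applied to the saturated semigroup algebra $\Cc[S(\Gamma,\ct)]$ which is the special fiber of $\hat{X}_\Gamma$), and that the family is projective (Proposition \ref{prop:family is proj}). First I would invoke \cite[Theorem 3.6.1]{Conrad}: for a flat projective Cohen--Macaulay family over a smooth base, the relative dualizing sheaf $\omega_{\overline{X_\Gamma}/\Cc^{E(\Gamma)}}$ is a line bundle whose restriction to each fiber is the dualizing sheaf of that fiber. Since the special fiber $\overline{Y_\Gamma}$ is Gorenstein Fano by Theorem \ref{thm-special fiber is GF} (its dualizing sheaf is the line bundle associated to the reflexive polytope $Q(\Gamma,\ct)$, using Theorem \ref{thm:reflexive} for the graphs in question), the relative dualizing sheaf is a line bundle in a neighborhood of $0$, so every nearby fiber is Gorenstein; in particular $\mathfrak{X}_g \cong \underline{\pi_\Gamma}^{-1}(t)$ is Gorenstein. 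Then, with $\omega_{\overline{X_\Gamma}/\Cc^{E(\Gamma)}}^{-1}$ a genuine line bundle on the total space which restricts to the ample anti-canonical bundle of $\overline{Y_\Gamma}$ on the special fiber, I would apply the openness of ampleness in proper flat families, \cite[Theorem 1.2.17]{LazarsfeldI}, to produce an open neighborhood $U \ni 0$ in $\Cc^{E(\Gamma)}$ over which $\omega_{\overline{X_\Gamma}_t}^{-1}$ is ample for all $t \in U$.

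Finally I would choose $t \in U \cap (\Cc^*)^{E(\Gamma)}$, which is nonempty since $U$ is open and $(\Cc^*)^{E(\Gamma)}$ is dense; by (i) the corresponding fiber is $\mathfrak{X}_g$, and by the previous paragraph it is Gorenstein with ample anti-canonical bundle, hence Fano. Together with Theorem \ref{thm-special fiber is GF} this recovers the anti-canonical class computation of Theorem \ref{thm-main}. The main obstacle is bookkeeping rather than conceptual: one must be careful that the relative dualizing sheaf, computed on the total space $\overline{X_\Gamma}$ (which has quotient singularities coming from the GIT construction), really is a line bundle and really does commute with base change to each fiber --- this is exactly where Cohen--Macaulayness of the total space and flatness of the family are essential, and where \cite[Theorem 3.6.1]{Conrad} does the work. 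A secondary point to verify is that the anti-canonical polytope $Q(\Gamma,\ct)$ of the special fiber is honestly reflexive (equivalently, that $\overline{Y_\Gamma}$ is Fano and not merely Gorenstein with nef anti-canonical class), but this is precisely the content of Theorem \ref{thm:reflexive}, valid for the pairs $\Gamma,\ct$ where $\Gamma$ is a trivalent tree with a loop at each leaf.
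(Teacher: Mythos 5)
Your proposal is correct and follows essentially the same route as the paper: flatness, projectivity, and Cohen--Macaulayness of $\overline{X_\Gamma}$ combined with \cite[Theorem 3.6.1]{Conrad} to specialize the relative canonical sheaf, the Gorenstein Fano special fiber from Theorem \ref{thm-special fiber is GF}, openness of ampleness via \cite[Theorem 1.2.17]{LazarsfeldI}, and then choosing $t$ in the dense torus. The extra bookkeeping you flag (identification of the general fiber with $\mathfrak{X}_g$ and reflexivity of $Q(\Gamma,\ct)$) matches what the paper establishes in Propositions \ref{prop-cox}--\ref{prop-coxflatfamily} and Theorem \ref{thm:reflexive}.
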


\nocite{*}
\bibliographystyle{alpha}
\bibliography{Fano}

\end{document}